\numberwithin{equation}{section}
\newtheorem{theorem}{Theorem}[section]
\newtheorem{proposition}[theorem]{Proposition}
\newtheorem{lemma}[theorem]{Lemma}
\newtheorem{corollary}[theorem]{Corollary}
\theoremstyle{remark}
\newtheorem{remark}[theorem]{Remark}
\newcounter{FNC}[page]
\def\fauxfootnote#1{{\addtocounter{FNC}{2}\Magenta{$^\fnsymbol{FNC}$}%
     \let\thefootnote\relax\footnotetext{\Magenta{$^\fnsymbol{FNC}$#1}}}}
\newcommand{\defcolor}[1]{{\color{blue}#1}}
\newcommand{\demph}[1]{\defcolor{{\sl #1}}}
\newcommand{\GL}{\mbox{\rm GL}}
\newcommand{\Gr}{\mbox{\rm Gr}}
\newcommand{\Fl}{\mbox{\rm Fl}(1,5;V)}
\newcommand{\Wr}{\mbox{\rm Wr}}
\newcommand{\calS}{\mathcal S}
\newcommand{\bwedge}{{\textstyle \bigwedge}}
\newcommand{\CC}{{\mathbb C}}
\newcommand{\PP}{{\mathbb P}}
\newcommand{\RR}{{\mathbb R}}
\DeclareMathOperator{\Mat}{{\rm Mat}}
\title[Nontrivial linear projections on the Grassmannian $\Gr_3(\CC^6)$]{Nontrivial linear projections\\
  on the Grassmannian $\Gr_3(\CC^6)$}
\author{Yanhe Huang}
\address{Yanhe Huang \\
         Department of Mathematics\\
         University of California\\ Berkeley\\
         970 Evans Hall\\
         Berkeley, CA 94720-3840
         USA}
\email{yanhe\_huang@berkeley.edu}
\author{George Petroulakis}
\address{George Petroulakis \\
	Athens, Greece}
\email{Georgios.Petroulakis.1@city.ac.uk}
\author{Frank Sottile}
\address{Frank Sottile \\
         Department of Mathematics\\
         Texas A\&M University\\
         College Station\\
         Texas \ 77843\\
         USA}
\email{sottile@math.tamu.edu}
\urladdr{\url{http://www.math.tamu.edu/~sottile}}
\author{Igor Zelenko}
\address{Igor Zelenko\\
         Department of Mathematics\\
         Texas A\&M University\\
         College Station\\
         Texas \ 77843\\
         USA}
\email{zelenko@math.tamu.edu}
\urladdr{\url{http://www.math.tamu.edu/~zelenko}}
\thanks{Sottile was supported in part by Simons Foundation Collaboration Grant for Mathematicians 636314.}
\thanks{Zelenko was partly supported by NSF grant DMS-1406193 and Simons Foundation Collaboration Grant for Mathematicians
  524213.} 
\keywords{Wronski map, Pl\"{u}cker embedding, 3-forms in $\CC^6$, self-adjoint linear ordinary differential operators,
  symmetric linear control systems, pole placement map} 
\subjclass[2010]{14M15, 34A30, 93B55}
\begin{document}

\begin{abstract}
A typical linear projection of the Grassmannian in its Pl\"ucker embedding is injective, unless its image
is a projective space.
A notable exception are self-adjoint linear projections, which have even degree.
We consider linear projections of $\Gr_3\CC^6$ with low-dimensional centers of projection.
When the center has dimension less than five, we show that the projection has degree 1.
When the center has dimension five and the projection has degree greater than 1, we show that it is self-adjoint.
\end{abstract}
\maketitle

\section{Introduction}
Consider a linear  ordinary differential operator (ODO) of order $n$
 \begin{equation}
 \label{ode}
   L\,x(t)\  =\   x^{(n)}(t)+a_{n-1}(t)x^{(n-1)}(t)+\cdots+a_{0}(t)x(t)\,,
 \end{equation}
where $a_0,\dotsc,a_{n-1}$ are complex-valued  continuous functions on an interval $I\subset\RR$.
Let \defcolor{$V_L$} be the space of complex-valued solutions of the homogeneous 
equation $Lx=0$.

The \demph{Wronskian} of $m$ smooth functions $f_1(t),\dotsc, f_m(t)$ on $I$ is the determinant 
\[
   \defcolor{\Wr}\bigl(f_1(t),f_2(t), \dotsc, f_m(t)\bigr)\ :=\
   \det \begin{pmatrix}f_1(t)&f_2(t)&\dotsb& f_m(t)\\
                      f_1'(t)&f'_2(t)&\dotsb &f_m'(t)\\
                      \vdots &\vdots&\ddots&\vdots\\
                     f_1^{(m-1)}(t)&f_2^{(m-1)}(t)&\dotsb&f_m^{(m-1)}(t)\end{pmatrix}\,.
\]
The Wronskian $\Wr\bigl(f_1(t),\ldots, f_m(t)\bigr)$ is not identically zero when $f_1(t),\dotsc,f_m(t)$ form a basis 
of an $m$-dimensional subspace $\Lambda$ in $V_L$.
If  $g_1(t),\dotsc,g_m(t)$ is another basis, then
\[
  \Wr\bigl(g_1(t),g_2(t), \ldots, g_m(t)\bigr)\ =\ 
  c\Wr\bigl(f_1(t),f_2(t), \ldots, f_m(t)\bigr)\,,
\]
where $c$ is the determinant of the transition matrix between the bases.
Therefore, the one-dimensional linear subspace of $C^\infty(I)$ spanned by the Wronskian
$\Wr\bigl(f_1(t),\ldots,f_m(t)\bigr)$ depends only upon $\Lambda$.
This element of the projective space $\PP C^\infty(I)$ is called the \demph{Wronskian of the subspace $\Lambda$}. 
This defines  the \demph{Wronski map $\Wr_{L,m}$} from the Grassmannian \defcolor{$\Gr_mV_L$} of $m$-dimensional
subspaces of $V_L$ to $\PP C^\infty(I)$. 

For  complex algebraic varieties $X,Y$ of the same dimension and a dominant map $F\colon X\to Y$,
the number of points in a preimage $F^{-1}(y)$ for $y\in Y$ is constant over an open dense subset of $Y$.
This constant number is the \demph{degree} of the map $F$~\cite{Harris}. 

Consider this for the Wronski map $\Wr_{L,m}$ when the image of $\Gr_mV_L$ has the same dimension as
$\Gr_mV_L$.
For generic linear ODO $L$ of order $n$ and any $m\in \{2,\ldots, n{-}1\}$ the Wronski map $\Wr_{L,m}$ is injective  (see
Remark \ref{genrem}) and so $\Wr_{L,m}$ has degree $1$. 
For any $L$, is it injective when $m=1$ or $m=n{-}1$.
We are interested in the following question.\medskip

\noindent{\bf Question 1.}
{\it Under what conditions on a linear ODO $L$ of order $n$  and on $1<m<n{-}1$ does the Wronski map  $\Wr_{L, m}$
  have degree greater than $1$?} 
\medskip

The classical Wronski map is when $V$ is the space of polynomials of degree $n{-}1$.
This corresponds to the ODO
$\defcolor{L_0}\, x(t)=x^{(n)}(t)$. 
Work of Schubert in 1886~\cite{Sch1886c}, combined with a result of Eisenbud and Harris in
1983~\cite {EH83} shows that the Wronski map $\Wr_{L_0, m}$ has degree 
 \begin{equation}
    \label{Schubert}
      \frac{1!2!\dotsb(n-m-1)!\cdot(m(n-m))!}{m! (m+1)!\dotsb(n-1)!}\,.
 \end{equation}
The degree exceeds $1$ except in the trivial cases of $m=1$ or $m=n{-}1$.

Three of us addressed Question 1 in a previous paper~\cite{HSZ2017}.
The operator
 \begin{equation}
 \label{adj}
   \defcolor{L^{\ast}}\,x(t)\ :=\ (-1)^{n} x^{(n)}(t)\ +\ \sum_{i=1}^{n-1}(-1)^{i}(a_{i}x)^{(i)}(t)
 \end{equation}
is \demph{(formally) adjoint} to the operator $L$~\eqref{ode}.
An operator $L$ is a \demph{(formally) self-adjoint differential operator} if $L^{\ast}=L$.
When $L$ is self-adjoint, its order $n$ is even.
 
Two linear ODOs $L$ and $\widetilde L$ on $I$ are \demph{equivalent} if
there exists a smooth nonvanishing function $\mu$ on $I$ such that
\[
 \widetilde L x\ =\ \frac{1}{\mu} L\bigl(\mu x\bigr)\,.
\]

We paraphrase two results from~\cite{HSZ2017}.
For both, $L$ is a linear ODO of order $n$.\medskip

\noindent{\bf Theorem 2.9 of~\cite{HSZ2017}}
{\it If $L$ is equivalent to a self-adjoint operator and $n=2m$, then the Wronski map $\Wr_{L, m}$ has even
  degree.}\medskip

\noindent{\bf Corollary 1.8 of~\cite{HSZ2017}}
{\it  If the Wronski map $\Wr_{L, m}$ has degree $2$, then $n=2m$ and $L$ is equivalent to a self-adjoint linear
  operator.}\medskip 

The proof of~\cite[Thm.\ 2.9]{HSZ2017} is based on two observations.
First, if $L$ is equivalent to a self-adjoint operator then the space $V_L$ is endowed with a canonical (up to a nonzero
scaling) symplectic structure $\sigma_L$.
Second, if $\Lambda^\angle$ is the skew-orthogonal complement of an $m$-dimensional subspace $\Lambda$ of $V_L$
with respect to the form $\sigma_L$, then 
\begin{equation}
\label{mainid}
\Wr_{L,m} (\Lambda^\angle)\ =\ \Wr_{L,m}(\Lambda),
\end{equation}
so that the Wronskian is preserved under taking skew-orthogonal complement.

From \eqref{Schubert} it follows that for the ODO $L_0\, x(t)=x^{(n)}(t)$ with  $n\geq 5$ and $m\notin\{1, n{-}1\}$ the
Wronski map $\Wr_{L_0, m}$ has degree  greater than $2$.
Thus  $n=2m$ is not necessary for the degree of the Wronski map to exceed 1.
\medskip

\noindent{\bf Question 2.}
{\it When $n=2m$, does the statement of\/~\cite[Cor.\ 1.8]{HSZ2017} generalize as follows:
If  the Wronski map $\Wr_{L, m}$ of a $2m$-th order linear ODO $L$ has degree greater than $2$, is $L$ equivalent to
a self-adjoint operator?}
\medskip

We address a generalization of Question 2.
The Grassmannian $\Gr_mV_L$ is a subvariety of Pl\"ucker space $\PP{\bwedge^m}{V_L}$.
Given a linear subspace $\PP Z\subset \PP{\bwedge^m}{V_L}$ ($Z$ is a linear subspace of $\bwedge^mV_L$), the linear
projection with center $\PP Z$ is the map $\PP{\bwedge^m}{V_L}\smallsetminus\PP Z \to \PP(\bwedge^mV_L)/Z$ induced by
the map $\bwedge^mV_L\to(\bwedge^mV_L)/Z$. 
When $\PP Z$ is disjoint from the Grassmannian, it induces the \demph{linear projection}
$\defcolor{\pi_Z}\colon\Gr_mV_L\to\PP(\bwedge^mV_L)/Z$.

Proposition 2.3 of~\cite{HSZ2017} identifies the Wronski map with a linear projection.
We explain that.
Given a basis  $f_1,\dotsc, f_n$ for $V_L$, let $f_1^*,\ldots f_n^*\in V_L^*$ be its dual basis and   set
\begin{equation}
\label{ceq}
  \defcolor{c(t)}\ :=\ \sum_{i=1}^n f_i(t) f_i^*\in V_L^*\,, \mbox{ for }t\in I\,.
\end{equation}
Fix $m\in\{1,\dotsc,n{-}1\}$ and define the following subspace of $\bwedge^m  V_L^*$,
 \begin{equation}
\label{defspaceK}
  \defcolor{X_L}\ :=\
  \bigl\langle c(t)\wedge c'(t)\wedge\cdots \wedge c^{(m-1)}(t) \mid t\in I\bigr\rangle\,,
\end{equation}
where 
\[
  c^{(j)}(t)\ =\ \sum_{i=1}^n f_i^{(j)}(t) f_i^*\in V_L^*\,.
\]
By~\cite[Prop.\ 2.3]{HSZ2017}, the Wronski map takes values in the space $X_L^*$ dual to $X_L$, which is
$(\bwedge^m V)/X_L^\perp$,  where
\[
  X_L^\perp\ =\ \{w\in \bwedge^m V_L\mid \omega(v)=0\,\  \forall v\in X_L\}
\]
is the annihilator of $X_L$ (for details see \cite[pp.\ 755-6]{HSZ2017}).

\begin{remark}
\label{genrem}
For generic linear ODO $L$, $X_L=\bwedge^m V_L^*$, which implies that the Wronski map is injective (this is a
consequence of Proposition~\ref{equiv} below, as $X_L^\perp=0$).  \hfill$\diamond$
\end{remark}

\begin{remark}
\label{genselfrem}
As a consequence of \cite[Sect.\ 2.3]{HSZ2017}, a linear ODO of order $2m$ is self-adjoint if and only if there exists a
symplectic form $\sigma$ on $V_L^*$ such that  
 \[
 X_L ^\perp\ \supseteq\ \CC \sigma\wedge \bwedge^{m-2} V_L\,.
 %
 %
 \]
Moreover, the canonical symplectic form on $V_L$ is induced by the form $\sigma$ through the identification of $V_L$ with
$V_L^*$ via $\sigma$. 
This inclusion implies that  
 \[
   \dim X_{L}^\perp\ \geq\ \dim \bwedge^{m-2}  V_L^*\ =\ \binom {2m}{m-2}\,,
 \]
with equality for a generic self-adjoint linear ODO of order $2m$.
When $m=3$ and $L$ is self-adjoint, the minimal possible dimension of $X_{L}^\perp$ is $6$.\hfill$\diamond$
\end{remark}

Let $V$ be an even-dimensional complex vector space and $1< m<\dim V$.
A linear subspace $Z\subset\bwedge^mV$ is \demph{self-adjoint} if there exists a
symplectic form $\sigma$ on $V^*$ such that 
%
 \[
  Z\ \supseteq\ \CC \sigma\wedge \bwedge^{m-2} V\,.
 \]
We state our main results.\medskip

\noindent{\bf Theorem~\ref{m=2iff}.} {\it
  When $m=2$ and $n=4$, if\/ $\PP Z$ is a linear subspace disjoint from $\Gr_2\CC^4$, then $Z$ is self-adjoint.
  }\medskip

When $m=3$ and $n=6$, we consider centers $Z$ of projective dimensions four or five.
  
\begin{proposition}
  \label{Prop:main}
  Suppose that $m=3$ and $n=6$.
  Let $Z\subset\bwedge^3\CC^6$ be a linear subspace with $\PP Z$ disjoint from $\Gr_3\CC^6$.
  \begin{enumerate}
    \item {\rm [Corollary \ref{wendy2}]} If $\dim\PP Z\leq 4$, then $\pi_Z$ has degree $1$.
    \item {\rm [Theorem \ref{mainm=3theor}]} If $\dim\PP Z=5$, then $\pi_Z$ has degree greater than $1$ if and only if $Z$
      is self-adjoint.
   \end{enumerate}
\end{proposition}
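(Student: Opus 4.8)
The plan is to regard Proposition~\ref{Prop:main} as the combination of Corollary~\ref{wendy2} and Theorem~\ref{mainm=3theor}, and to prove those. Write $V=\CC^6$, identify $\Gr_3 V$ with its Pl\"ucker image in $\PP\bwedge^3 V=\PP^{19}$, and for a point $z\in\PP\bwedge^3 V$ introduce the \emph{entry locus}
\[
  \defcolor{\Sigma_z}\ :=\ \{\,\Lambda\in\Gr_3 V\ :\ z\text{ lies on a secant or a tangent line of }\Gr_3 V\text{ through }\widehat\Lambda\,\}\,.
\]
The observation underlying everything is that $\pi_Z(\Lambda)=\pi_Z(\Lambda')$ for $\Lambda\ne\Lambda'$ exactly when $\myspan(\widehat\Lambda,\widehat{\Lambda'})$ meets $Z$; hence $\pi_Z$ has degree greater than $1$ if and only if through the generic point $\Lambda$ of $\Gr_3 V$ there passes a secant line (or, in the ramified case, a tangent line) of $\Gr_3 V$ meeting $\PP Z$. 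Since the chords and tangents of $\Gr_3 V$ meeting $\PP Z$ form a family of dimension at most $\dim\PP Z+\max_{z\in\PP Z}\dim\Sigma_z$, and since each such line carries only finitely many points of the Grassmannian, this can happen only when
\[
  \dim\PP Z+\max_{z\in\PP Z}\dim\Sigma_z\ \ge\ 9\ =\ \dim\Gr_3 V\,.
\]

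The first main input is an orbit computation. The group $GL_6$ acts on $\bwedge^3 V$ with finitely many orbits, so $\dim\Sigma_z$ is constant on each orbit and can be read off from normal forms; the outcome I would establish is that $\max_{z\notin\Gr_3 V}\dim\Sigma_z=4$, attained \emph{precisely} on the $14$-dimensional orbit $\defcolor{\mathcal O}$ of $3$-forms $\omega\wedge v$ with $\omega$ a nondegenerate $2$-vector and $v\ne0$ — equivalently, the $3$-forms divisible by a vector and of $5$-dimensional support — whose boundary strata are only $\Gr_3 V$ and $0$. Granting this, part~(1) is immediate: when $\dim\PP Z\le 4$ the family of lines above has dimension at most $4+4=8<9$, too small to sweep out $\Gr_3 V$, so $\pi_Z$ has degree $1$. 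When $\dim\PP Z=5$ and $\pi_Z$ has degree greater than $1$, the displayed inequality forces $\dim\Sigma_z=4$ for generic $z\in\PP Z$, i.e.\ $\PP Z\subseteq\overline{\mathcal O}$. For the equivalence in part~(2), the easy direction is the Grassmannian counterpart of identity~\eqref{mainid}: if $Z=\omega\wedge V$ with $\omega$ nondegenerate, then $\omega$ makes $V$ symplectic, the skew-orthogonal complement $\Lambda\mapsto\Lambda^{\angle}$ is a regular involution of $\Gr_3 V$ whose fixed locus (the Lagrangian Grassmannian) has dimension $6<9$, hence is generically two-to-one, and the symplectic Lefschetz decomposition $\bwedge^3 V=\bwedge^3_0 V\oplus(\omega\wedge V)$, together with the fact that the symplectic Hodge star carries $\widehat\Lambda$ to a multiple of $\widehat{\Lambda^{\angle}}$, gives $\widehat{\Lambda^{\angle}}\equiv c\,\widehat\Lambda\pmod{\omega\wedge V=Z}$ for a scalar $c$; thus $\pi_Z(\Lambda^{\angle})=\pi_Z(\Lambda)$ and $\deg\pi_Z\ge 2$ (cf.\ Remark~\ref{genselfrem}).

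It remains to pass from ``$\PP Z\subseteq\overline{\mathcal O}$'' (with $\PP Z$ a $\PP^5$ disjoint from $\Gr_3 V$) to ``$Z=\omega\wedge V$'', and here the real work lies. Because $\PP Z$ misses $\Gr_3 V$, the subspace $Z$ contains no nonzero decomposable $3$-form, so every nonzero $z\in Z$ lies in the open stratum $\mathcal O$; writing $z=u_z\wedge\theta_z$, the vector $[u_z]\in\PP V$ is uniquely determined, giving a morphism $\delta\colon\PP Z\to\PP V$. One analyses $\delta$ using that $Z$ is \emph{linear}: a generic sum $u_1\wedge\theta_1+u_2\wedge\theta_2$ with $[u_1]\ne[u_2]$ has $6$-dimensional support and so is not divisible by any vector, which forces the $[u_z]$ and the supports of the members of $Z$ to vary in a very constrained way. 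Pushing this, and using that the degenerate configurations ``$\delta$ constant'', i.e.\ $Z\subseteq u_0\wedge\bwedge^2 V$, and ``all supports equal'', i.e.\ $Z\subseteq\bwedge^3 H$ for a hyperplane $H$, both make $\pi_Z$ generically injective — for such $Z$ a generic $\Lambda$ has no partner, precisely because $Z$ contains no decomposable $3$-form — one is left only with $Z=\omega\wedge V$ for a nondegenerate $2$-vector $\omega$. I expect the main obstacle to be exactly this last step: the rigidity statement that the only $\PP^5$'s contained in the orbit closure $\overline{\mathcal O}$ and avoiding $\Gr_3 V$ are the $\PP(\omega\wedge V)$, which requires a somewhat delicate case analysis rather than a soft dimension count. (An alternative, softer route to part~(1) bypasses the sharpness of the orbit computation: enlarging $Z$ to a center $Z'$ with $\dim\PP Z'=5$ can only increase the degree, so if $\pi_Z$ had degree greater than $1$ then by part~(2) every $Z'\supseteq Z$ of projective dimension $5$ would be self-adjoint, which is impossible since these $Z'$ form a positive-dimensional family while the self-adjoint ones among them form at most a finite set.)
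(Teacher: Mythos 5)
Your overall strategy coincides with the paper's: the identity $\calS_Z=\bigcup_{\omega\in Z}\calS_\omega$ (Lemma~\ref{dim1lem} and \eqref{SR1}), the Segre orbit classification of $\PP{\bwedge^3}V$, and the computation that $\calS_\omega$ is finite on $O_0$, empty on $O_1$, and four-dimensional exactly on $O_5$ (Proposition~\ref{orbitpairprop}, via the identification of $\calS_\omega$ with a dense subset of the Schubert variety $\Omega_\omega\simeq\Gr_2\CC^4$ and the degree-two projection there). Granting that orbit computation, your dimension count gives part (1) exactly as in Theorem~\ref{wendy1}/Corollary~\ref{wendy2}, and your reduction of part (2) to ``$\PP Z\subset O_5$'' is the paper's first step; the ``if'' direction via the symplectic involution is the cited identity \eqref{mainidgen}. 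Up to this point the proposal is sound and essentially identical to the paper.

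The genuine gap is the one you flag yourself: the rigidity statement that a $\PP^5$ contained in $O_5$ and disjoint from $\Gr_3V$ must equal $\PP(\sigma\wedge V)$ for a symplectic $\sigma$. This is not a loose end one can ``push through''; it is the technical heart of the paper, occupying Lemma~\ref{petro}, Corollary~\ref{moveH}, Lemma~\ref{L>4}, Lemma~\ref{Cartanlemma}, and the proof of Theorem~\ref{mainm=3theor}. Your proposed mechanism --- that a sum $u_1\wedge\theta_1+u_2\wedge\theta_2$ with $[u_1]\neq[u_2]$ generically has full support, forcing the $[u_z]$ and supports to be constrained --- is only the opening move, and the dichotomy it suggests (either $\delta$ is constant, or all members share a common $2$-form) is false: Lemma~\ref{petro}(3) exhibits pencils in $O_5$ on which $\alpha_\omega$ varies but no common $2$-form exists (only the hyperplane $A_\omega$ is constant). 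Ruling out type-(3) lines requires knowing that \emph{both} induced maps $\PP Z\to\PP V$ and $\PP Z\to\PP V^*$ are surjective when $\deg\pi_Z>1$ (Lemma~\ref{L>4}, which itself needs Proposition~\ref{P:Compression}); and even after every line $\langle\omega_i,\omega_j\rangle$ is known to be of type (2), the pairwise $2$-forms must be glued into a single $\sigma$ with $\omega_i=\alpha_i\wedge\sigma$ for a basis $\alpha_1,\dotsc,\alpha_6$ of $V$, which is where the Cartan-type Lemma~\ref{Cartanlemma} enters. None of this appears in the proposal, so part (2) is not established. (Your parenthetical alternative for part (1) does work once part (2) is known --- one only needs a single non-self-adjoint extension $Z'\supset Z$, e.g.\ $\langle Z,\omega_0\rangle$ with $\omega_0\in O_0$ generic, rather than finiteness of the self-adjoint extensions --- but it does not help with the missing rigidity argument.)
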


We deduce our main results concerning Question 2.

\begin{theorem}
  \label{theoremODO2}
  Let $L$ be a linear ODO of order $4$.
  Then the degree of the Wronski map $\Wr_{L, 2}$ exceeds $1$ if and only if $L$ is equivalent to a self-adjoint linear ODO.
\end{theorem}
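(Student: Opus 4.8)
The plan is to reduce the statement to the geometry of the Pl\"ucker quadric and then quote Theorem~\ref{m=2iff}. First I would invoke \cite[Prop.\ 2.3]{HSZ2017} to replace the Wronski map $\Wr_{L,2}$ by the linear projection $\pi_{X_L^\perp}\colon\Gr_2 V_L\to\PP X_L^*$ with center $\PP(X_L^\perp)\subset\PP(\bwedge^2 V_L)$. Since $\dim V_L=4$, the ambient space $\PP(\bwedge^2 V_L)$ is a $\PP^5$ and $\Gr_2 V_L$ is the Pl\"ucker quadric hypersurface inside it. Because the Wronskian of a two-dimensional subspace of $V_L$ is never identically zero, $\pi_{X_L^\perp}$ is defined on all of $\Gr_2 V_L$, i.e.\ $\PP(X_L^\perp)$ is disjoint from $\Gr_2 V_L$; as a linear subspace of $\PP^5$ disjoint from a hypersurface has dimension at most $0$, I get $\dim X_L^\perp\in\{0,1\}$, and would then treat these two cases.

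In the case $X_L^\perp=0$ one has $X_L=\bwedge^2 V_L^*$, so $\Wr_{L,2}$ is the Pl\"ucker embedding and is injective by Remark~\ref{genrem}; its degree is $1$, and by Remark~\ref{genselfrem} the operator $L$ is not equivalent to a self-adjoint one (the zero subspace contains no symplectic form). In the case $\dim X_L^\perp=1$, write $X_L^\perp=\CC\sigma$; since $[\sigma]\notin\Gr_2 V_L$ the $2$-vector $\sigma$ is not decomposable, hence has rank $4$, i.e.\ is a symplectic form on $V_L^*$. Thus $X_L^\perp=\CC\sigma=\CC\sigma\wedge\bwedge^{0}V_L$ is self-adjoint (this is exactly Theorem~\ref{m=2iff}), so by Remark~\ref{genselfrem} $L$ is equivalent to a self-adjoint operator; and $\Wr_{L,2}$ is the projection of the quadric $\Gr_2 V_L\subset\PP^5$ away from the external point $[\sigma]$, which has degree $2$ since a general line through $[\sigma]$ meets the quadric in two points (alternatively, \cite[Thm.\ 2.9]{HSZ2017} already forces an even, hence $\geq 2$, degree here). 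Combining the two cases, $\deg\Wr_{L,2}\in\{1,2\}$ and it exceeds $1$ precisely when $\dim X_L^\perp=1$, i.e.\ precisely when $L$ is equivalent to a self-adjoint operator; this gives the theorem, with the bonus that the degree is then exactly $2$.

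The main obstacle, and the one place where something must be checked rather than merely cited, is the passage via Remark~\ref{genselfrem} between the linear-algebraic statement ``$X_L^\perp$ contains a symplectic form on $V_L^*$'' and the analytic statement ``$L$ is equivalent to a self-adjoint ODO''. The key point is that the former is invariant under the equivalence $\widetilde L\,x=\mu^{-1}L(\mu x)$: such an equivalence induces a linear isomorphism $V_{\widetilde L}\cong V_L$ under which the curve $c(\cdot)$ of \eqref{ceq}, and hence the subspace $X_L$ of \eqref{defspaceK}, goes to its analogue for $\widetilde L$ up to multiplication by a nowhere-zero function, so that $X_{\widetilde L}^\perp$ is the image of $X_L^\perp$ under a linear isomorphism of $\bwedge^2$ of the solution spaces --- which preserves the rank of a $2$-vector and hence the existence of a symplectic element. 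Granting this, the argument is a short assembly of Theorem~\ref{m=2iff}, Remarks~\ref{genrem} and~\ref{genselfrem}, and \cite[Thm.\ 2.9]{HSZ2017}.
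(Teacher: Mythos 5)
Your proposal is correct and follows the same route the paper intends: identify $\Wr_{L,2}$ with $\pi_{X_L^\perp}$ via Proposition~\ref{equiv}, note that a center disjoint from the quadric hypersurface $\Gr_2V_L\subset\PP^5$ is at most a point and hence (if nonzero) a symplectic form as in Theorem~\ref{m=2iff}, and then translate through Remark~\ref{genselfrem} and \cite[Thm.\ 2.9]{HSZ2017}. Your explicit check that the self-adjointness criterion for $X_L^\perp$ is invariant under the equivalence $\widetilde L\,x=\mu^{-1}L(\mu x)$ is a welcome bit of care at the one point the paper leaves implicit.
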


\begin{theorem}
 \label{theoremODO3}
  Let $L$ be a linear ODO of order $6$.
  Then the following statements hold.
\begin{enumerate}
\item  If $\dim X_L^\perp\leq 5$, then the degree of the Wronski map $\Wr_{L, 3}$  is equal to $1$.

\item If $\dim X_L^\perp =6$, then  the degree of the Wronski map  $\Wr_{L, 3}$ exceeds $1$ if and only if $L$  is
  equivalent to a self-adjoint linear ODO. 
\end{enumerate}
\end{theorem}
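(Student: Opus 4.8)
The plan is to derive both statements from the geometric Proposition~\ref{Prop:main} via the dictionary between Wronski maps and linear projections. By~\cite[Prop.\ 2.3]{HSZ2017}, after choosing an identification $V_L\cong\CC^6$, the Wronski map $\Wr_{L,3}$ coincides with the linear projection $\pi_Z$ of $\Gr_3\CC^6$ with center $\PP Z$, where $Z=X_L^\perp\subset\bwedge^3\CC^6$. Since the Wronskian of a basis of any $3$-dimensional subspace $\Lambda\subset V_L$ is not identically zero, $\Wr_{L,3}$ is a morphism on all of $\Gr_3 V_L$; hence $\PP Z$ is disjoint from $\Gr_3\CC^6$, Proposition~\ref{Prop:main} applies to this $Z$, and $\deg\Wr_{L,3}=\deg\pi_Z$ (when $X_L^\perp=0$ the map $\pi_Z$ is the Pl\"ucker embedding, of degree $1$).

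For part (1), the hypothesis $\dim X_L^\perp\leq 5$ gives $\dim\PP Z\leq 4$, so Proposition~\ref{Prop:main}(1) yields $\deg\pi_Z=1$, whence $\deg\Wr_{L,3}=1$. For part (2), the hypothesis $\dim X_L^\perp=6$ gives $\dim\PP Z=5$, so Proposition~\ref{Prop:main}(2) shows that $\deg\Wr_{L,3}>1$ if and only if $Z=X_L^\perp$ is a self-adjoint subspace of $\bwedge^3\CC^6$. It therefore remains to show that $X_L^\perp$ is self-adjoint precisely when $L$ is equivalent to a self-adjoint linear ODO. This is essentially the content of Remark~\ref{genselfrem} together with~\cite[Sect.\ 2.3]{HSZ2017}: a self-adjoint operator $\widetilde L$ of order $2m$ has $X_{\widetilde L}^\perp\supseteq\CC\sigma\wedge\bwedge^{m-2}V_{\widetilde L}$ for a symplectic form $\sigma$ on $V_{\widetilde L}^*$, i.e.\ $X_{\widetilde L}^\perp$ is self-adjoint, and conversely.

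To pass from $\widetilde L$ to an equivalent $L$, I would use that an equivalence $\widetilde L x=\mu^{-1}L(\mu x)$ induces a linear isomorphism $V_{\widetilde L}\to V_L$, $\widetilde v\mapsto\mu\widetilde v$; from the identity $\Wr(\mu\widetilde f_1,\dots,\mu\widetilde f_m)=\mu^{m}\Wr(\widetilde f_1,\dots,\widetilde f_m)$ one checks that the curves defining $X_L\subset\bwedge^m V_L^*$ and $X_{\widetilde L}\subset\bwedge^m V_{\widetilde L}^*$ differ only by this isomorphism and a nowhere-zero scalar factor, so the induced isomorphism $\bwedge^{m}V_{\widetilde L}\to\bwedge^m V_L$ carries $X_{\widetilde L}^\perp$ onto $X_L^\perp$. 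Since this isomorphism sends symplectic forms to symplectic forms and $\CC\sigma\wedge\bwedge^{m-2}V_{\widetilde L}$ to $\CC\sigma'\wedge\bwedge^{m-2}V_L$ for the image $\sigma'$ of $\sigma$, the property of being a self-adjoint subspace is invariant under equivalence; hence $X_L^\perp$ is self-adjoint if and only if $L$ is equivalent to a self-adjoint operator, and combining this with Proposition~\ref{Prop:main}(2) finishes the proof. The only delicate point is this last translation --- reading the intrinsic condition on $X_L^\perp\subset\bwedge^3 V_L$ correctly as a condition on the equivalence class of $L$ rather than on a single representative; the substantive work, namely pinning down the degree of $\pi_Z$ for centers of projective dimension $4$ and $5$, is already carried by Proposition~\ref{Prop:main} and is not re-examined here.
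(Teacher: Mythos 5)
Your proposal is correct and follows essentially the same route the paper intends: Theorem~\ref{theoremODO3} is deduced directly from Proposition~\ref{Prop:main} via the identification $\Wr_{L,3}=\pi_{X_L^\perp}$ of Proposition~\ref{equiv}, the disjointness of $X_L^\perp$ from the Grassmannian (Remark~\ref{doesnotmeet}), and the characterization of self-adjointness of $X_L^\perp$ from Remark~\ref{genselfrem}. Your explicit check that equivalence of ODOs carries $X_{\widetilde L}^\perp$ to $X_L^\perp$ and preserves self-adjointness of the center is a point the paper leaves implicit, and it is handled correctly.
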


In the next section, we discuss an application of Theorems~\ref{theoremODO2} and~\ref{theoremODO3} to pole placement in
linear systems theory.
We prove our main results in Section~\ref{S:GWrM}.

\section{Application to Pole Placement for Constant Output Feedback}\label{contrsec}
For a background on linear systems theory, see~\cite{Delch}.
A \demph{state-space realization} of a (strictly proper) $m$-input $p$-output linear system is a triple
$\Sigma=(A,B,C)$ of matrices of sizes $N\times N$, $N\times m$, and $p\times N$.
This defines a system of first order constant coefficient linear differential equations,
 \begin{equation}\label{Eq:State_Space}
   \dot{x}\ =\ Ax + Bu\qquad\mbox{and}\qquad y\ =\ Cx\,,
 \end{equation}
 where $x\in \CC^N$, $u\in \CC^m$, and $y\in \CC^p$ are functions of $t\in\CC$ (and
 $\defcolor{\dot{x}}=\frac{d}{dt}x$).
 Applying Laplace transform ($u(t)\mapsto\hat{u}(s)$) and assuming that $x(0)=0$, we eliminate $\widehat x$ to obtain
\[
   \widehat{y}(s)\ = \ C(sI-A)^{-1}B\, \widehat{u}(s)
    \ =\ G(s)\, \widehat{u}(s)\,,
\]
 where  $\defcolor{G(s)}:=C(sI-A)^{-1}B$ is the \demph{transfer function} of~\eqref{Eq:State_Space}.
 This $p\times m$ matrix of rational functions has poles at the eigenvalues of $A$.

 A linear system may be controlled with output feedback, setting
 $u=Ky$, where $K$ is a constant $m\times p$ matrix.
 Substitution in~\eqref{Eq:State_Space} and elimination gives the closed loop system,
\[
   \dot{x}\ =\ (A + BKC) x\,,
\]
 whose transfer function has poles at the zeroes of the characteristic polynomial
 \begin{equation}\label{Eq:PPP}
   \defcolor{P_\Sigma}(K)\ =\ P_\Sigma\ :=\
    \det( sI \ -\ (A+BKC))\,.
 \end{equation}

 The map $K\mapsto P_\Sigma(K)$ is called the \demph{pole placement map}.
 Given a system~\eqref{Eq:State_Space} with state-space realization $\Sigma$ and poles
 $\defcolor{z}=\{z_1,\dotsc,z_N\}\subset\CC$, the pole placement problem asks for a matrix $K$ such that
 $P_\Sigma(K)$ vanishes at the points of $z$.
 This is only possible for general $z$ if $N\leq mp$~\cite{Byrnes}.
 We are interested when $N\geq mp$ and the pole placement map is a nontrivial branched cover of its
 image.

Using the injection $\Mat_{m\times p}\CC\to \Gr_p\CC^{m+p}$ where $K$ is sent to the
column space of the matrix $(\begin{smallmatrix}K\\I_p\end{smallmatrix})$, standard manipulations
show that the pole placement map is a linear projection of $\Gr_p\CC^{m+p}$.
The map that sends $s\in\PP^1$ to the column space of
$(\begin{smallmatrix}I_m\\ G(s)\end{smallmatrix})$
defines the \demph{Hermann-Martin curve} $\defcolor{\gamma_\Sigma}\colon\PP^1\to\Gr_m\CC^{m+p}$~\cite{HM78}.
Its degree is the McMillan degree, which is the minimal number $N$ in a state-space
realization giving the transfer function $G(s)$.
Such a minimal representation is observable and controllable~\cite{Delch}.

If $X_\Sigma\subset\bwedge^m\CC^{m+p}$ is the linear span of the image of the curve $\gamma_\Sigma$ 
and $\defcolor{Z}:=X_\Sigma^\perp$ is its annihilator in $\bwedge^p\CC^{m+p}$, then the
pole placement map $P_\Sigma$ is the linear projection $\pi_{Z}$, and we may identify the quotient
$X_\Sigma ^*=(\bwedge^p\CC^{m+p})/Z$ with the space of polynomials of degree at most $N$.
The pole placement map is \demph{proper} if $\emptyset\neq\PP Z$ is disjoint from the Grassmannian $\Gr_p\CC^{m+p}$.
This terminology is not standard in systems theory.

Consider the following change of coordinates in the state, input, and output spaces
\begin{equation}
\label{statefeed1}
                 x\ =\ R\widetilde{x}\,,\qquad 
                 u\ =\ Q\widetilde{y}+W\widetilde{u}\,,\quad\mbox{and}\quad
                 y\ =\ T\widetilde{y}\,,
\end{equation}
where $R$, $W$, and $T$ are invertible matrices  and $Q$ is a $m\times p$ matrix.
The transformation of the space $\CC^N\times \CC^m\times \CC^p$ given by \eqref{statefeed1} is a
\demph{state-feedback transformation}.
Substituting \eqref{statefeed1} into \eqref{Eq:State_Space}, we obtain a new state-space realization in
$(\widetilde x, \widetilde u,\widetilde y)$,
 \[
   \dot{\widetilde{x}}\ =\ \widetilde{A}\widetilde{x}+\widetilde{B}\widetilde{u}
   \qquad\mbox{and}\qquad
  \widetilde{y}\ =\ \widetilde C\widetilde{x}\,,
 \]
given by the triple of matrices $\widetilde\Sigma=(\widetilde A,\widetilde B, \widetilde C)$, where
\begin{equation}
\label{statefeedbakmat}
  \widetilde A\  =\ R^{-1}(A+BQT^{-1}C)R\,, \quad \widetilde B\ =\ R^{-1}BW\,,
  \quad\mbox{and}\quad \widetilde C\ =\ T^{-1}CR\,.
\end{equation}
Two realizations are \demph{state-feedback equivalent} if one is a state-feedback transformation of the other.
The following is standard.

\begin{proposition}
 Equivalent state-space realizations have equivalent Hermann-Martin curves, where the equivalence is
 induced by an element of $\GL(\CC^{m+p})$.
\end{proposition}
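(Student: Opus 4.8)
The plan is to compute how the transfer function changes under a state-feedback transformation and then to observe that the Hermann-Martin curve point at $s$ is simply the graph of the transfer function $G(s)$ inside $\CC^{m+p}=\CC^m\oplus\CC^p$, so that the change in $G$ is realized by a fixed element of $\GL(\CC^{m+p})$.

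First I would compute the transfer function $\widetilde G(s)=\widetilde C(sI-\widetilde A)^{-1}\widetilde B$ of the realization $\widetilde\Sigma$ in~\eqref{statefeedbakmat}. Rather than manipulating $(sI-A-BQT^{-1}C)^{-1}$ via the matrix inversion lemma, the quickest route is to work with the Laplace-transformed input--output relations: substituting $\widehat{\widetilde y}=T^{-1}\widehat y$ and $\widehat u=QT^{-1}\widehat y+W\widehat{\widetilde u}$ into $\widehat y=G(s)\widehat u$ and eliminating $\widehat y$ gives
\[
  \widetilde G(s)\ =\ \bigl(T-G(s)Q\bigr)^{-1}G(s)W\,.
\]
Next, identify $\gamma_\Sigma(s)$, for $s$ not a pole of $G$, with the graph $\{(w,G(s)w):w\in\CC^m\}$, that is, with the column space of $(\begin{smallmatrix}I_m\\ G(s)\end{smallmatrix})$. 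The coordinate change~\eqref{statefeed1} on the input and output spaces is a fixed element $g\in\GL(\CC^{m+p})$ acting on $\CC^m\oplus\CC^p$, with block-upper-triangular (hence invertible) matrix
\[
  g\ =\ \begin{pmatrix} W^{-1} & -W^{-1}QT^{-1}\\ 0 & T^{-1}\end{pmatrix}\,.
\]
Then I would check directly that $g$ carries $\gamma_\Sigma(s)$ to $\gamma_{\widetilde\Sigma}(s)$: since
\[
  g\begin{pmatrix}I_m\\ G(s)\end{pmatrix}\ =\ \begin{pmatrix}W^{-1}\bigl(I-QT^{-1}G(s)\bigr)\\ T^{-1}G(s)\end{pmatrix}\,,
\]
right-multiplying by the (generically invertible) matrix $(I-QT^{-1}G(s))^{-1}W$, which does not change the column space, and using the push-through identity $T^{-1}(I-G(s)QT^{-1})^{-1}=(T-G(s)Q)^{-1}$ turns this into $(\begin{smallmatrix}I_m\\ \widetilde G(s)\end{smallmatrix})$. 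Hence $\gamma_{\widetilde\Sigma}(s)=g\cdot\gamma_\Sigma(s)$ wherever both sides are given by these formulas.

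Finally, since $\gamma_\Sigma$ and $\gamma_{\widetilde\Sigma}$ are morphisms $\PP^1\to\Gr_m\CC^{m+p}$ into a projective variety and $g$ acts regularly on $\Gr_m\CC^{m+p}$, the equality $\gamma_{\widetilde\Sigma}=g\circ\gamma_\Sigma$ on a dense open subset of $\PP^1$ forces it everywhere; the two Hermann-Martin curves are thus equivalent via $g\in\GL(\CC^{m+p})$. I expect the only delicate point to be exactly this extension across the poles of $G(s)$ and $\widetilde G(s)$, where the graph description degenerates; the density argument handles it, and the remaining algebra is short, the main bookkeeping being to keep straight which of $R,W,T,Q$ acts on which space in~\eqref{statefeed1}--\eqref{statefeedbakmat}.
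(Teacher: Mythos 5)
Your argument is correct. The paper states this proposition without proof (it is labelled ``standard''), so there is nothing to compare against line by line; your proposal supplies the expected argument: the formula $\widetilde G=(T-GQ)^{-1}GW$ is right, the block-triangular matrix $g$ is exactly the induced change of coordinates on the input--output space $\CC^m\oplus\CC^p$ (and the state change $R$ drops out of the transfer function by similarity), the push-through identities check out, and the extension from a dense open subset of $\PP^1$ to all of $\PP^1$ is handled correctly since both curves are morphisms to a projective variety. The only cosmetic addition worth making is the reason $I-QT^{-1}G(s)$ is generically invertible: $G$ is strictly proper, so this matrix tends to $I$ as $s\to\infty$ and hence is not identically singular.
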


A state-space realization \eqref{Eq:State_Space} is \demph{symmetric}~\cite{Fuhr} if $A^T=A$ and $C=B^T$. 

\begin{proposition}\cite[Sect.\ 3.2]{HSZ2017}
 \label{symmcontr}
  For a controllable and observable linear system with state-space realization $\Sigma$~\eqref{Eq:State_Space},
  the corresponding center $Z$ is self-adjoint if and only if the realization $\Sigma$ is state-feedback equivalent to
  a symmetric realization. 
\end{proposition}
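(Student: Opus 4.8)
The plan is to reduce everything to the Hermann--Martin curve and to prove that $Z$ is self-adjoint precisely when $\gamma_\Sigma$ maps $\PP^1$ into a Lagrangian Grassmannian. Since $Z=X_\Sigma^\perp$ and $X_\Sigma=\myspan\bigl(\gamma_\Sigma(\PP^1)\bigr)$, the perfect pairing $\bwedge^{m+2}\CC^{m+p}\times\bwedge^{p-2}\CC^{m+p}\to\bwedge^{m+p}\CC^{m+p}\cong\CC$ shows that, for a $2$-vector $\sigma\in\bwedge^2\CC^{m+p}$, one has $Z\supseteq\CC\sigma\wedge\bwedge^{p-2}\CC^{m+p}$ if and only if $\sigma\wedge w=0$ for every point $w$ of the curve, viewed in $\bwedge^m\CC^{m+p}$. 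For decomposable $w$ spanning a subspace $\Lambda_0$ the equation $\sigma\wedge w=0$ forces $\sigma\in\Lambda_0\wedge\CC^{m+p}$, so a symplectic $\sigma$ has rank $\leq 2m$, whence $p\leq m$; the same bound for the transpose system $(A^T,C^T,B^T)$, whose associated center is equivalent to $Z$, gives $p\geq m$, so $p=m$ and $\dim\CC^{m+p}=2m$. Using the Lefschetz $\mathfrak{sl}_2$-action on $\bwedge^{\bullet}\CC^{2m}$ with raising operator $L=\sigma\wedge-$ and lowering operator $\iota_\omega$, $\omega:=\sigma^{-1}$, we have $\ker\bigl(L|_{\bwedge^m}\bigr)=\ker\bigl(\iota_\omega|_{\bwedge^m}\bigr)$ in the middle degree, and a decomposable $w$ lies in this common kernel if and only if $\Lambda_0$ is $\omega$-isotropic, hence Lagrangian. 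Combining, $Z$ is self-adjoint if and only if $\gamma_\Sigma(\PP^1)$ lies in the Lagrangian Grassmannian of some symplectic form on $\CC^{2m}$.

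For the implication ``$\Leftarrow$'', if $\Sigma$ is symmetric then $G(s)=B^T(sI-A)^{-1}B$ is a symmetric matrix for every $s$, so for the standard symplectic form $\omega_0$ one has $\omega_0\bigl(\binom{v}{G(s)v},\binom{w}{G(s)w}\bigr)=v^T\bigl(G(s)-G(s)^T\bigr)w=0$; thus $\gamma_\Sigma(\PP^1)$ is Lagrangian and $Z$ is self-adjoint. If $\Sigma$ is merely state-feedback equivalent to a symmetric realization, then by the Proposition above asserting that equivalent realizations have $\GL(\CC^{2m})$-equivalent Hermann--Martin curves, the two centers differ by an element of $\GL(\CC^{2m})$, and self-adjointness is $\GL(\CC^{2m})$-invariant because linear automorphisms carry symplectic forms to symplectic forms.

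For ``$\Rightarrow$'', suppose $Z$ is self-adjoint, so $\gamma_\Sigma(\PP^1)$ lies in the Lagrangian Grassmannian of a symplectic form $\omega$ on $\CC^{2m}$. Strict properness gives $G(\infty)=0$, so $\gamma_\Sigma(\infty)=\CC^m\times 0$ is Lagrangian; writing $\omega$ in the two blocks as $\bigl(\begin{smallmatrix}0&\beta\\ -\beta^T&\gamma\end{smallmatrix}\bigr)$, nondegeneracy makes $\beta$ invertible and $\gamma$ skew-symmetric, and isotropy of $\gamma_\Sigma(s)$ becomes the identity $\beta G(s)-G(s)^T\beta^T+G(s)^T\gamma G(s)=0$. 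One then checks that the matrix $\bigl(\begin{smallmatrix}\beta^T&-\tfrac12\gamma\\ 0&I\end{smallmatrix}\bigr)$ acts on the Hermann--Martin curve exactly as the state-feedback transformation with $T=I$, $W=\beta^{-T}$, $Q=\tfrac12\beta^{-T}\gamma$, and that it transforms $G$ into $\widetilde G=G\bigl(\beta^T-\tfrac12\gamma G\bigr)^{-1}$, which the displayed identity shows to be symmetric. Hence, after this state-feedback transformation, the transfer function is symmetric. Since $\Sigma$ is controllable and observable it is a minimal realization, and by the classical realization theory of symmetric rational matrix functions over $\CC$ (see~\cite{Fuhr}) a symmetric transfer function admitting a minimal realization admits a symmetric one; as any two minimal realizations differ by a state-space change of coordinates, $\Sigma$ is state-feedback equivalent to a symmetric realization.

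The main obstacle is this last step: passing from a symmetric transfer function to a genuinely symmetric minimal realization $A=A^T$, $C=B^T$ (not merely $G=G^T$) requires showing that the intertwiner between a minimal realization and its transpose, unique up to scalar, may be chosen symmetric, followed by a square-root argument valid over $\CC$. The remaining technical points needing care are the reduction $p=m$ and the verification that the symplectic normalization above is achievable within the (parabolic) group of state-feedback transformations rather than all of $\GL(\CC^{2m})$.
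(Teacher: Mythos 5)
The paper offers no proof of this proposition; it is imported verbatim from \cite[Sect.~3.2]{HSZ2017}, so there is nothing in the text to compare against line by line. Your reconstruction follows what is surely the intended route: unwind $Z=X_\Sigma^\perp\supseteq\CC\sigma\wedge\bwedge^{p-2}\CC^{m+p}$ into $\sigma\wedge\gamma_\Sigma(s)=0$, note that for a decomposable middle-degree vector this says the $m$-plane is Lagrangian, use the standard symplectic form for the ``symmetric $\Rightarrow$ self-adjoint'' direction, and for the converse normalize $\omega$ by a block-triangular transformation realizable by state feedback so that the transfer function becomes symmetric, then invoke the intertwiner argument to pass from a symmetric minimal $G$ to a symmetric minimal realization. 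Your explicit choice $T=I$, $W=\beta^{-T}$, $Q=\tfrac12\beta^{-T}\gamma$ does check out: the push-through identity gives $(I-GQ)^{-1}GW=G(\beta^T-\tfrac12\gamma G)^{-1}$, and the isotropy identity $\beta G-G^T\beta^T+G^T\gamma G=0$ is exactly the symmetry of $\widetilde G$. So the ``parabolic group'' worry you raise at the end is already settled by your own computation, and the final step you flag (that $S$ intertwining $(A,B,C)$ with $(A^T,C^T,B^T)$ is unique, hence symmetric, hence $S=R^TR$ over $\CC$) is the standard Fuhrmann argument and closes the proof.

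The one step that does not work as written is the deduction $p\geq m$ from the transpose system. From $\sigma\wedge\gamma_\Sigma(s)=0$ you correctly get that each $\gamma_\Sigma(s)$ is coisotropic, whence $p\leq m$; but then the annihilator curve (which is the Hermann--Martin curve of $(A^T,C^T,B^T)$ up to identification) consists of \emph{isotropic} $p$-planes, not coisotropic ones, so the transposed center is not ``equivalent to $Z$'' in any sense that transfers self-adjointness --- indeed for $p<m$ a $p$-plane can never be coisotropic, so that center is never self-adjoint and your bound cannot be extracted from it. For $p<m$ one can perfectly well have rational curves of coisotropic $m$-planes, so a literal reading of the proposition without $m=p$ is problematic; the resolution is that $m=p$ is an implicit standing hypothesis here (a symmetric realization forces $p=m$, and the paper's entire self-adjointness framework lives on $\Gr_m\CC^{2m}$). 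Granting that, your argument is complete and the transpose detour is unnecessary.
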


The degree of the pole placement map of a symmetric state-space realization is at least $2$, because 
$P_\Sigma(K^T)=P_\Sigma(K)$.
The following corollaries are consequences of Theorem \ref{m=2iff},
of Corollary \ref{wendy2}, and of Theorem \ref{mainm=3theor}.

\begin{corollary}
  \label{maincorcontr2}
   If a controllable and observable linear system with $m=p=2$ has a proper pole placement map, then any state-space
   realization \eqref{Eq:State_Space} is state-feedback equivalent to a symmetric realization. 
\end{corollary}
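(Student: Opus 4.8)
The plan is to reduce Corollary~\ref{maincorcontr2} to the Grassmannian statements already available in the paper. Given a controllable and observable linear system with $m=p=2$ and McMillan degree $N$, its Hermann--Martin curve $\gamma_\Sigma\colon\PP^1\to\Gr_2\CC^4$ has degree $N$, and we set $X_\Sigma$ to be the linear span of its image and $Z:=X_\Sigma^\perp\subset\bwedge^2\CC^4$. By the discussion preceding the corollary, the pole placement map $P_\Sigma$ is the linear projection $\pi_Z$, and properness of the pole placement map means precisely that $\emptyset\neq\PP Z$ is disjoint from $\Gr_2\CC^4$. Thus $\PP Z$ is a nonempty linear subspace disjoint from $\Gr_2\CC^4$, so Theorem~\ref{m=2iff} applies and tells us that $Z$ is self-adjoint.

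With $Z$ self-adjoint, I would invoke Proposition~\ref{symmcontr}: for a controllable and observable linear system, the center $Z$ being self-adjoint is equivalent to $\Sigma$ being state-feedback equivalent to a symmetric realization. Since the hypotheses of Corollary~\ref{maincorcontr2} include controllability and observability, this gives exactly the desired conclusion that $\Sigma$ is state-feedback equivalent to a symmetric realization. Finally, by Proposition on state-feedback equivalence (equivalent realizations have $\GL(\CC^{m+p})$-equivalent Hermann--Martin curves, hence the same center up to the induced linear isomorphism), the property in question does not depend on the choice of minimal realization; and an arbitrary (not necessarily minimal) controllable and observable realization of the same transfer function is state-feedback equivalent to a minimal one, so the conclusion holds for \emph{any} state-space realization as stated.

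The only genuine subtlety — and the step I would be most careful about — is the passage between ``some realization'' and ``any realization.'' One must check that properness of the pole placement map is an invariant of the transfer function (equivalently, of the equivalence class of the Hermann--Martin curve up to $\GL(\CC^4)$), which it is because $\PP Z$ and the Grassmannian transform compatibly under the induced projective linear isomorphism; and one must note that being state-feedback equivalent to a symmetric realization is likewise preserved under state-feedback equivalence, since state-feedback transformations form a group. Everything else is a direct citation: Theorem~\ref{m=2iff} supplies self-adjointness of $Z$, and Proposition~\ref{symmcontr} converts this into the symmetry statement. No new computation is needed; the corollary is essentially a dictionary entry translating Theorem~\ref{m=2iff} into systems-theoretic language.
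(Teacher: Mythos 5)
Your proposal is correct and follows exactly the route the paper intends: the paper offers no separate proof, simply noting that the corollary is a consequence of Theorem~\ref{m=2iff} (every nonempty center disjoint from $\Gr_2\CC^4$ is self-adjoint, since for $m=2$ self-adjointness just means containing a nondegenerate $2$-form) together with Proposition~\ref{symmcontr}. Your extra care about the invariance under change of realization is a reasonable elaboration of what the paper leaves implicit, not a divergence.
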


\begin{corollary}
  \label{maincorcontr3}
  Suppose that $\Sigma$ is a state-space realization \eqref{Eq:State_Space} of a controllable and observable linear system
  with $m=p=3$ whose pole placement map is proper and has degree greater than $1$.
  If the center $Z$ of the pole placement map has dimension at most six, then $\dim Z=6$, and $\Sigma$ is
  state-feedback equivalent to a symmetric realization.  
\end{corollary}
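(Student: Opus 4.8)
The plan is to reduce the corollary to Proposition~\ref{Prop:main} and Proposition~\ref{symmcontr} by translating the systems-theoretic hypotheses into the geometry of linear projections of $\Gr_3\CC^6$. First I would invoke the discussion of Section~\ref{contrsec}: with $m=p=3$ we have $m+p=6$, and the pole placement map $P_\Sigma$ of the realization $\Sigma$ is precisely the linear projection $\pi_Z$ of $\Gr_3\CC^6=\Gr_p\CC^{m+p}$ with center $\PP Z$, where $Z=X_\Sigma^\perp\subset\bwedge^3\CC^6$ is the annihilator of the linear span of the Hermann--Martin curve. The hypothesis that the pole placement map is proper says exactly that $\PP Z$ is nonempty and disjoint from the Grassmannian, so that $\pi_Z$ is defined on all of $\Gr_3\CC^6$ and Proposition~\ref{Prop:main} applies.

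Next I would pin down the dimension of $Z$. Since $\dim Z\le 6$ we have $\dim\PP Z\le 5$. If $\dim\PP Z\le 4$, then part~(1) of Proposition~\ref{Prop:main} (Corollary~\ref{wendy2}) would force $\pi_Z=P_\Sigma$ to have degree $1$, contradicting the hypothesis that its degree exceeds $1$. Hence $\dim\PP Z=5$, that is, $\dim Z=6$, which is the first assertion of the corollary.

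Given $\dim\PP Z=5$, part~(2) of Proposition~\ref{Prop:main} (Theorem~\ref{mainm=3theor}) applies, and since $\pi_Z=P_\Sigma$ has degree greater than $1$ we conclude that the center $Z$ is self-adjoint. Finally, the linear system is controllable and observable by hypothesis, so Proposition~\ref{symmcontr} applies and tells us that $Z$ is self-adjoint if and only if $\Sigma$ is state-feedback equivalent to a symmetric realization; therefore $\Sigma$ is state-feedback equivalent to a symmetric realization, as claimed.

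I do not expect a genuine obstacle here: the mathematical content is entirely contained in Proposition~\ref{Prop:main}, in Proposition~\ref{symmcontr}, and in the identification (recalled in Section~\ref{contrsec}) of the pole placement map with a linear projection of the Grassmannian. The only points requiring a little care are the bookkeeping between $\dim Z$ and $\dim\PP Z$, the observation that ``proper'' is exactly the input hypothesis ``$\PP Z$ nonempty and disjoint from $\Gr_3\CC^6$'' needed for Proposition~\ref{Prop:main}, and checking that controllability and observability are precisely the conditions under which Proposition~\ref{symmcontr} converts self-adjointness of $Z$ into state-feedback equivalence with a symmetric realization.
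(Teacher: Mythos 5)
Your proof is correct and follows exactly the route the paper intends: the paper gives no separate argument for Corollary~\ref{maincorcontr3} beyond stating that it is a consequence of Corollary~\ref{wendy2} and Theorem~\ref{mainm=3theor} (via the identification of $P_\Sigma$ with $\pi_Z$, $Z=X_\Sigma^\perp$) together with Proposition~\ref{symmcontr}. Your bookkeeping between $\dim Z$ and $\dim\PP Z$ and your use of properness to meet the hypotheses of Proposition~\ref{Prop:main} are exactly the intended steps.
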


\section{Linear projections of the Grassmannian}\label{S:GWrM}

For a finite-dimensional vector space $W$, let \defcolor{$W^*$} be its linear dual.
Write \defcolor{$\PP W$} for its projective space of one-dimensional linear subspaces.
Then \defcolor{$\PP W^*$} is identified with the set of hyperplanes in $W$.
For a vector subspace $Z\subset W$, $\PP Z$ is a linear subspace of $\PP W$.
We will often write $Z$ for $\PP Z$, and $\alpha$ for a nonzero vector in $W$, for the linear subspace
$\langle\alpha\rangle$, and for the corresponding point of $\PP W$.
Context will determine which we intend.

Let $m,n$ be positive integers with $m<n$ and let \defcolor{$V$} be an $n$-dimensional complex vector space.
For a proper linear subspace $Z\subsetneq\PP\bwedge^m V$, the \demph{projection} with 
\demph{center} $Z$, 
 \begin{equation}\label{Eq:projection}
  \PP \bwedge^m V\smallsetminus Z\ \longrightarrow\ \PP\left(\bwedge^m V\right)/Z\,,
 \end{equation}
is induced by the quotient map $\bwedge^m V\twoheadrightarrow (\bwedge^m V)/Z$.
This projection is a rational map on $\PP{\bwedge^m}V$ as it is not defined on $Z$.

The Grassmannian \defcolor{$\Gr_m V$} of $m$-dimensional subspaces of $V$ is embedded into $\PP{\bwedge^m}V$ via the
\demph{Pl\"{u}cker embedding} which sends an $m$-dimensional space $\Lambda$ with basis $v_1,\dotsc, v_m$ to the
span of its Pl\"ucker vector $v_1\wedge\cdots\wedge v_m$, written $\Lambda$.
Elements of $\bwedge^mV$ representing points of $\Gr_m V$ are \demph{decomposable}.
Whether we intend $\Lambda\in\Gr_mV$ to be a point of $\PP{\bwedge^m}V$ or a linear subspace of $V$ will often be
determined by context.

Let $Z\subset\PP{\bwedge^m}V$ be a linear subspace disjoint from  $\Gr_m V$.
Write \defcolor{$\pi_Z$} for the restriction of the corresponding linear projection~\eqref{Eq:projection} to $\Gr_m V$.
In~\cite{HSZ2017} such a linear projection was called a {\it generalized Wronski map}, a terminology 
motivated by the following result.

\begin{proposition}[{\cite[Prop.\ 2.3]{HSZ2017}}]
 \label{equiv}
 The Wronski map $\Wr_{L,m}$ of an $n$th order linear ODO $L$ is the projection $\pi_Z$ with
 center $Z=X_L^\perp$, where $X_L$ is defined by \eqref{defspaceK}.
\end{proposition}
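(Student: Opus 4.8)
The plan is to express the Wronskian of an $m$-dimensional subspace as the natural pairing of its Pl\"ucker vector against an explicit curve in $\bwedge^m V_L^*$, and then to recognize this pairing as the linear projection with center $X_L^\perp$.

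First I would record that $c(t)\in V_L^*$ from~\eqref{ceq} does not depend on the chosen basis: for any $v\in V_L$ one computes $c(t)(v)=v(t)$, so $c(t)$ is the evaluation functional $\mathrm{ev}_t$, and likewise $c^{(j)}(t)$ is the functional $v\mapsto v^{(j)}(t)$; in particular $X_L$ of~\eqref{defspaceK} is an intrinsically defined subspace of $\bwedge^m V_L^*$. Now fix $\Lambda\in\Gr_m V_L$ with basis $g_1,\dots,g_m$. Using the standard perfect pairing $\bwedge^m V_L\times\bwedge^m V_L^*\to\CC$ given by $\langle v_1\wedge\cdots\wedge v_m,\ \xi_1\wedge\cdots\wedge\xi_m\rangle=\det(\xi_j(v_k))$ together with the identity $c^{(j)}(t)(g_k)=g_k^{(j)}(t)$, one gets
\[
  \bigl\langle\, g_1\wedge\cdots\wedge g_m,\ c(t)\wedge c'(t)\wedge\cdots\wedge c^{(m-1)}(t)\,\bigr\rangle
  \ =\ \det\bigl(g_k^{(j)}(t)\bigr)_{\substack{1\le k\le m\\ 0\le j\le m-1}}
  \ =\ \Wr(g_1,\dots,g_m)(t)\,.
\]
Thus the Wronskian of $\Lambda$, viewed as a function of $t$, is the pairing of the Pl\"ucker vector $\Lambda\in\bwedge^m V_L$ with the curve $t\mapsto c(t)\wedge c'(t)\wedge\cdots\wedge c^{(m-1)}(t)$ in $\bwedge^m V_L^*$.

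Second I would identify the target. An element $\phi$ of $X_L^*=(\bwedge^m V_L)/X_L^\perp$ is a linear functional on $X_L$, and since $X_L$ is spanned by the vectors $c(t)\wedge\cdots\wedge c^{(m-1)}(t)$, the assignment $\phi\mapsto\bigl(t\mapsto\phi(c(t)\wedge\cdots\wedge c^{(m-1)}(t))\bigr)$ is injective: a functional vanishing on a spanning set is zero. This realizes $X_L^*$ as a finite-dimensional subspace of $C^\infty(I)$, and under it the quotient map $\bwedge^m V_L\twoheadrightarrow X_L^*$ carries $\omega$ to the function $t\mapsto\langle\omega,c(t)\wedge\cdots\wedge c^{(m-1)}(t)\rangle$. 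Combining with the computation above, the composite $\Gr_m V_L\xrightarrow{\pi_Z}\PP X_L^*\hookrightarrow\PP C^\infty(I)$ with $Z=X_L^\perp$ sends $\Lambda$ to the line spanned by the function $\Wr(g_1,\dots,g_m)(t)$, which is exactly $\Wr_{L,m}(\Lambda)$. This is the precise sense in which $\Wr_{L,m}$ \emph{is} the projection $\pi_Z$.

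It remains to check that $\pi_Z$ is defined on all of $\Gr_m V_L$, i.e.\ that $\PP Z$ is disjoint from the Grassmannian. A nonzero decomposable vector $\Lambda=g_1\wedge\cdots\wedge g_m$ (with $g_1,\dots,g_m$ linearly independent) lies in $X_L^\perp$ precisely when $\langle\Lambda,c(t)\wedge\cdots\wedge c^{(m-1)}(t)\rangle$ vanishes for all $t$, which by the displayed identity says $\Wr(g_1,\dots,g_m)\equiv 0$; this is impossible by the classical fact recalled in the introduction, so $\PP X_L^\perp\cap\Gr_m V_L=\emptyset$. The only genuine subtlety is the bookkeeping of the third paragraph --- making precise that "$\Wr_{L,m}$ is $\pi_Z$" means the two maps agree once the unwieldy target $\PP C^\infty(I)$ is replaced by the finite-dimensional $\PP X_L^*$ via the canonical embedding above; the rest is just the determinant identity of the first step together with the spanning definition of $X_L$.
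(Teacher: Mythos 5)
Your proof is correct and is essentially the standard argument: the paper itself does not reprove this statement but imports it from \cite[Prop.~2.3]{HSZ2017}, and your computation (the pairing of the Pl\"ucker vector of $\Lambda$ against $c(t)\wedge\cdots\wedge c^{(m-1)}(t)$ yielding the Wronskian, plus the identification of $X_L^*$ with a subspace of $C^\infty(I)$) is exactly the content indicated on pp.~755--6 of that reference and in the paper's surrounding discussion. Your final disjointness check is likewise the observation the paper records separately as Remark~\ref{doesnotmeet}.
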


\begin{remark} 
\label{doesnotmeet}
Note that  $X_L^\perp$ is disjoint from the Grassmannian $\Gr_mV_L$.
This is because Wronskians are not identically zero and  the formulation \eqref{ceq}.\hfill$\diamond$
\end{remark}

Assume that $\dim V=2m$.
A 2-form $\sigma\in\bwedge^2V$ is an element of the tensor space $V\otimes V$.
It is a linear map $V^*\to V$ which is given by contraction, $v\mapsto v\lrcorner\,\sigma$.
The \demph{rank} of $\sigma$ is its rank as a linear map, and this is an even integer.
When $\sigma$ has rank $2m$, it is a symplectic form on $V^*$.
Then corresponding map $V^*\to V$ is an isomorphism and $\sigma$ induces a symplectic form
$\defcolor{\sigma^*}\in\bwedge^2 V^*$  on $V$.
The \demph{skew-orthogonal complement} to $\Lambda\in\Gr_mV$ is the linear subspace
\[
  \defcolor{\Lambda^\angle}\ :=\ \{w\in V\mid \sigma^*(w,v)=0\quad\forall v\in\Lambda\}\,.
\]
This also has dimension $m$, so $\Lambda^\angle\in\Gr_mV$.

%
%
A linear subspace $Z\subset\PP{\bwedge^m}V$ 
is \demph{self-adjoint} if there exists a symplectic form $\sigma$ on $V^*$ such that 
 \begin{equation}
  \label{sup2}
   Z\ \supseteq\  \PP\bigl(\CC \sigma\wedge \bwedge^{m-2} V\bigr)\,.
 \end{equation}

By \cite[Cor.\ 1.5]{HSZ2017}, 
 \begin{equation}
  \label{mainidgen}
    \pi_Z (\Lambda^\angle)\ =\ \pi_Z(\Lambda)\,, \quad \forall \Lambda \in \Gr_m V\,,
 \end{equation}
Thus when $Z$ is self-adjoint, the degree of $\pi_Z$ is even and hence exceeds $1$.
We address the converse:
{\it Does degree of $\pi_Z$ exceeding $1$ imply that the center $Z$ is self-adjoint?}  

\subsection{Projection from a point} 
\label{smalldimsec}
Let \defcolor{$\pi_\omega$} be the linear projection with center $\omega\in\PP{\bwedge^m}V$.

\begin{lemma}
\label{dim1lem}
 Suppose that $Z\subset\PP{\bwedge^m}V$ is a linear subspace disjoint from the Grassmannian $\Gr_m V$.
 For $\Lambda,\Lambda'\in \Gr_m V$, we have $\pi_Z(\Lambda)=\pi_Z(\Lambda')$ if and only if there exists a point
 $\omega\in Z$ such that $\pi_{\omega}(\Lambda)=\pi_{\omega}(\Lambda')$ if and only if
 $Z$ meets the line $\langle\Lambda,\Lambda'\rangle$ in $\PP{\bwedge^m}V$ containing the points $\Lambda,\Lambda'$.
\end{lemma}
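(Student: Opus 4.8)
The plan is to reduce all three conditions to a single geometric statement about the line $\ell:=\langle\Lambda,\Lambda'\rangle$, using only the mechanics of linear projection together with the hypothesis that $\PP Z$ misses $\Gr_m V$. Fix decomposable vectors $v,v'\in\bwedge^m V$ representing $\Lambda,\Lambda'$. We may assume $\Lambda\neq\Lambda'$, since otherwise $\langle\Lambda,\Lambda'\rangle$ is undefined and all three assertions are trivial; then $v,v'$ are linearly independent and $\ell=\PP\langle v,v'\rangle$. Since $Z$ contains no nonzero decomposable vector, $v,v'\notin Z$, and likewise $v,v'\notin\langle\omega\rangle$ for any $\omega\in Z$ (equivalently $\omega\neq\Lambda,\Lambda'$, so $\pi_\omega$ is defined at $\Lambda$ and $\Lambda'$).

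First I would prove that $\pi_Z(\Lambda)=\pi_Z(\Lambda')$ if and only if $Z$ meets $\ell$. By definition this equality says that the images $v+Z$ and $v'+Z$ in $(\bwedge^m V)/Z$ are proportional; both are nonzero because $v,v'\notin Z$, so it is equivalent to the existence of $(a,b)\neq(0,0)$ with $av+bv'\in Z$. Such a vector lies in $\langle v,v'\rangle\cap Z$ and is nonzero, since $av+bv'=0$ would force $a=b=0$ by linear independence of $v,v'$; conversely every nonzero element of $\langle v,v'\rangle\cap Z$ has this form. Hence $\pi_Z(\Lambda)=\pi_Z(\Lambda')$ holds exactly when $\langle v,v'\rangle\cap Z\neq 0$, i.e. when $Z$ meets $\ell$. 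Next I would show $Z$ meets $\ell$ if and only if some $\omega\in Z$ satisfies $\pi_\omega(\Lambda)=\pi_\omega(\Lambda')$. If $\omega=\langle w\rangle\in Z\cap\ell$, write $w=av+bv'$; neither $a$ nor $b$ vanishes, as $a=0$ would give $\langle w\rangle=\langle v'\rangle\subset Z$, contradicting $v'\notin Z$, and similarly for $b$. Then $v\equiv-(b/a)v'\pmod{\langle w\rangle}$, and since $\pi_\omega$ is defined at $\Lambda,\Lambda'$ this yields $\pi_\omega(\Lambda)=\pi_\omega(\Lambda')$. Conversely, given $\omega=\langle w\rangle\in Z$ with $\pi_\omega(\Lambda)=\pi_\omega(\Lambda')$, there is $(a,b)\neq(0,0)$ with $av+bv'=cw$ for a scalar $c$; independence of $v,v'$ forces $c\neq 0$, so $w\in\langle v,v'\rangle$ and $\omega\in Z\cap\ell$. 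Combining the two equivalences proves the lemma. (Alternatively, the implication from the $\pi_\omega$ condition to the $\pi_Z$ condition also follows formally from the factorization of $\pi_Z$ through $\pi_\omega$ whenever $\omega\in Z$, but the route through $\ell$ is self-contained.)

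There is no serious obstacle: the statement is essentially a formal consequence of the definition of linear projection. The only places the standing hypothesis $\PP Z\cap\Gr_m V=\emptyset$ is used are the nonvanishing claims — that $v,v'$, and hence the combinations $av+bv'$ extracted above, do not lie in $Z$ or in $\langle\omega\rangle$ — and the fact that $\pi_\omega$ is defined at $\Lambda$ and $\Lambda'$, which holds because a point of $Z$ cannot be $\Lambda$ or $\Lambda'$. One should also note the degenerate case $\Lambda=\Lambda'$ separately, or simply regard it as excluded by the statement's reference to "the line $\langle\Lambda,\Lambda'\rangle$".
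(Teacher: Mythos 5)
Your proof is correct and follows essentially the same route as the paper's: one direction via the factorization of $\pi_Z$ through $\pi_\omega$ (or equivalently through the line $\langle\Lambda,\Lambda'\rangle$ meeting $Z$), and the converse by picking a point of $\langle\Lambda,\Lambda'\rangle\cap Z$. You simply spell out in coordinates the nonvanishing claims that the paper leaves implicit, and your handling of the degenerate case $\Lambda=\Lambda'$ is a reasonable addition.
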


\begin{proof}
  If  $\pi_{\omega}(\Lambda)=\pi_{\omega}(\Lambda')$, then for any subspace $Z$ containing $\omega$,
  $\pi_Z(\Lambda)=\pi_Z(\Lambda')$.
  For the other direction, suppose that $\pi_Z(\Lambda)=\pi_Z(\Lambda')$ with $\Lambda\neq\Lambda'$ in $\Gr_mV$.
  Then the line $\langle\Lambda,\Lambda'\rangle$ they span meets $Z$.
  If $\omega\in\langle\Lambda,\Lambda'\rangle\cap Z$, then $\pi_{\omega}(\Lambda)=\pi_{\omega}(\Lambda')$.
\end{proof}

For a center $Z\subset\PP{\bwedge^m}V$ disjoint from the Grassmannian $\Gr_mV$, define
 \begin{equation}
  \label{SR}
   \defcolor{\calS_Z}\ :=\ 
   \{\Lambda\in\Gr_m(V) \mid \exists \Lambda'\neq\Lambda \text{ such that } \pi_Z(\Lambda)=\pi_Z(\Lambda')\}\,,
 \end{equation}
and for $\omega\in\PP{\bwedge^m}V$, similarly define $\defcolor{\calS_\omega}$.
Lemma~\ref{dim1lem} is equivalent to 
 \begin{equation}
  \label{SR1}
   \calS_Z\ =\ \bigcup_{\omega \in Z} \calS_{\omega}\,.
 \end{equation}
%

\begin{remark}
\label{schemerem}
 Lemma~\ref{dim1lem} motivates our approach to study the degree of the map $\pi_Z$.
 First, for each $\omega\in Z$, describe all $\Lambda\in \Gr_mV$ such that there exist $\Lambda'\neq\Lambda$ in $\Gr_mV$
 with $\pi_\omega(\Lambda) = \pi_\omega(\Lambda')$.
 Then take a union of all such $\Lambda$ for $\omega\in Z$.
 If this union does not contain an open dense set of $\Gr_mV$ then $\pi_Z$  has degree $1$. 
 
 The group $\GL(V)$ of invertible linear transformations on $V$ acts on $\Gr_mV$ and $\PP{\bwedge^m}V$, and for
 $\omega\in\PP{\bwedge^m}V$, $\Lambda,\Lambda'\in\Gr_mV$, and $g\in\GL(V)$, we have
\[
   \pi_{\omega}(\Lambda)=\pi_{\omega}(\Lambda')
   \qquad\mbox{if and only if}\qquad
   \pi_{g.\omega}(g.\Lambda)=\pi_{g.\omega}(g.\Lambda')\,.
\]
 Therefore, to find the set of pairs  $\Lambda,\Lambda'\in\Gr_m(V)$
 with the same image under $\pi_{\omega}$ it is enough to find this set for one representative of the $\GL(V)$-orbit of
 $\omega$.\hfill$\diamond$
\end{remark}

\begin{remark}\label{Rem:G24}
  Suppose that $\dim V=4$.
  The Grassmannian $\Gr_2V\subset\PP{\bwedge^2}V\simeq\PP^5$ is a quadratic hypersurface.
  Thus, if $\omega\in\PP\bwedge^2V\smallsetminus\Gr_2V$, then
  $\pi_\omega\colon\Gr_2V\to\PP({\bwedge^2}V)/\omega\simeq\PP^4$ has degree two.
  In particular, $\calS_\omega\subset\Gr_2V$ is dense and therefore has dimension four.

  This will be relevant in Section~\ref{Sec:centres}, where we show that for
  $\omega\in\PP{\bwedge^3}\CC^6\smallsetminus\Gr_3\CC^6$, either $\calS_\omega$ is either zero-dimensional, empty, or
  four-dimensional, and the last case may be understood to be a consequence of the projection map on $\Gr_2V$.
  \hfill$\diamond$
\end{remark}

This degree two projection $\Gr_2V\to\PP^4$ is intrinsically related to symplectic structures.

\begin{theorem}
  \label{m=2iff}
  When $\dim V=4$, any $\sigma\in\PP{\bwedge^2}V\smallsetminus\Gr_2V$ is a symplectic form on $V^*$.
  For $\Lambda,\Lambda'\in\Gr_2V$ with $\Lambda\neq\Lambda'$, we have that 
  $\pi_\sigma(\Lambda)=\pi_\sigma(\Lambda')$ if and only if $\Lambda'=\Lambda^\angle$, 
  the skew-orthogonal complement of $\Lambda$ with respect to the symplectic form $\sigma^*$.
\end{theorem}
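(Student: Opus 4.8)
The plan is to work in coordinates. Since $\dim V = 4$, we have $\bwedge^4 V \cong \CC$, and the wedge product gives a nondegenerate symmetric bilinear form $q$ on $\bwedge^2 V$, namely $q(\alpha,\beta)$ defined by $\alpha\wedge\beta = q(\alpha,\beta)\cdot\mathrm{vol}$ for a fixed volume form. The Grassmannian $\Gr_2 V$ is precisely the quadric $\{q(\alpha,\alpha)=0\}$, since $\alpha$ is decomposable if and only if $\alpha\wedge\alpha=0$. So $\sigma\notin\Gr_2 V$ means $q(\sigma,\sigma)\neq 0$, which says exactly that the contraction map $V^*\to V$, $v\mapsto v\lrcorner\,\sigma$, is an isomorphism (its $4\times 4$ matrix has nonzero Pfaffian $\propto q(\sigma,\sigma)$); this is the first assertion.

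Next I would identify the fibers of $\pi_\sigma$. The projection $\pi_\sigma$ has center the point $\sigma$, so by Lemma~\ref{dim1lem}, for distinct $\Lambda,\Lambda'\in\Gr_2 V$ we have $\pi_\sigma(\Lambda)=\pi_\sigma(\Lambda')$ if and only if $\sigma$ lies on the line through $\Lambda$ and $\Lambda'$ in $\PP\bwedge^2 V$, i.e. $\sigma = a\Lambda + b\Lambda'$ for some scalars $a,b$ (necessarily both nonzero, as $\sigma\notin\Gr_2 V$). The key computational step is then: intersect this line with the quadric $\{q(x,x)=0\}=\Gr_2 V$. Writing $x = a\Lambda+b\Lambda'$ and using $q(\Lambda,\Lambda)=q(\Lambda',\Lambda')=0$, the quadratic equation $q(x,x)=0$ becomes $2ab\,q(\Lambda,\Lambda')=0$. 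For this to have the two solutions $[a:b]=[1:0]$ and $[0:1]$ (the points $\Lambda,\Lambda'$) as the \emph{only} intersection points — which must happen, since a line not contained in a quadric meets it in at most two points and here it meets it in $\Lambda$ and $\Lambda'$ — we need $q(\Lambda,\Lambda')=0$, i.e. $\Lambda\wedge\Lambda'=0$.

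It remains to translate $\Lambda\wedge\Lambda'=0$ into the statement $\Lambda' = \Lambda^\angle$. I would first record the reverse implication needed for a genuine ``if and only if'' of fibers: given $\Lambda$, the relation $q(\Lambda,\Lambda')=0$ together with $q(\Lambda',\Lambda')=0$ and $q(\Lambda,\Lambda)=0$ forces the whole line $\langle\Lambda,\Lambda'\rangle$ onto the quadric, hence $\sigma\notin\Gr_2V$ lies on it and $\pi_\sigma(\Lambda)=\pi_\sigma(\Lambda')$; but one must check $\sigma$ actually lies on \emph{this particular} line, so I instead argue directly that $\Lambda^\angle$ is the unique point $\Lambda'\neq\Lambda$ with $\pi_\sigma(\Lambda)=\pi_\sigma(\Lambda')$ and that it does have this property. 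Concretely: if $e_1,e_2$ is a basis of $\Lambda$, then $\Lambda = e_1\wedge e_2$, and $\Lambda^\angle = \{w : \sigma^*(w,e_1)=\sigma^*(w,e_2)=0\}$. Choose $f_1,f_2$ spanning $\Lambda^\angle$; a standard Pfaffian/contraction identity in $\dim 4$ gives $\sigma = \lambda\, e_1\wedge e_2 + \mu\, f_1\wedge f_2$ with $\mu\neq 0$ (this is the normal form: $\sigma$ and its ``symplectic complement structure'' decompose $V$ as $\Lambda\oplus\Lambda^\angle$ when $\Lambda$ is not isotropic for $\sigma^*$ — and $\Lambda$ is never $\sigma^*$-isotropic since $\sigma^*$ restricted to a $2$-plane $\Lambda$ is a $2$-form, automatically degenerate, so ``isotropic'' would mean $\sigma^*|_\Lambda=0$; I should handle whether $\lambda$ can vanish, noting $\lambda=0$ would still put $\sigma$ on the line through $\Lambda$ and $\Lambda^\angle$). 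Hence $\sigma\in\langle\Lambda,\Lambda^\angle\rangle$, so $\pi_\sigma(\Lambda)=\pi_\sigma(\Lambda^\angle)$; combined with the uniqueness from the two-points-on-a-line argument, this finishes the proof.

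The main obstacle I anticipate is the bookkeeping around degenerate cases in the last paragraph: making sure that $\Lambda^\angle\neq\Lambda$, that the coefficient of $\Lambda^\angle$ in the normal form of $\sigma$ is nonzero, and that $\sigma$ genuinely lies on the line $\langle\Lambda,\Lambda^\angle\rangle$ rather than merely $q(\Lambda,\Lambda^\angle)=0$ — equivalently, showing the decomposition $\sigma = \lambda\,\Lambda + \mu\,\Lambda^\angle$ holds with appropriate nonvanishing. All of this is forced by $q(\sigma,\sigma)\neq 0$, but it needs to be written carefully; the quadric geometry (a line meets a smooth quadric in $\leq 2$ points, with equality unless tangent or contained) is what does the real work and should be invoked explicitly.
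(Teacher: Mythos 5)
Your overall framework (the wedge pairing $q$ on $\bwedge^2V$, the identification $\Gr_2V=\{q(x,x)=0\}$, Lemma~\ref{dim1lem}, and the fact that a line meets a quadric in at most two points unless it is contained in it) is sound and is essentially the geometry the paper leans on in Remark~\ref{Rem:G24}. However, your ``key computational step'' is stated backwards, and as written it would derail the proof. Restricting $q$ to the line gives $q(a\Lambda+b\Lambda',a\Lambda+b\Lambda')=2ab\,q(\Lambda,\Lambda')$. If $q(\Lambda,\Lambda')=0$ this vanishes identically, so the \emph{entire line} $\langle\Lambda,\Lambda'\rangle$ lies on the quadric $\Gr_2V$; since $\sigma$ lies on that line but off the Grassmannian, the conclusion you must draw is $q(\Lambda,\Lambda')\neq 0$, i.e.\ $\Lambda\wedge\Lambda'\neq 0$, i.e.\ $\Lambda\cap\Lambda'=0$ --- the opposite of what you wrote. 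Your stated condition $\Lambda\wedge\Lambda'=0$ is in fact \emph{incompatible} with the desired conclusion: when $\Lambda^\angle\neq\Lambda$ the restriction $\sigma^*|_\Lambda$ is nondegenerate, so $\Lambda\cap\Lambda^\angle=\ker(\sigma^*|_\Lambda)=0$ and $\Lambda\wedge\Lambda^\angle\neq0$. Hence the sentence ``it remains to translate $\Lambda\wedge\Lambda'=0$ into $\Lambda'=\Lambda^\angle$'' cannot be carried out. Note also that $q(\Lambda,\Lambda')\neq0$ alone does not determine $\Lambda'$: to pin it down one solves $q(\sigma-a\Lambda,\sigma-a\Lambda)=q(\sigma,\sigma)-2a\,q(\sigma,\Lambda)=0$, which has a unique solution $a$ (hence a unique decomposable point $\neq\Lambda$ on any line through $\sigma$ and $\Lambda$) exactly when $q(\sigma,\Lambda)\neq0$.

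The second genuine error is the parenthetical claim that ``$\Lambda$ is never $\sigma^*$-isotropic.'' A $2$-form on a $2$-plane is either zero or nondegenerate, and Lagrangian planes ($\sigma^*|_\Lambda=0$, equivalently $\Lambda=\Lambda^\angle$, equivalently $q(\sigma,\Lambda)=0$) certainly exist; they form the ramification locus of the degree-$2$ map $\pi_\sigma$. For such $\Lambda$ the theorem asserts there is \emph{no} $\Lambda'\neq\Lambda$ with $\pi_\sigma(\Lambda)=\pi_\sigma(\Lambda')$, and this must be proved (it follows from $q(\sigma,\Lambda)=0$ making the linear equation above unsolvable), not dismissed via a false claim. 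Your final direct argument --- the $\sigma^*$-orthogonal splitting $V=\Lambda\oplus\Lambda^\angle$ giving $\sigma=\lambda\,\Lambda+\mu\,\Lambda^\angle$ with $\lambda\mu\neq0$, combined with the uniqueness of the second intersection point of the line with the quadric --- is correct for non-Lagrangian $\Lambda$ and, once the two issues above are repaired, does yield the theorem; alternatively, the ``if'' direction is immediate from \eqref{mainidgen} together with the degree-$2$ statement of Remark~\ref{Rem:G24}.
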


%
%

\subsection{Projection from a point when \texorpdfstring{$m=3$}{m=3} and \texorpdfstring{$n=6$}{n=6}}
\label{Sec:centres}
Assume that $\dim V=6$.
When convenient, we identify $V$ with $\CC^6$ with the standard basis $\{e_1,\dotsc,e_6\}$ and let
$\{e_1^*,\dotsc,e_6^*\}$ be the dual basis for $V^*$.
Following Remark \ref{schemerem}, we first study the action of $\GL(V)$ on $\PP{\bwedge^3}V$.
The orbits under this action were described by Segre in $1918$~\cite{segre}.
For $i,j,k$, write \defcolor{$e_{ijk}$} for $e_i\wedge e_j\wedge e_k$ and
\defcolor{$e_{ij}$} for $e_i\wedge e_j$.
Then $e_{123}$ is the Pl\"ucker vector of $\langle e_1,e_2,e_3\rangle$.

\begin{theorem} [Segre \cite{segre}, see also \cite{donagi}]
\label{segrethm}
 The action of $\GL(V)$  on $\PP{\bwedge^3}V$ has four orbits $O_0, O_1, O_5$, and $O_{10}$, where $O_i$ has codimension
 $i$. 
 A normal form for an element $\omega_i\in O_i$ of each orbit is as follows.
\begin{enumerate}
\item $\omega_0=e_{123}+e_{456}$,  a point on the line between $e_{123}$ and $e_{456}$.
  
\item $\omega_1=e_{126}-e_{153}+e_{234}$, a general point in the tangent space to  $\Gr_3V$ at $e_{123}$.
  %
  %
  
\item $\omega_5=e_1\wedge(e_{23}+e_{45})$, a point on the line between  $e_{123}$ and $e_{145}$.
  
\item $\omega_{10}=e_{123}$, a point on the Grassmannian $\Gr_3V$. 
\end{enumerate}
\end{theorem}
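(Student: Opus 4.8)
The plan is to prove Segre's classification directly by a hands-on analysis of the $\GL(V)$-action on $\PP\bwedge^3 V$, producing the four normal forms and computing codimensions, since this is a self-contained statement about a $20$-dimensional representation of a $36$-dimensional group. First I would fix $\omega\in\bwedge^3 V$ and extract its most basic invariant: the \emph{support} $U_\omega\subseteq V$, defined as the smallest subspace such that $\omega\in\bwedge^3 U_\omega$, equivalently the image of the contraction map $\bwedge^2 V^*\to V$, $\eta\mapsto\eta\lrcorner\,\omega$. Since $\GL(V)$ acts transitively on subspaces of a given dimension, $\dim U_\omega\in\{3,4,5,6\}$ is a coarse invariant and I would stratify by it. If $\dim U_\omega=3$ then $\bwedge^3 U_\omega$ is a line, so $\omega$ is (a scalar multiple of) a decomposable vector, and all such are $\GL(V)$-equivalent to $e_{123}$; this is the orbit $O_{10}$ of the Grassmannian, which is the cone over $\Gr_3 V$ and has dimension $\dim\Gr_3 V+1=10$, hence codimension $10$.

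Next I would handle $\dim U_\omega=4$. Here $\omega\in\bwedge^3 U_\omega\cong\bwedge^1 U_\omega^*\otimes(\text{line})$ via the canonical isomorphism $\bwedge^3 W\cong\bwedge^1 W^*\otimes\bwedge^4 W$ for $4$-dimensional $W$; thus $\omega$ corresponds to a nonzero vector in $U_\omega^*$ (nonzero precisely because the support is all of $U_\omega$), and $\GL(U_\omega)$ acts transitively on such vectors. So there is a single orbit with $U_\omega$ four-dimensional, and I would pick the representative $e_4\wedge(e_{12}+\ldots)$; more cleanly one checks $\omega=e_{124}+e_{134}$ works, but the paper's chosen normal form $\omega_5=e_1\wedge(e_{23}+e_{45})$ has support $\langle e_1,e_2,e_3,e_4,e_5\rangle$, so this form actually belongs to the $\dim U_\omega=5$ case — I would instead note that $\omega_5$ contracted appropriately has a one-dimensional "kernel-like" locus and place it correctly. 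To compute the codimension of $O_5$ I would compute the stabilizer in $\GL(V)$: a dimension count of the stabilizer of $\omega_5$ (it preserves the flag structure visible in $e_1\wedge(e_{23}+e_{45})$ — namely the line $\langle e_1\rangle$ and the symplectic form $e_{23}+e_{45}$ on the quotient) gives stabilizer dimension $16$, so $\dim O_5 = 36-16 = 20-5=15$, confirming codimension $5$. I would similarly verify $O_1$: the form $\omega_1=e_{126}-e_{135}+e_{234}$ (note $-e_{153}=e_{135}$ up to sign) is a generic tangent vector to $\Gr_3 V$ at $e_{123}$, has full support $V$, and its stabilizer has dimension $15$ (it is essentially $\GL_3$ acting diagonally plus a few unipotent directions), giving $\dim O_1=21$... here I must be careful: codimension $1$ means $\dim O_1=19$, so stabilizer dimension $17$; I would recompute. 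And $\omega_0=e_{123}+e_{456}$ has stabilizer $(\GL_3\times\GL_3)\rtimes\ZZ/2$ of dimension $18$, giving $\dim O_0=18$, which is wrong for the open orbit — so in fact $\omega_0$ is \emph{not} generic; I would need to either correct the normal form or recognize that $O_0$ is the codimension-$0$ orbit only if I've miscounted, and more likely the open orbit representative is $\omega_0=e_{123}+e_{456}$ with a $\ZZ/3\times\ZZ/3$ or $\SL_3\times\SL_3$-type stabilizer, or better yet I should use the known fact that $\bwedge^3\CC^6$ under $\SL_6$ is a \emph{prehomogeneous vector space} (Sato–Kimura) with generic stabilizer $\SL_3\times\SL_3$ of dimension $16$, so $\dim O_0=35-16+1=20$, i.e.\ $O_0$ is open and dense — codimension $0$, consistent.

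The cleanest route, which I would actually follow, is therefore: (1) establish the support invariant and dispose of $\dim U_\omega=3,4$ quickly as above, yielding $O_{10}$ and (one orbit toward) the others; (2) for $\dim U_\omega=5$, observe $\omega\in\bwedge^3 U_\omega$ with $U_\omega\cong\CC^5$, and $\bwedge^3\CC^5\cong\bwedge^2(\CC^5)^*$ canonically, so $\omega$ corresponds to a $2$-form of rank $2$ or $4$ on $(\CC^5)^*$; rank $4$ is the only possibility compatible with support being exactly $5$-dimensional, giving exactly one orbit, represented by $\omega_5$; (3) for $\dim U_\omega=6$, invoke (or prove by a direct stabilizer computation) that the remaining locus splits into exactly two orbits — the open orbit $O_0$ and the codimension-$1$ orbit $O_1$ — distinguished by the invariant that Sato–Kimura associate to this prehomogeneous space, namely the degree-$4$ hyperdeterminant $\Delta(\omega)\in\bwedge^6 V^{\otimes ?}$ vanishing on $O_1$ and not on $O_0$; (4) compute each codimension via the dimension of the stabilizer, checking $\dim\GL(V)-\dim\mathrm{Stab}(\omega_i)=\dim O_i$ and that the tangent space $\mathfrak{gl}(V)\cdot\omega_i$ has the right dimension. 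The main obstacle is step (3)–(4): showing the $\dim U_\omega=6$ locus is \emph{exactly} two orbits and computing their codimensions honestly, since this requires either the prehomogeneous-space machinery or a careful explicit reduction of a generic full-support $3$-form to one of the two normal forms by row/column operations — the latter is a genuine computation (reducing $\omega$ to $e_{123}+\lambda e_{456}$ plus correction terms and showing $\lambda$ can be normalized or the extra terms absorbed). Since the paper cites Segre and Donagi, I would ultimately \emph{cite} these for the classification itself and only include the support-invariant discussion and the codimension/stabilizer dimension counts that we actually need downstream, rather than reproving Segre from scratch.
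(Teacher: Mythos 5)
The paper does not prove this theorem: it is stated as a classical result and attributed to Segre, with Donagi as a modern reference, so your ultimate decision to cite those sources for the classification itself is exactly what the paper does. What the paper actually uses downstream is the list of normal forms, the geometric description of the orbits in terms of $\mathcal T\Gr_3V$ and $\mathcal T_1$, and the codimensions, all of which it imports from the literature.

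That said, the sketch you give en route contains concrete errors that would sink it if it were meant to stand on its own. First, the stratum $\dim U_\omega=4$ is empty: for a $4$-dimensional $W$, every element of $\bwedge^3W$ is decomposable (as you yourself observe, $\bwedge^3W\cong W^*\otimes\bwedge^4W$, and the $3$-form corresponding to $\eta\in W^*$ is a volume form on $\ker\eta$), so its support is $3$-dimensional, not $4$-dimensional; in particular your candidate $e_{124}+e_{134}=e_1\wedge(e_2+e_3)\wedge e_4$ is decomposable and lies in the cone over $\Gr_3V$. You half-notice the problem but never draw the correct conclusion, and your summary still promises an orbit coming from the $\dim U_\omega=4$ case. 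Second, several stabilizer dimensions are wrong as first written: the stabilizer in $\GL(V)$ of the vector $\omega_5$ has dimension $21$ (affine orbit of dimension $15$ in the $20$-dimensional space $\bwedge^3V$, hence codimension $5$), not $16$; and the stabilizer of $e_{123}+e_{456}$ is cut out of $\GL_3\times\GL_3$ by the two conditions $\det A=\det B=1$, so its identity component is $\mathrm{SL}_3\times\mathrm{SL}_3$ of dimension $16$, not $18$ --- your alarm that $O_0$ comes out too small is caused by forgetting these determinant conditions, not by a wrong normal form. You should also fix, once and for all, whether you are counting affine orbits in $\bwedge^3V$ or projective orbits in $\PP{\bwedge^3}V$; the chain ``$36-16=20-5=15$'' mixes the two and is false as arithmetic. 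Third, the genuinely hard step --- that the full-support locus consists of exactly two orbits, separated by the quartic invariant --- is only gestured at via Sato--Kimura and never carried out. None of this matters for the paper, since the theorem is cited rather than proved, but a self-contained proof would require repairing the support-$4$ analysis, redoing the stabilizer counts consistently, and supplying the two-orbit argument for forms of full support.
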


\begin{remark}
  \label{tangentrem}
  For a 3-plane $\Lambda\in\Gr_3V$, the tangent space $T_\Lambda \Gr_3V$ to the Grassmannian is
  $\mathrm{Hom}(\Lambda, V/\Lambda)$.
  A general point of $T_\Lambda \Gr_3V$ corresponds to an isomorphism $\Lambda\xrightarrow{\sim}V/\Lambda$.
  The normal form in  Theorem~\ref{segrethm}(2) is the point of
  $T_{e_{123}}\Gr_3V$ corresponding to the isomorphism that sends $e_i$ to $e_{i+3}\mod\langle e_1,e_2,e_3\rangle$.
  It is the tangent vector at $t=0$ to the curve 
 \begin{equation}
 \label{123curve}
    \Lambda(t)\ =\ e_1(t)\wedge e_2(t)\wedge e_3(t)\,,
 \end{equation}
 where $e_i(t)=e_i+te_{i+3}$ for $i=1,2,3$.\hfill$\diamond$
\end{remark}

\begin{remark}
\label{orbgeom}
 The tangent variety \defcolor{$\mathcal T X$} of a projective variety $X\subset\PP^N$ is the union of all lines tangent to
 $X$. 
 The orbits from Theorem \ref{segrethm} are described geometrically as follows.
 \begin{enumerate}
 \item
 The orbit $O_0$ is the complement of the tangent variety  $\mathcal T \Gr_3V$ of $\Gr_3V\subset \PP{\bwedge^3}V$.\smallskip
 
 \item
   Let $\mathcal T_1$ be the union of all lines in $\PP{\bwedge^3}V$ connecting two points
   in $\Gr_3V$ whose corresponding subspaces in $V$ have nonzero intersection.
   Then
   \[
     \Gr_3V\ \subset\  \mathcal T_1\ \subset\ \mathcal T \Gr_3V\,,
   \] 
   and  $O_1$ is the complement of  $\mathcal T_1$ in $\mathcal T\Gr_3V$.\smallskip

 \item
 The orbit $O_5$ is the complement of  $\Gr_3V$ in $\mathcal T_1$.

  \item  The orbit $O_{10}$ is $\Gr_3V$.\hfill$\diamond$
\end{enumerate}
\end{remark}

We describe $\calS_\omega$ for $\omega\in\PP{\bwedge^3}V\smallsetminus\Gr_3V$.

\begin{proposition}
 \label{orbitpairprop}
 Let $\omega\in\PP{\bwedge^3}V\smallsetminus\Gr_3V$.
 Then 
 \begin{enumerate}
 \item If $\omega\in O_0$, then $\calS_\omega$ is finite and $\dim \calS_\omega=0$
 \item If $\omega\in O_1$, then $\pi_\omega$ is injective, so that $\calS_\omega=\emptyset$.
 \item If $\omega\in O_5$, then $\dim \calS_\omega=4$. 
\end{enumerate}
\end{proposition}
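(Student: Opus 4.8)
The plan is to analyze each orbit separately using the normal forms from Theorem~\ref{segrethm}, exploiting the $\GL(V)$-equivariance noted in Remark~\ref{schemerem}. By Lemma~\ref{dim1lem}, for a fixed $\omega$ we have $\Lambda\in\calS_\omega$ precisely when some line $\langle\Lambda,\Lambda'\rangle$ through two distinct points of $\Gr_3V$ passes through $\omega$; equivalently, writing $\omega$ in a normal form, we must solve $\Lambda' = \Lambda + s\,\omega$ (in $\bwedge^3 V$, up to scalar) for decomposable $\Lambda,\Lambda'$ and some $s\neq 0$. So the strategy is: parametrize decomposable $\Lambda = v_1\wedge v_2\wedge v_3$, impose that $\Lambda + s\,\omega$ is again decomposable for some $s\neq 0$, and count the dimension of the resulting locus of $\Lambda$.

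\smallskip

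For case (3), $\omega = \omega_5 = e_1\wedge(e_{23}+e_{45})$ lies on the line in $\PP\bwedge^3 V$ joining $e_{123}$ and $e_{145}$, both of which are on $\Gr_3V$; more relevantly, $\omega_5$ is a decomposable-looking ``partially decomposable'' form $e_1\wedge\tau$ with $\tau=e_{23}+e_{45}\in\bwedge^2 W$ of rank $4$, where $W=\langle e_2,\dots,e_6\rangle$. The key observation is that projection from $\omega_5$ restricted to the sub-Grassmannian of $3$-planes containing $e_1$ essentially reduces to the $\dim V = 4$ situation: for $\Lambda = e_1\wedge\Pi$ with $\Pi\in\Gr_2(W/\text{something})$ — more precisely $\Lambda$ containing $e_1$ corresponds to $\Pi\in\Gr_2(V/\langle e_1\rangle)$ — the condition $\pi_{\omega_5}(\Lambda)=\pi_{\omega_5}(\Lambda')$ becomes $\pi_\tau(\Pi)=\pi_\tau(\Pi')$ in $\Gr_2$ of a $4$-dimensional space (after quotienting by a complementary line), and by Remark~\ref{Rem:G24} (or Theorem~\ref{m=2iff}) this has a $4$-dimensional solution locus: $\Pi' = \Pi^\angle$ with respect to $\tau$. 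Since the locus of $3$-planes containing $e_1$ is already $6$-dimensional and its generic member lies in $\calS_{\omega_5}$ via this skew-orthogonal pairing (inside that sub-Grassmannian), I would first argue $\dim\calS_{\omega_5}\ge ?$ — but one must be careful: $\Lambda^\angle$ computed in the $4$-dimensional quotient need not lift to a $3$-plane through $e_1$ forming a line through $\omega_5$ with $\Lambda$ unless the construction is arranged correctly, so I'd instead directly produce the family. Concretely: the pairs come from $v\wedge\tau$-type constructions, and I expect $\calS_{\omega_5}$ to be (the closure of) $\{e_1\wedge\Pi : \Pi\in\Gr_2(\langle e_2,\dots,e_6\rangle)\} $ union its ``image'' locus — a $6$-dimensional family — together with possibly more, and the upper bound $\dim\calS_{\omega_5}\le ?$... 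Here I realize the claimed answer is $4$, not $6$, so the correct reduction must be more restrictive: the fibre identification only applies to $\Lambda$ of a special form, and a generic $\Lambda$ containing $e_1$ is \emph{not} in $\calS_{\omega_5}$. So the plan for (3) is: show $\calS_{\omega_5}$ is cut out by explicit polynomial equations obtained from requiring $\Lambda + s\omega_5$ decomposable (a rank/Plücker-relation condition), and compute its dimension to be exactly $4$, matching the heuristic from Remark~\ref{Rem:G24} that the extra degeneracy is ``a $\Gr_2(\CC^4)$ worth'' of pairs.

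\smallskip

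For case (1), $\omega_0 = e_{123}+e_{456}$: I would impose that $v_1\wedge v_2\wedge v_3 + s(e_{123}+e_{456})$ is decomposable. Writing each $v_i$ in the decomposition $V = \langle e_1,e_2,e_3\rangle\oplus\langle e_4,e_5,e_6\rangle$ and expanding, decomposability is governed by the single quadratic Plücker relation on $\bwedge^3\CC^6$ (the Grassmannian $\Gr_3\CC^6\subset\PP^{19}$ has codimension $10$ but is cut out by quadrics; the relevant count is that requiring a point of a generic line to lie on $\Gr_3V$ imposes conditions). The cleanest route: the line $\langle\Lambda, \Lambda+s\omega_0\rangle$ must meet $\Gr_3 V$ in at least two points, but a general line meets $\Gr_3 V$ in finitely many points (its degree is the relevant Plücker degree), so for $\Lambda\in\calS_{\omega_0}$ the line through $\Lambda$ and $\omega_0$ must be \emph{special} — and I'd show the incidence variety $\{(\Lambda,\Lambda') : \Lambda\neq\Lambda',\ \omega_0\in\langle\Lambda,\Lambda'\rangle\}$ projects to a finite set in $\Gr_3V$ by a tangent-space / dimension count at the normal form, or by exhibiting the defining equations explicitly and checking the solution set is zero-dimensional. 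This ``generic $\omega$'' case should follow from a transversality argument: $\calS_{\omega_0}$ is the branch locus of the finite map $\pi_{\omega_0}$, and one computes its dimension directly.

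\smallskip

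For case (2), $\omega_1 = e_{126}-e_{153}+e_{234}$, a general tangent vector at $e_{123}$: here I expect $\pi_{\omega_1}$ to be \emph{injective}, which is the strongest statement. The approach: suppose $\Lambda,\Lambda'\in\Gr_3V$ are distinct with $\omega_1\in\langle\Lambda,\Lambda'\rangle$, so $\Lambda' = \Lambda + s\omega_1$ is decomposable for some $s\neq0$. Using the curve description from Remark~\ref{tangentrem} — $\omega_1$ is the velocity at $t=0$ of $\Lambda(t)=\bigwedge_i(e_i+te_{i+3})$ — I'd relate the existence of such a pair to a degeneracy that cannot occur for a genuine tangent direction: a line through a tangent vector $\omega_1\notin\mathcal T_1$ (by Remark~\ref{orbgeom}(2), $O_1$ is the complement of $\mathcal T_1$ in $\mathcal T\Gr_3V$) meeting $\Gr_3V$ in two distinct points would force those points to span a line exhibiting $\omega_1\in\mathcal T_1$, contradicting $\omega_1\in O_1$. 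That is, $\calS_{\omega_1}\neq\emptyset$ would put $\omega_1$ on a secant-type line between two points of $\Gr_3V$; by the geometric characterization of the orbits, the only such lines through a point of $\mathcal T\Gr_3V\smallsetminus\Gr_3V$ that meet $\Gr_3V$ twice land in $\mathcal T_1$, whereas $\omega_1\in O_1$ lies outside $\mathcal T_1$ — giving the contradiction. I would spell this out via the explicit normal form to make the geometric claim rigorous.

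\smallskip

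\textbf{Main obstacle.} The delicate part is case (3): getting the dimension of $\calS_{\omega_5}$ to be \emph{exactly} $4$, not larger. The naive reduction to $\Gr_2(\CC^4)$ suggests a family of $3$-planes through $e_1$ but that family is a priori larger than $4$-dimensional, so I must identify precisely which $3$-planes pair up under $\pi_{\omega_5}$ and show the ``extra'' ones do not. Concretely this means writing the decomposability condition on $\Lambda + s\omega_5$ as explicit Plücker-type equations, solving, and carefully computing the dimension of the solution locus — the routine-but-unavoidable calculation. The secant-line / tangent-variety geometry of Remark~\ref{orbgeom} should organize cases (1) and (2) cleanly, but case (3) requires the hands-on coordinate computation to pin down the exact dimension.
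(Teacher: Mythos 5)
Your overall framing---reduce to normal forms via $\GL(V)$-equivariance, use Lemma~\ref{dim1lem} to translate $\Lambda\in\calS_\omega$ into $\omega$ lying on a secant line---is exactly the paper's. Cases (1) and (2) are essentially right and match the paper: for (2), a secant line $\langle\Lambda,\Lambda'\rangle$ through $\omega$ forces $\omega\in O_0$ (if $\Lambda\cap\Lambda'=0$) or $\omega\in\mathcal T_1$ (if not), and $O_1$ is disjoint from both; for (1), the $\GL(V)$-homogeneity of $O_0$ plus the count $\dim\Gr_3V+\dim\Gr_3V+1=19=\dim\PP\bwedge^3V$ makes the fibers of the secant incidence variety over $O_0$ zero-dimensional. (Your aside that ``a general line meets $\Gr_3V$ in finitely many points'' is off---a general line misses $\Gr_3V$ entirely, as it has codimension $10$---but this does not affect the dimension count.)

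The genuine gap is case (3), and you flag it yourself without closing it. The missing idea is \emph{not} a brute-force Pl\"ucker computation but a second linear constraint on $\Lambda$. If $\omega\in O_5$ lies on $\langle\Lambda,\Lambda'\rangle$ with $\dim\Lambda\cap\Lambda'=1$, then besides $\Lambda\supset\alpha_\omega$ (the kernel of $v\mapsto v\wedge\omega$, which equals $\Lambda\cap\Lambda'$) one also has $\omega\in\bwedge^3(\Lambda+\Lambda')$ with $\dim(\Lambda+\Lambda')=5$; since $A_\omega$ is the \emph{unique minimal} subspace $W$ with $\omega\in\bwedge^3W$ and it is already a hyperplane, necessarily $\Lambda+\Lambda'=A_\omega$, so $\Lambda\subset A_\omega$. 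This is the constraint that cuts your $6$-dimensional family $\{\Lambda\supset e_1\}\cong\Gr_2(V/\langle e_1\rangle)$ down to the Schubert variety $\Omega_\omega=\{\alpha_\omega\subset\Lambda\subset A_\omega\}\cong\Gr_2(A_\omega/\alpha_\omega)\cong\Gr_2\CC^4$, which is exactly $4$-dimensional. For the reverse inclusion (density of $\calS_\omega$ in $\Omega_\omega$), one observes that $\omega=\alpha_\omega\wedge\sigma$ with $\sigma$ an indecomposable $2$-form on $A_\omega/\alpha_\omega$, and that $\pi_\omega$ restricted to $\Omega_\omega$ is identified with $\pi_\sigma$ on $\Gr_2(A_\omega/\alpha_\omega)$, which has degree $2$ by Remark~\ref{Rem:G24}; so a dense subset of $\Omega_\omega$ lies in $\calS_\omega$. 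Without the containment $\Lambda\subset A_\omega$ your proposed computation has no anchor, and the family you begin with (all $3$-planes through $e_1$) would suggest the wrong dimension; with it, no coordinate computation is needed at all.
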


\begin{proof}
  By Lemma~\ref{dim1lem}, $\Lambda\in\calS_\omega$ if and only if there is a $\Lambda'\in\Gr_3V$ with $\Lambda\neq\Lambda'$
  such that $\omega\in\langle\Lambda,\Lambda'\rangle$, the line in $\PP{\bwedge^2}V$ spanned by the Pl\"ucker vectors of
  $\Lambda$ and $\Lambda'$.

  Let $\Lambda\neq\Lambda'$ be distinct 3-planes in $\Gr_3V$ and
  $\omega\in\langle\Lambda,\Lambda'\rangle\smallsetminus\Gr_3V$.
  By Remark~\ref{orbgeom}(2), $\omega\not\in O_1$, which proves (2).
  We argue by the dimension of $\Lambda\cap\Lambda'$.
  If $\dim\Lambda\cap\Lambda'=0$, then $\omega\in O_0$, by Theorem~\ref{segrethm}(1).
  Since $\dim\Gr_3V=9$ and $\dim\PP{\bwedge^3}V=19$, dimension-counting shows that for a point $\omega\in O_0$,
  $\calS_\omega$  is zero-dimensional and hence finite, proving (1).
  If $\dim\Lambda\cap\Lambda'=1$, then $\omega\in O_5$, by Theorem~\ref{segrethm}(3).
  Statement (3) is Lemma~\ref{L:O5} below.
  If $\dim\Lambda\cap\Lambda'=2$, then $\langle\Lambda,\Lambda'\rangle\subset\Gr_3V$.
\end{proof}

An element $\omega\in\bwedge^3V$ defines two linear maps
 \[
   \begin{array}{rclcrcl}
     \defcolor{\wedge\omega}&\colon& V\ \longrightarrow\ \bwedge^4V &\qquad&
     \defcolor{\lrcorner\,\omega}&\colon& V^*\ \longrightarrow\ \bwedge^2V \\
     && v\ \longmapsto\ v\wedge\omega&&&& v\ \longmapsto\  v\lrcorner\,\omega
   \end{array}\ .
 \]
%

\begin{lemma}\label{L:Kernels} 
  If $\omega\in O_5$, then both $\wedge\omega$ and $\lrcorner\,\omega$ have one-dimensional kernels.
\end{lemma}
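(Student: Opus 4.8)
The plan is to exploit the normal form $\omega_5 = e_1\wedge(e_{23}+e_{45})$ from Theorem~\ref{segrethm}(3), since by the $\GL(V)$-equivariance noted in Remark~\ref{schemerem} (both maps $\wedge\omega$ and $\lrcorner\,\omega$ transform naturally under $\GL(V)$, so the dimension of their kernels is constant on orbits), it suffices to compute the kernels for this single representative $\omega=\omega_5$. So first I would write $\omega = e_{123}+e_{145}$ and simply compute $v\wedge\omega$ for a general $v=\sum_{i=1}^6 c_i e_i$.

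For the map $\wedge\omega\colon V\to\bwedge^4 V$: expanding $v\wedge(e_{123}+e_{145})$, the term $e_i\wedge e_{123}$ vanishes for $i\in\{1,2,3\}$ and $e_i\wedge e_{145}$ vanishes for $i\in\{1,4,5\}$, so only $i\in\{1,\dots,6\}$ contribute through various four-fold wedges. Writing it out, $v\wedge\omega = c_4 e_{4123}+c_5 e_{5123}+c_6 e_{6123} + c_2 e_{2145}+c_3 e_{3145}+c_6 e_{6145}$. The four-forms $e_{1234},e_{1235},e_{1236},e_{1245},e_{1345}$ are linearly independent basis vectors, and the $e_{6123}$ and $e_{6145}$ terms are also independent of these and of each other, so up to sign the coefficients that must vanish are $c_2,c_3,c_4,c_5,c_6$ (with $c_6$ appearing twice, consistently). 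Hence $v\wedge\omega=0$ forces $c_2=c_3=c_4=c_5=c_6=0$, leaving only $c_1$ free; the kernel is $\langle e_1\rangle$, which is one-dimensional. (Geometrically $e_1$ is the common factor of both decomposable summands.)

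For the map $\lrcorner\,\omega\colon V^*\to\bwedge^2 V$: I would compute $\xi\lrcorner(e_{123}+e_{145})$ for $\xi=\sum_i c_i e_i^*$. Using $e_i^*\lrcorner e_{jk\ell}$ which is nonzero only when $i\in\{j,k,\ell\}$, we get $e_1^*\lrcorner\omega = e_{23}+e_{45}$, $e_2^*\lrcorner\omega = -e_{13}$, $e_3^*\lrcorner\omega = e_{12}$, $e_4^*\lrcorner\omega = -e_{15}$, $e_5^*\lrcorner\omega = e_{14}$, and $e_6^*\lrcorner\omega = 0$. Thus $\xi\lrcorner\omega = c_1(e_{23}+e_{45}) - c_2 e_{13} + c_3 e_{12} - c_4 e_{15} + c_5 e_{14}$. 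The two-forms $e_{23},e_{45},e_{13},e_{12},e_{15},e_{14}$ are linearly independent, so this vanishes iff $c_1=c_2=c_3=c_4=c_5=0$, leaving $c_6$ free; the kernel is $\langle e_6^*\rangle$, one-dimensional.

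Assembling these two computations proves the lemma: for $\omega\in O_5$, $\GL(V)$-equivariance reduces to the normal form $\omega_5$, and the explicit expansions above show $\ker(\wedge\omega)=\langle e_1\rangle$ and $\ker(\lrcorner\,\omega)=\langle e_6^*\rangle$ are each one-dimensional. I do not anticipate a genuine obstacle here — this is a direct linear-algebra verification — but the one point requiring a little care is confirming the linear independence of the relevant wedge products (especially tracking the two occurrences of the $c_6$-coefficient in the $\wedge\omega$ computation so that no spurious cancellation occurs), and being explicit that the dimension of a kernel is an orbit invariant so that a single normal-form computation genuinely settles all of $O_5$.
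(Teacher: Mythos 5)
Your proof is correct and is exactly the argument the paper intends: reduce to the normal form $\omega_5=e_1\wedge(e_{23}+e_{45})$ by $\GL(V)$-equivariance and verify by direct expansion that $\ker(\wedge\omega_5)=\langle e_1\rangle$ and $\ker(\lrcorner\,\omega_5)=\langle e_6^*\rangle$. The paper's proof simply states that these computations give the result, so your write-up supplies the same computation in more detail.
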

\begin{proof}
  Computations using the normal form of $\omega \in O_5$ given by Theorem~\ref{segrethm}(3) show that
  the kernel of $\wedge\omega_5$ is $\langle e_1\rangle$ and the kernel of $\lrcorner\,\omega_5$ is $\langle e_6^*\rangle$.
\end{proof}

For $\omega \in O_5$,
write $\defcolor{\alpha_\omega}\in\PP V$ for the kernel of $\wedge\omega$ and $\defcolor{A_\omega}\in\PP V^*$ for the
kernel of $\lrcorner\,\omega$.
We regard $\alpha_\omega$ as a 1-dimensional linear subspace of $V$ and $A_\omega$ as a hyperplane in $V$.

\begin{corollary}\label{C:IntSpan}
  Let $\omega\in O_5$.
  Then $\alpha_\omega\subset A_\omega$, $A_\omega$ is the smallest subspace $W$ of $V$ such that $\omega\in\bwedge^3W$, and
  if $\pi_\omega(\Lambda)=\pi_\omega(\Lambda')$ for $\Lambda\neq\Lambda'\in\Gr_3V$, then
  $\alpha_\omega=\Lambda\cap\Lambda'$ and $A_\omega=\langle\Lambda,\Lambda'\rangle$ (their span in $V$).
  Finally, there is an indecomposable $2$-form $\sigma\in\bwedge^2 A_\omega$ such that
  $\omega=\alpha_\omega\wedge\sigma$, with $\alpha_\omega$ and $\sigma$ well-defined up to scalars. 
\end{corollary}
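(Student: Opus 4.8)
The plan is to establish the four assertions in order. By Remark~\ref{schemerem} every construction below is $\GL(V)$-equivariant, so any single step may, if convenient, be checked on the normal form $\omega_5=e_1\wedge(e_{23}+e_{45})$ of Theorem~\ref{segrethm}(3), for which Lemma~\ref{L:Kernels} gives $\alpha_{\omega_5}=\langle e_1\rangle$ and $A_{\omega_5}=\langle e_1,\dots,e_5\rangle$; however, most steps go through more transparently coordinate-free.

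First I would prove $\alpha_\omega\subseteq A_\omega$. Let $v$ span $\alpha_\omega$; then $v\wedge\omega=0$, so $\omega=v\wedge\tau$ for some $\tau\in\bwedge^2V$. Let $\xi\in V^*$ span the kernel of $\lrcorner\,\omega$. Then $0=\xi\lrcorner\,\omega=\xi(v)\,\tau-v\wedge(\xi\lrcorner\,\tau)$; if $\xi(v)\neq0$ this forces $\tau\in v\wedge V$ and hence $\omega=v\wedge\tau=0$, contradicting $\omega\in O_5$. So $\xi(v)=0$, i.e.\ $v$ lies in the hyperplane $A_\omega=\ker\xi$. Next, for the claim that $A_\omega$ is the smallest $W$ with $\omega\in\bwedge^3W$, I would invoke the standard fact that for a nonzero $\eta\in\bwedge^k V$ the smallest such subspace is the annihilator of $\{\,\phi\in V^*\mid\phi\lrcorner\,\eta=0\,\}$, and combine it with Lemma~\ref{L:Kernels}, which says this set of $\phi$ is a line; hence its annihilator is the hyperplane $A_\omega$.

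For the geometric identification, suppose $\pi_\omega(\Lambda)=\pi_\omega(\Lambda')$ with $\Lambda\neq\Lambda'$ in $\Gr_3V$. By Lemma~\ref{dim1lem}, $\omega$ lies on the line $\langle\Lambda,\Lambda'\rangle$ in $\PP{\bwedge^3}V$, so $\omega=s\Lambda+t\Lambda'$ for scalars $s,t$ in terms of Pl\"ucker vectors. Arguing by $\dim(\Lambda\cap\Lambda')$ exactly as in the proof of Proposition~\ref{orbitpairprop}, using that $\omega\in O_5$ rules out $\dim(\Lambda\cap\Lambda')\in\{0,2\}$ (these would put $\omega$ in $O_0$ or in $\Gr_3V$) and that $\omega\notin O_1$ always (Remark~\ref{orbgeom}(2)), I get $\dim(\Lambda\cap\Lambda')=1$; write $\Lambda\cap\Lambda'=\langle v\rangle$, so $\dim(\Lambda+\Lambda')=5$. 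The Pl\"ucker vectors of $\Lambda$ and of $\Lambda'$ are each divisible by $v$, hence so is $\omega$, giving $v\wedge\omega=0$; thus $\langle v\rangle\subseteq\alpha_\omega$ and, as both are lines, $\alpha_\omega=\Lambda\cap\Lambda'$. Also $\omega\in\bwedge^3(\Lambda+\Lambda')$, so the previous paragraph gives $A_\omega\subseteq\Lambda+\Lambda'$, and since both are hyperplanes, $A_\omega=\Lambda+\Lambda'=\langle\Lambda,\Lambda'\rangle$.

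Finally, for the decomposition I would fix a generator $v$ of $\alpha_\omega$, write $\omega=v\wedge\tau$, choose $w$ with $V=A_\omega\oplus\langle w\rangle$, and split $\tau=\tau_0+w\wedge u$ with $\tau_0\in\bwedge^2A_\omega$ and $u\in A_\omega$. Then $\omega=v\wedge\tau_0+v\wedge w\wedge u$, and since $\omega\in\bwedge^3A_\omega$ while $v\wedge w\wedge u$ has no component in $\bwedge^3A_\omega$ unless it vanishes, we get $\omega=v\wedge\sigma$ with $\sigma:=\tau_0\in\bwedge^2A_\omega$. If $\sigma=x\wedge y$ were decomposable then $\omega=v\wedge x\wedge y$ would be decomposable, i.e.\ $\omega\in\Gr_3V=O_{10}$, contradicting $\omega\in O_5$; so $\sigma$ is indecomposable. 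For the well-definedness, two forms $\sigma,\sigma'\in\bwedge^2A_\omega$ with $v\wedge\sigma=v\wedge\sigma'=\omega$ satisfy $v\wedge(\sigma-\sigma')=0$, hence $\sigma-\sigma'=v\wedge z$ with $z$ taken in $A_\omega$; thus $\sigma$ is determined modulo $\alpha_\omega\wedge A_\omega$, and replacing $v$ by $\lambda v$ replaces $\sigma$ by $\lambda^{-1}\sigma$ (equivalently, $\sigma$ induces a well-defined symplectic form on $A_\omega/\alpha_\omega$ up to a scalar), which is the asserted well-definedness. I expect the step requiring the most care to be the geometric identification in the third paragraph—pinning down $\dim(\Lambda\cap\Lambda')=1$ from the orbit classification and correctly invoking the support characterization of $A_\omega$—together with the uniqueness bookkeeping in this last paragraph; the remaining steps are short and can, if preferred, be confirmed directly on the normal form $\omega_5$.
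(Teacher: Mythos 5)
Your proof is correct, and it follows the same overall strategy as the paper --- realizing $\omega\in O_5$ as a point of a secant line $\langle\Lambda,\Lambda'\rangle$ with $\dim(\Lambda\cap\Lambda')=1$ and reading off $\alpha_\omega=\Lambda\cap\Lambda'$ and $A_\omega=\langle\Lambda,\Lambda'\rangle$ --- but you substitute coordinate-free verifications for the steps the paper delegates to the normal form $\omega_5$. In particular, you prove $\alpha_\omega\subset A_\omega$ from the contraction identity $\xi\lrcorner(v\wedge\tau)=\xi(v)\tau-v\wedge(\xi\lrcorner\tau)$, and you obtain minimality of $A_\omega$ from the general support characterization (the smallest $W$ with $\omega\in\bwedge^3W$ is the annihilator of $\ker(\lrcorner\,\omega)$), whereas the paper instead observes that every $3$-form on a $4$-dimensional space is decomposable, so $\omega\in O_5$ cannot be supported on anything smaller than a hyperplane; both arguments are valid, and yours has the advantage of identifying $A_\omega$ with $\ker(\lrcorner\,\omega)^\perp$ directly rather than via the secant-line description. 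Your use of the dichotomy on $\dim(\Lambda\cap\Lambda')$ from the proof of Proposition~\ref{orbitpairprop} introduces no circularity, since that dichotomy does not rely on this corollary. One point worth highlighting: you correctly observe that $\sigma$ is not literally unique up to scalar as an element of $\bwedge^2A_\omega$ (one may add any multiple of $\alpha_\omega\wedge z$ with $z\in A_\omega$), but is well defined up to scalar as an element of $\bwedge^2(A_\omega/\alpha_\omega)$; this sharper reading is exactly what is used later in Lemma~\ref{L:O5}, so your bookkeeping here is a genuine clarification rather than a deviation.
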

\begin{proof}
  By the normal form of Theorem~\ref{segrethm}(3) and the proof of Proposition~\ref{orbitpairprop},
  $\alpha_\omega=\Lambda\cap\Lambda'$, so that $\langle\Lambda,\Lambda'\rangle$ is a hyperplane in $V$.
  Since $\omega,\Lambda,\Lambda'$ are collinear in $\PP{\bwedge^3}V$, $\omega\in\bwedge^3 A_\omega$.
  For any four dimensional subspace $W$ of $V$, $\bwedge^3W\subset\Gr_3V$, which shows the minimality of $A_\omega$.
  The last statement follows from these identifications and Theorem~\ref{segrethm}(3).
\end{proof}

By Corollary~\ref{C:IntSpan}, if $\omega\in O_5$, then
$\omega\in \CC\alpha_\omega\wedge\bwedge^2 A_\omega\simeq \bwedge^2(A_\omega/\alpha_\omega)$.
Notice that $\Lambda \mapsto \Lambda/\alpha_\omega$ identifies the Schubert variety
 \begin{equation}\label{Eq:Xomega}
   \defcolor{\Omega_\omega}\ :=\ \{\Lambda\in\Gr_3V\mid \alpha_\omega\in\Lambda\subset A_\omega\}
 \end{equation}
with $\Gr_2(A_\omega/\alpha_\omega)\simeq \Gr_2\CC^4$.

Let $\defcolor{\Fl}\subset\PP V\times\PP V^*$ be the flag variety whose points are pairs $(\alpha,A)$ with
$\alpha\subset A$; the one-dimensional linear subspace $\alpha$ lies in the hyperplane $A$.
The projection of $\Fl$ to each projective space factor is a $\PP^4$ bundle.
Let $\defcolor{L}\to\Fl$ 
be the subbundle of $\PP{\bwedge^3}V\times\Fl$ whose fiber over $(\alpha,A)$ is
$\PP(\alpha\wedge\bwedge^2A)\simeq\PP^5$.
The Schubert variety $\Omega_\omega$~\eqref{Eq:Xomega} depends only upon the flag $\alpha_\omega\subset A_\omega$ and it
lies in $\PP(\alpha_\omega\wedge\bwedge^2A_\omega)$.
Write \defcolor{$\Omega(\alpha,A)$} for the Schubert variety corresponding to the flag $\alpha\subset A$.
A consequence of this definition and Corollary~\ref{C:IntSpan} is the following.

\begin{corollary}\label{C:Bundle}
  For $\omega\in O_5$, the map $\omega\mapsto(\alpha_\omega,A_\omega)\in\Fl$ realizes $O_5$ as a bundle over $\Fl$, which
  is a dense open subset of $L$. 
  The points in the fiber above $(\alpha,A)$ consist of points in $\PP(\alpha\wedge\bwedge^2A)$ in the complement
  of\/ $\Omega(\alpha,A)$.   
\end{corollary}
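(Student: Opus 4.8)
The plan is to assemble Corollary~\ref{C:Bundle} almost entirely from facts already established, so the proof is short and organizational. First I would recall the three ingredients: Lemma~\ref{L:Kernels} gives, for each $\omega\in O_5$, well-defined elements $\alpha_\omega\in\PP V$ and $A_\omega\in\PP V^*$ as the kernels of $\wedge\omega$ and $\lrcorner\,\omega$; Corollary~\ref{C:IntSpan} shows $\alpha_\omega\subset A_\omega$, so $(\alpha_\omega,A_\omega)$ is genuinely a point of $\Fl$, and moreover that $\omega\in\PP(\alpha_\omega\wedge\bwedge^2A_\omega)$, i.e.\ $\omega$ lies in the fiber $L_{(\alpha_\omega,A_\omega)}$ of $L\to\Fl$; and Theorem~\ref{segrethm}(3) together with Corollary~\ref{C:IntSpan} identifies the $O_5$-points inside that fiber as exactly those of the form $\alpha_\omega\wedge\sigma$ with $\sigma$ indecomposable, equivalently $\omega/\alpha_\omega$ an indecomposable $2$-form in $\bwedge^2(A_\omega/\alpha_\omega)$.

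Next I would make the ``bundle'' claim precise. The assignment $\omega\mapsto(\alpha_\omega,A_\omega)$ is $\GL(V)$-equivariant (kernels of $\wedge\omega$ and $\lrcorner\,\omega$ transform naturally), and $\GL(V)$ acts transitively on $\Fl$, so $O_5\to\Fl$ is a $\GL(V)$-equivariant surjection; its fibers are $\GL(V)$-homogeneous under the stabilizer of a flag, hence all isomorphic. Concretely, the fiber over $(\alpha,A)$ is $\{\omega\in\PP(\alpha\wedge\bwedge^2A): \omega\notin\Gr_3V\}$. Under the identification $\PP(\alpha\wedge\bwedge^2A)\cong\PP\bwedge^2(A/\alpha)\cong\PP\bwedge^2\CC^4\cong\PP^5$, the decomposable (rank $\le 2$) forms are precisely the Pl\"ucker quadric, which is $\Omega(\alpha,A)$ (this is exactly the identification noted just before the statement, that $\Lambda\mapsto\Lambda/\alpha_\omega$ carries $\Omega_\omega$ to $\Gr_2(A/\alpha)\cong\Gr_2\CC^4$). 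So the fiber is $\PP^5\smallsetminus\Omega(\alpha,A)$, proving the last sentence; and since this description is uniform in $(\alpha,A)$ and patches together over the base $\Fl$ inside the bundle $L$, the total space $O_5$ is the open subset of $L$ obtained by deleting the subvariety $\bigcup_{(\alpha,A)}\Omega(\alpha,A)$. That this open subset is dense follows since each fiber $\PP^5\smallsetminus\Omega(\alpha,A)$ is dense in the fiber $\PP^5$ of $L$ and $L$ is irreducible (it is a projective bundle over the irreducible variety $\Fl$).

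I would phrase this as: ``By Lemma~\ref{L:Kernels} and Corollary~\ref{C:IntSpan} the map $\omega\mapsto(\alpha_\omega,A_\omega)$ is well defined and $\GL(V)$-equivariant, with $\omega\in\PP(\alpha_\omega\wedge\bwedge^2A_\omega)$, the fiber of $L$ over $(\alpha_\omega,A_\omega)$. Transitivity of $\GL(V)$ on $\Fl$ makes this a bundle map onto $\Fl$. By Theorem~\ref{segrethm}(3), a point of the fiber $\PP(\alpha\wedge\bwedge^2A)$ lies in $O_5$ iff it is not decomposable, i.e.\ not in $\Gr_3V$; and a form $\alpha\wedge\sigma$ is decomposable iff $\sigma\bmod\alpha$ is decomposable in $\bwedge^2(A/\alpha)$, which under $\Gr_2(A/\alpha)\cong\Gr_2\CC^4$ is the quadric $\Omega(\alpha,A)$. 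Hence the fiber of $O_5$ over $(\alpha,A)$ is $\PP(\alpha\wedge\bwedge^2A)\smallsetminus\Omega(\alpha,A)$, and $O_5$ is the complement in $L$ of the closed subvariety $\bigcup_{(\alpha,A)\in\Fl}\Omega(\alpha,A)$, which is proper in each fiber, so $O_5$ is dense open in $L$.''

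The only genuine obstacle is bookkeeping rather than mathematics: being careful that the fiberwise statement ``$O_5\cap L_{(\alpha,A)} = \PP^5\smallsetminus\Omega(\alpha,A)$'' really does glue to the global statement ``$O_5$ is open dense in $L$''. This needs (i) that every $\omega\in O_5$ lands in $L$ (not merely in $\PP\bwedge^3V$) — which is exactly $\omega\in\PP(\alpha_\omega\wedge\bwedge^2A_\omega)$ from Corollary~\ref{C:IntSpan}; and (ii) that the deleted locus $\bigcup\Omega(\alpha,A)$ is closed in $L$ — which holds because $\Omega\to\Fl$ is a sub-(fiber bundle) of $L\to\Fl$, hence its total space is a closed subvariety. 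Once these are noted, openness and density are immediate from irreducibility of $L$ and the fact that the complement of a proper closed subset of each fiber is open dense. So I would flag step (ii) as the one point worth a sentence of justification, and otherwise treat the corollary as a clean corollary of what precedes it.
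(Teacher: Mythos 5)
Your proof is correct and follows essentially the same route as the paper, which states this corollary without a separate proof as "a consequence of this definition and Corollary~\ref{C:IntSpan}"; your write-up simply makes explicit the fiberwise identification of $O_5\cap\PP(\alpha\wedge\bwedge^2A)$ with the complement of the Pl\"ucker quadric $\Omega(\alpha,A)$, plus the openness/density bookkeeping. The extra care you take with the closedness of $\bigcup_{(\alpha,A)}\Omega(\alpha,A)$ in $L$ and the determination of the flag by $\omega$ is sound and consistent with what the paper takes for granted.
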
  

\begin{lemma}\label{L:O5}
  For $\omega\in O_5$, $\calS_\omega$ is a dense subset of\/ $\Omega_\omega$ and therefore has dimension four.
\end{lemma}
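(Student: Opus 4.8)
The plan is to show both inclusions: $\calS_\omega\subseteq\Omega_\omega$ up to a lower-dimensional set, and that $\calS_\omega$ is dense in $\Omega_\omega$. The first inclusion is essentially already in hand. By Lemma~\ref{dim1lem}, if $\Lambda\in\calS_\omega$ then there is $\Lambda'\neq\Lambda$ with $\omega\in\langle\Lambda,\Lambda'\rangle$, and by Corollary~\ref{C:IntSpan} this forces $\alpha_\omega=\Lambda\cap\Lambda'$ and $A_\omega=\langle\Lambda,\Lambda'\rangle$; in particular $\alpha_\omega\subset\Lambda\subset A_\omega$, so $\Lambda\in\Omega_\omega$. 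Thus $\calS_\omega\subseteq\Omega_\omega$ outright, and since $\Omega_\omega\cong\Gr_2(A_\omega/\alpha_\omega)\cong\Gr_2\CC^4$ has dimension four, it remains only to prove that $\calS_\omega$ is dense in $\Omega_\omega$.

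For density, I would use the identification $\Lambda\mapsto\Lambda/\alpha_\omega$ between $\Omega_\omega$ and $\Gr_2(A_\omega/\alpha_\omega)$, together with the factorization $\omega=\alpha_\omega\wedge\sigma$ from Corollary~\ref{C:IntSpan}, where $\sigma\in\bwedge^2 A_\omega$ is indecomposable, hence descends to a symplectic form (also called $\sigma$) on $A_\omega/\alpha_\omega\cong\CC^4$. The point is that for $\Lambda=\alpha_\omega\wedge\bar\Lambda$ and $\Lambda'=\alpha_\omega\wedge\bar\Lambda'$ with $\bar\Lambda,\bar\Lambda'\in\Gr_2(A_\omega/\alpha_\omega)$, collinearity of $\Lambda,\Lambda',\omega$ in $\PP(\alpha_\omega\wedge\bwedge^2 A_\omega)\cong\PP(\bwedge^2\CC^4)$ is exactly the statement that $\bar\Lambda,\bar\Lambda',\sigma$ are collinear in $\PP(\bwedge^2\CC^4)\cong\PP^5$. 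Now apply Theorem~\ref{m=2iff}: since $\sigma$ is a symplectic form on $(\CC^4)^*$ and it does not lie on $\Gr_2\CC^4$ (as $\omega\notin\Omega(\alpha_\omega,A_\omega)$, by Corollary~\ref{C:Bundle}), for \emph{every} $\bar\Lambda\in\Gr_2\CC^4$ other than a measure-zero locus — in fact for every $\bar\Lambda$ whose skew-orthogonal complement $\bar\Lambda^{\angle}$ differs from $\bar\Lambda$ — we have $\pi_\sigma(\bar\Lambda)=\pi_\sigma(\bar\Lambda^{\angle})$ with $\bar\Lambda\neq\bar\Lambda^{\angle}$. Pulling this back, $\Lambda=\alpha_\omega\wedge\bar\Lambda$ and $\Lambda'=\alpha_\omega\wedge\bar\Lambda^{\angle}$ are distinct points of $\Omega_\omega$ collinear with $\omega$, so $\Lambda\in\calS_\omega$ by Lemma~\ref{dim1lem}. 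The locus of $\bar\Lambda$ with $\bar\Lambda=\bar\Lambda^{\angle}$ (the Lagrangian Grassmannian) has dimension three, strictly less than four, so its complement is dense; hence $\calS_\omega$ contains a dense subset of $\Omega_\omega$, giving $\dim\calS_\omega=4$.

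The one point requiring care is the reduction of the ambient collinearity condition in $\PP{\bwedge^3}V$ to a collinearity condition in $\PP{\bwedge^2}(A_\omega/\alpha_\omega)$: one must check that wedging with a fixed vector $\alpha_\omega$ identifies $\alpha_\omega\wedge\bwedge^2 A_\omega$ linearly with $\bwedge^2(A_\omega/\alpha_\omega)$ and carries $\sigma$ to the induced symplectic form, and that under this identification the Pl\"ucker vector of $\Lambda\in\Omega_\omega$ goes to the Pl\"ucker vector of $\Lambda/\alpha_\omega$. This is a routine linear-algebra verification (the map $w\mapsto\alpha_\omega\wedge w$ has kernel $\alpha_\omega\wedge A_\omega$ and image $\alpha_\omega\wedge\bwedge^2 A_\omega$). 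I expect this bookkeeping, rather than any genuine difficulty, to be the main obstacle; once it is in place, Theorem~\ref{m=2iff} does all the real work.
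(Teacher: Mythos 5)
Your proposal is correct and follows essentially the same route as the paper: establish $\calS_\omega\subseteq\Omega_\omega$ via Corollary~\ref{C:IntSpan}, identify $\Omega_\omega$ with $\Gr_2(A_\omega/\alpha_\omega)$ and $\omega$ with a point $\sigma\in\PP{\bwedge^2}(A_\omega/\alpha_\omega)\smallsetminus\Gr_2(A_\omega/\alpha_\omega)$, and conclude from the degree-two projection of $\Gr_2\CC^4$. The only cosmetic difference is that you invoke Theorem~\ref{m=2iff} (the skew-orthogonal-complement description, quantifying density via the Lagrangian Grassmannian) where the paper cites Remark~\ref{Rem:G24} (the quadric-hypersurface degree count); these are the same fact, and the paper itself records your more explicit version in the corollary immediately following the lemma.
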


\begin{proof}
  In the proof of Corollary~\ref{C:IntSpan}, we observed that if $\Lambda\neq\Lambda'$ are 3-planes in $\Gr_3V$ with
  $\pi_\omega(\Lambda)=\pi_\omega(\Lambda')$, then
  $\alpha_\omega\subset\Lambda\subset A_\omega$.
  This implies that $\calS_\omega\subset \Omega_\omega$.

  Consider the restriction of $\pi_\omega$ to $\Omega_\omega\subset\Gr_3V$.
  Both $\omega$ and $\Omega_\omega$ lie in $\PP(\alpha_\omega\wedge \bwedge^2A_\omega)$, which is identified
  with $\PP{\bwedge^2}(A_\omega/\alpha_\omega)$.
  Write $\omega=\alpha_\omega\wedge\sigma$ with $\sigma\in\bwedge^2 (A_\omega/\alpha_\omega)$.
  Identifying $\Omega_\omega$ with $\Gr_2(A_\omega/\alpha_\omega)$, the map $\pi_\omega$ on $\Omega_\omega$ becomes 
  $\pi_\sigma$, which has degree 2, by Remark~\ref{Rem:G24}.
  This completes the proof.
\end{proof}  

Theorem \ref{m=2iff} and the proof of  Lemma \ref{L:O5} imply the following corollary.

\begin{corollary}
  Let  $\omega =\alpha\wedge \sigma\in O_5$.
  If $\Lambda\neq \Lambda'$ are $3$-planes then $\pi_\omega(\Lambda)=\pi_\omega(\Lambda')$
  if and only if $\Lambda, \Lambda'\in \Omega_\omega$ and $\Lambda'/\alpha=\big(\Lambda/\alpha\big)^{\angle_\sigma}$.
\end{corollary}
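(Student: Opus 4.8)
The plan is to reduce the statement to Theorem~\ref{m=2iff} through the identification, already exploited in the proof of Lemma~\ref{L:O5}, of $\pi_\omega$ restricted to the Schubert variety $\Omega_\omega$ with the degree-two projection $\pi_\sigma$ of $\Gr_2(A_\omega/\alpha_\omega)\cong\Gr_2\CC^4$. Throughout, write $\alpha=\alpha_\omega$ and $A=A_\omega$, and recall from Corollary~\ref{C:IntSpan} that $\omega=\alpha\wedge\sigma$ with $\sigma\in\bwedge^2A$ indecomposable and well-defined up to scalars.

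First I would handle the forward implication. Suppose $\Lambda\neq\Lambda'$ with $\pi_\omega(\Lambda)=\pi_\omega(\Lambda')$. Corollary~\ref{C:IntSpan} gives $\alpha=\Lambda\cap\Lambda'$ and $A=\langle\Lambda,\Lambda'\rangle$, so that $\alpha\subset\Lambda,\Lambda'\subset A$; hence $\Lambda,\Lambda'\in\Omega_\omega$. Since $\Lambda\mapsto\Lambda/\alpha$ identifies $\Omega_\omega$ with $\Gr_2(A/\alpha)$ and, under the induced isomorphism $\PP(\alpha\wedge\bwedge^2A)\cong\PP\bwedge^2(A/\alpha)$, carries $\pi_\omega|_{\Omega_\omega}$ to $\pi_\sigma$ with $\sigma$ now viewed as a $2$-form on $A/\alpha$, we obtain $\pi_\sigma(\Lambda/\alpha)=\pi_\sigma(\Lambda'/\alpha)$ with $\Lambda/\alpha\neq\Lambda'/\alpha$. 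As $\sigma$ is indecomposable on the $4$-dimensional space $A/\alpha$ it is symplectic, so Theorem~\ref{m=2iff} applies and forces $\Lambda'/\alpha=(\Lambda/\alpha)^{\angle_\sigma}$.

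For the reverse implication I would assume $\Lambda\neq\Lambda'$ lie in $\Omega_\omega$ with $\Lambda'/\alpha=(\Lambda/\alpha)^{\angle_\sigma}$. Under the same identification, $\pi_\omega(\Lambda)$ and $\pi_\omega(\Lambda')$ become $\pi_\sigma(\Lambda/\alpha)$ and $\pi_\sigma(\Lambda'/\alpha)$, and since $\Lambda'/\alpha$ is the skew-orthogonal complement of $\Lambda/\alpha$ with respect to $\sigma^*$, Theorem~\ref{m=2iff} gives $\pi_\sigma(\Lambda/\alpha)=\pi_\sigma(\Lambda'/\alpha)$; hence $\pi_\omega(\Lambda)=\pi_\omega(\Lambda')$.

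There is no substantial obstacle here: the argument is simply an assembly of Corollaries~\ref{C:IntSpan} and~\ref{C:Bundle}, Lemma~\ref{L:O5}, and Theorem~\ref{m=2iff}. The only points demanding care are the bookkeeping ones, namely verifying that $\sigma\in\bwedge^2(A/\alpha)$ is nondegenerate so that Theorem~\ref{m=2iff} is applicable, and that the projection center $\omega\in\PP(\alpha\wedge\bwedge^2A)$ is sent to a point of $\PP\bwedge^2(A/\alpha)\smallsetminus\Gr_2(A/\alpha)$ under the natural isomorphism $\alpha\wedge\bwedge^2A\cong\bwedge^2(A/\alpha)$ — both of which follow from $\omega\in O_5$ via Corollary~\ref{C:Bundle}.
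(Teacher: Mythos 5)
Your proposal is correct and follows exactly the route the paper intends: the paper states this corollary as an immediate consequence of Theorem~\ref{m=2iff} together with the identification of $\pi_\omega|_{\Omega_\omega}$ with $\pi_\sigma$ on $\Gr_2(A_\omega/\alpha_\omega)$ made in the proof of Lemma~\ref{L:O5}, and you have simply written out that reduction (including the necessary check, via Corollaries~\ref{C:IntSpan} and~\ref{C:Bundle}, that $\sigma$ is indecomposable on $A_\omega/\alpha_\omega$ and hence symplectic). No discrepancies with the paper's argument.
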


 \subsection{The center has dimension less than five}
 \label{small6sec}

By Proposition \ref{orbitpairprop} and \eqref{SR1}, if $Z\subset\PP{\bwedge^3}V$ is a linear subspace that does not meet
the Grassmannian $\Gr_3V$, then 
 \begin{equation}
  \label{SR2}
  \calS_Z\ =\ \bigcup_{\omega\in Z\cap O_0} \calS_\omega\ \cup\ \bigcup_{\omega\in Z\cap O_5} \calS_\omega\, ,
 \end{equation}
and it follows that
 \begin{equation}
  \label{dimSR}
  \dim \calS_{Z}\ \leq\ \max \{\dim (Z\cap O_0), \dim (Z\cap O_5)+4\}\,.
 \end{equation}
Since $\dim \Gr_3V=9$, the last relation implies the following result.

\begin{theorem}
 \label{wendy1}
 If\/ $Z\subset\PP{\bwedge^3}V$ is a linear subspace that does not meet the Grassmannian $\Gr_3V$, $\dim Z<9$, and
 $\dim Z\cap O_5\leq 4$, then $\pi_Z$ has degree $1$ on $\Gr_3V$.
\end{theorem}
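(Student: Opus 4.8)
The plan is to read the statement off from the dimension bound \eqref{dimSR}, so that the only real content is arithmetic with dimensions. First recall, as noted in Remark~\ref{schemerem}, that $\pi_Z$ has degree $1$ as soon as $\calS_Z$ does not contain a dense open subset of $\Gr_3V$: if $\Lambda$ lies in the dense open complement of $\overline{\calS_Z}$, then by Lemma~\ref{dim1lem} the fiber $\pi_Z^{-1}(\pi_Z(\Lambda))\cap\Gr_3V$ is exactly $\{\Lambda\}$, so $\pi_Z$ is generically injective, its image has dimension $9$, and its degree is $1$. Thus it suffices to prove $\dim\calS_Z\leq 8$.

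To do this I would invoke \eqref{SR2}, which already incorporates Proposition~\ref{orbitpairprop}: since $Z$ is disjoint from $\Gr_3V=O_{10}$ and $\calS_\omega=\emptyset$ for $\omega\in O_1$, the set $\calS_Z$ is the union of the $\calS_\omega$ over $\omega\in Z\cap O_0$ and over $\omega\in Z\cap O_5$. For $\omega\in O_0$ the fiber $\calS_\omega$ is finite by Proposition~\ref{orbitpairprop}(1), so the first union, being a family of finite sets over a base of dimension $\dim(Z\cap O_0)\leq\dim Z\leq 8$, has dimension at most $8$. For $\omega\in O_5$ one has $\dim\calS_\omega=4$ by Proposition~\ref{orbitpairprop}(3), i.e.\ Lemma~\ref{L:O5}, so the second union has dimension at most $\dim(Z\cap O_5)+4\leq 4+4=8$, using the hypothesis $\dim(Z\cap O_5)\leq 4$. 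Taking the maximum, which is precisely \eqref{dimSR}, gives $\dim\calS_Z\leq 8<9=\dim\Gr_3V$, and we are done.

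I do not anticipate a substantive obstacle: the geometric work is all contained in Proposition~\ref{orbitpairprop} and Lemma~\ref{L:O5}. Two points deserve a sentence of care. First, that $\calS_Z$ is a constructible set, so that ``not dense'' is equivalent to ``dimension $<9$''; this holds because $\calS_Z$ is the image under the first projection of the incidence variety $\{(\Lambda,\Lambda',\omega)\mid\Lambda\neq\Lambda',\ \omega\in Z\cap\langle\Lambda,\Lambda'\rangle\}$, using Lemma~\ref{dim1lem}. Second, the estimate ``a family of finite (resp.\ $4$-dimensional) sets parametrized by a base of dimension $d$ has dimension $\leq d$ (resp.\ $\leq d+4$)'', which is the fiber-dimension theorem and is exactly the bookkeeping summarized in \eqref{dimSR}. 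So in the write-up I would simply cite \eqref{dimSR} together with the short argument of the first paragraph.
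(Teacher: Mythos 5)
Your proposal is correct and follows the paper's own argument: the paper likewise deduces $\dim\calS_Z\leq 8$ directly from \eqref{dimSR} under the stated hypotheses and concludes that $\Gr_3V\smallsetminus\calS_Z$ contains a nonempty Zariski open set, so $\pi_Z$ has degree $1$. Your added remarks on constructibility of $\calS_Z$ and the fiber-dimension bookkeeping are sound elaborations of what the paper leaves implicit.
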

\begin{proof}
  From the assumptions and \eqref{dimSR}, we have that $\dim \calS_Z\leq 8$.
  Thus $\Gr_3V\smallsetminus \calS_Z$ contains a nonempty Zariski open set and therefore $\pi_Z$ has degree $1$.
\end{proof}

\begin{corollary}
\label{wendy2}
 If\/ $Z$ does not meet the Grassmannian $\Gr_3V$ and  $\dim Z\leq 4$, then  $\pi_Z$ has degree $1$.
\end{corollary}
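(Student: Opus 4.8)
The plan is to deduce Corollary~\ref{wendy2} directly from Theorem~\ref{wendy1}. Theorem~\ref{wendy1} gives the conclusion that $\pi_Z$ has degree $1$ whenever $Z$ is a linear subspace disjoint from $\Gr_3 V$ satisfying the two numerical hypotheses $\dim Z < 9$ and $\dim(Z\cap O_5)\leq 4$. So it suffices to check that the single hypothesis $\dim Z\leq 4$ forces both of these. The first is immediate: $4 < 9$. For the second, since $Z\cap O_5$ is a subset of $Z$, we have $\dim(Z\cap O_5)\leq\dim Z\leq 4$, which is exactly the bound required. Hence Theorem~\ref{wendy1} applies and $\pi_Z$ has degree $1$ on $\Gr_3 V$.

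Concretely, I would write: \emph{By hypothesis $\dim Z\leq 4$, so $\dim Z< 9$. Moreover $Z\cap O_5\subseteq Z$, so $\dim(Z\cap O_5)\leq\dim Z\leq 4$. Both hypotheses of Theorem~\ref{wendy1} are satisfied, and therefore $\pi_Z$ has degree $1$.} That is the entire argument.

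There is essentially no obstacle here: the corollary is a specialization of the preceding theorem, and the only thing to verify is the trivial monotonicity of dimension under inclusion of subsets together with the inequality $4<9$. One might add a sentence of motivation — that $\dim Z\leq 4$ is the regime in which the crude dimension count in~\eqref{dimSR} already suffices, with room to spare, since $\dim(Z\cap O_5)+4\leq 8<9=\dim\Gr_3V$ and likewise $\dim(Z\cap O_0)\leq 4<9$ — but this is optional and the one-line deduction from Theorem~\ref{wendy1} is the cleanest presentation.
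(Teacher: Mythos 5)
Your deduction is correct and is exactly how the paper obtains Corollary~\ref{wendy2}: it is stated as an immediate specialization of Theorem~\ref{wendy1}, using $\dim Z\leq 4<9$ and $\dim(Z\cap O_5)\leq\dim Z\leq 4$. Nothing further is needed.
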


\subsection{Five-dimensional center}
\label{6sec}
Let $Z\subset\PP{\bwedge^3}V$ be a linear subspace such that the following three conditions hold,
\begin{enumerate}
 \item[({\it i})]  $\dim Z=5$,
 \item[({\it ii})] $\dim Z\cap O_5\geq 5$, which together with ({\it i}) is equivalent to $\dim Z\cap O_5= 5$, and
 \item[({\it iii})] $Z$ does not meet the Grassmannian $\Gr_3V$, so that  $Z \subset  O_5$.
\end{enumerate}

We establish the following result.

\begin{theorem}
\label{mainm=3theor}
  If\/ $Z\subset\PP{\bwedge^3}V$ is a linear subspace that does not meet the Grassmannian $\Gr_3V$,
   $\dim Z=5$, and the degree of $\pi_Z$ exceeds $1$, then $Z$ is self-adjoint.
\end{theorem}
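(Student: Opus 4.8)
The plan is to combine the orbit structure from Theorem~\ref{segrethm} with the fiber-bundle picture of Corollary~\ref{C:Bundle} and the dimension bound \eqref{dimSR}. By Corollary~\ref{wendy2}-type reasoning we already know $Z$ must satisfy conditions ($i$)--($iii$), so $Z\subset O_5$ is a $5$-dimensional linear subspace lying entirely in the orbit $O_5$. Since $\deg\pi_Z>1$, the set $\calS_Z$ is dense in $\Gr_3V$, so by \eqref{SR2} and \eqref{dimSR} we must have $\dim(Z\cap O_5)+4\geq 9$, i.e.\ $\dim(Z\cap O_5)=5$, consistent with ($ii$); moreover the family $\{\calS_\omega\mid \omega\in Z\}$ of four-dimensional Schubert varieties $\Omega_\omega=\Omega(\alpha_\omega,A_\omega)$ must sweep out a dense subset of $\Gr_3V$. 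The first step is therefore to analyze the map $Z\to\Fl$, $\omega\mapsto(\alpha_\omega,A_\omega)$, coming from Corollary~\ref{C:Bundle}: since $Z$ is $5$-dimensional and $\Fl=\Fl$ has dimension $9$ while each fiber $\PP(\alpha\wedge\bwedge^2A)$ of $L$ has dimension $5$, for the Schubert varieties $\Omega(\alpha_\omega,A_\omega)$ to cover $\Gr_3V$ the image of $Z$ in $\Fl$ must have dimension at least $5$; I would then argue that in fact this image is exactly a particular $5$-dimensional subvariety of $\Fl$, and conversely that $Z$ must lie in a single fiber of one of the two projections $\Fl\to\PP V$ or $\Fl\to\PP V^*$.

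The key dichotomy is: either all the hyperplanes $A_\omega$ (for $\omega\in Z$) coincide, or all the lines $\alpha_\omega$ coincide, or neither. The second step is to rule out, or rather to organize, these cases. If all $A_\omega$ equal a fixed hyperplane $A$, then $Z\subset\bwedge^3A$ by the minimality statement in Corollary~\ref{C:IntSpan}; but $\bwedge^3A\cong\bwedge^3\CC^4$ is five-dimensional and meets $\Gr_3A\subset\Gr_3V$ — indeed $\PP\bwedge^3A=\Gr_3A$ — contradicting ($iii$). So not all $A_\omega$ can agree, and by a dual argument (using $\wedge\omega$ in place of $\lrcorner\,\omega$) not all $\alpha_\omega$ can agree either. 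Hence the image of $Z$ in $\Fl$ dominates neither factor trivially; since it is $5$-dimensional inside the two $\PP^4$-bundle structure, I expect to show that the image surjects onto one of the two $\PP V$, $\PP V^*$ factors with one-dimensional fibers — i.e.\ generically $A_\omega$ is determined by $\alpha_\omega$ (or vice versa) — and that the $\PP^5$-fiber coordinate of $\omega$ over $(\alpha_\omega,A_\omega)$ is also determined. The upshot I am aiming for is a precise normal form: after the $\GL(V)$-action (Remark~\ref{schemerem}), $Z$ is the projectivization of a concrete $6$-dimensional subspace of $\bwedge^3\CC^6$.

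The third step is the heart of the matter: identifying that normal form and recognizing it as self-adjoint. Recall (Remark~\ref{genselfrem}, and the definition preceding Theorem~\ref{m=2iff}) that $Z$ is self-adjoint precisely when $Z\supseteq \CC\sigma\wedge\bwedge^{1}V$ for some symplectic $\sigma\in\bwedge^2V^*$ — wait, here $Z\subset\bwedge^3V$ and we need a symplectic form on $V^*$, so self-adjointness means $Z\supseteq\CC\tau\wedge\bwedge^1V=\{\tau\wedge v\mid v\in V\}$ for a rank-$6$ form $\tau\in\bwedge^2V$; that space has dimension $6$, matching $\dim Z=6$ as a vector space. So the goal is to show that the subspace we have isolated is exactly $\tau\wedge V$ for some nondegenerate $\tau$. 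Concretely, for $\omega=\alpha\wedge\sigma$ ranging over $Z$, the Schubert variety $\Omega_\omega$ is $\{\Lambda : \alpha\subset\Lambda\subset A\}$ and by the Corollary after Lemma~\ref{L:O5} the partner of $\Lambda\in\Omega_\omega$ is given by $\Lambda/\alpha\mapsto(\Lambda/\alpha)^{\angle_\sigma}$ inside $A/\alpha$; for these involutions on the various $\Omega_\omega$ to glue into the single correspondence $\Lambda\mapsto\Lambda^{\angle_\tau}$ forced on all of $\Gr_3V$ by $\deg\pi_Z>1$ and \eqref{mainidgen}-type rigidity, the forms $\sigma$ on $A/\alpha$ must all be the restrictions (contractions) of one global symplectic $\tau$ on $V^*$. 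Making this gluing argument precise — showing the pointwise-defined partner map $\calS_Z\to\Gr_3V$ extends to a single algebraic involution and that such an involution must be $\Lambda\mapsto\Lambda^{\angle_\tau}$ — is the step I expect to be the main obstacle; it is where one genuinely uses that $\deg\pi_Z>1$ (not merely $\dim\calS_Z=9$) and the compatibility \eqref{mainidgen}/\eqref{SR1} across all $\omega\in Z$. Once $\tau$ is produced, checking $Z=\tau\wedge V$ and that $\tau$ is nondegenerate (else $Z$ would meet $\Gr_3V$, contradicting ($iii$)) is a direct linear-algebra verification, and the definition preceding Theorem~\ref{m=2iff} then gives that $Z$ is self-adjoint.
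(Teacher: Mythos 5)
Your plan correctly identifies the target ($Z=\PP(\tau\wedge V)$ for a symplectic $\tau$, which has the right dimension) and some of the setup (the map $\omega\mapsto(\alpha_\omega,A_\omega)$ to $\Fl$, and the need to rule out the degenerate cases where all $\alpha_\omega$ or all $A_\omega$ coincide). But the central step --- showing that the various $2$-forms $\sigma_\omega$ in the factorizations $\omega=\alpha_\omega\wedge\sigma_\omega$ all come from a \emph{single} global $2$-form $\tau$ --- is exactly the step you defer as ``the main obstacle,'' and the gluing mechanism you sketch does not close it. Knowing $\deg\pi_Z>1$ gives a dense $\calS_Z$ but not a canonical involution of $\Gr_3V$: the generic fiber of $\pi_Z$ could a priori have more than two points, the ``partner'' of $\Lambda$ need not be unique or vary algebraically, and even granting an involution on each $\Omega_\omega$ separately you have no mechanism for comparing $\sigma_\omega$ with $\sigma_{\omega'}$ for distinct $\omega,\omega'\in Z$. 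The paper's mechanism is entirely different and crucially exploits the \emph{linearity} of $Z$ (which your plan uses only for dimension counts): for any two points $\omega,\omega'\in Z$ the whole line $\langle\omega,\omega'\rangle$ lies in $O_5$, and Lemma~\ref{petro} classifies such lines into three types; Corollary~\ref{moveH} plus the surjectivity of $Z\to\PP V$ and $Z\to\PP V^*$ forces every line between basis elements to have type (2), i.e.\ $\omega_i=\alpha_i\wedge\sigma$ and $\omega_j=\alpha_j\wedge\sigma$ for a \emph{common} $\sigma$ depending on the pair; and the Cartan-type Lemma~\ref{Cartanlemma} then patches these pairwise forms into one $\widetilde\sigma$ with $\omega_i=\alpha_i\wedge\widetilde\sigma$ for all six basis vectors, giving $Z=V\wedge\CC\widetilde\sigma$. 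None of this appears in your outline.

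There are also concrete errors in the parts you do argue. The surjectivity of $Z\to\PP V$ and $Z\to\PP V^*$ does not follow from merely ruling out the constant case: you must exclude images of intermediate dimension, which the paper does via Proposition~\ref{P:Compression} (a nonconstant morphism $\PP^5\to\PP^5$ is surjective); your ``I expect to show the image surjects onto one of the two factors'' is unsupported, and your assertion that it does so ``with one-dimensional fibers'' is dimensionally inconsistent with a $5$-dimensional source (it also contradicts your earlier claim that $Z$ lies in a single fiber of one of the projections). Moreover $A_\omega$ is a \emph{hyperplane} of $V=\CC^6$, so $\bwedge^3A_\omega\cong\bwedge^3\CC^5$ has dimension $10$, not $5$, and $\PP\bwedge^3A_\omega\neq\Gr_3A_\omega$; your contradiction in the case ``all $A_\omega$ coincide'' survives only because a $\PP^5$ inside that $\PP^9$ must still meet the $6$-dimensional $\Gr_3A_\omega$, but the reasoning as written is wrong. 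Finally, note that the conclusion of your step, an involution on each $\Omega_\omega$, concerns only $\Lambda$ with $\alpha_\omega\subset\Lambda\subset A_\omega$, and you would still need to produce $\tau$ and verify $Z=\tau\wedge V$; as it stands the proposal is an outline with the decisive argument missing.
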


The hypotheses imply that $Z \subset  O_5$.
We begin with a lemma about lines in $O_5$.
For this, $\omega_i$, $\sigma_i$, $\rho_i$, $\alpha_i$, $v_i$, $w_i$ for $i=1,2$, and  $v$ are vectors and not points in
projective space.

\begin{lemma}
\label{petro}
  Assume that $\omega_1, \omega_2\in O_5$ and the line they span lies in $O_5$.   
  If  $\omega_i=\alpha_i\wedge\sigma_i$ for $i=1,2$ as in Corollary~$\ref{C:IntSpan}$, then 
  one of the following cases holds.
\begin{enumerate}
\item $\langle\alpha_1\rangle=\langle\alpha_2\rangle$. 
\item $\alpha_1$ and $\alpha_2$ are linearly independent and
  $\langle\sigma_1\rangle\equiv\langle\sigma_2\rangle \mod \langle \alpha_1,\alpha_2\rangle$.
  There is a $2$-form $\sigma\in\bwedge^2 V$ such that, up to a scalar factor,
  $\omega_i=\alpha_i\wedge\sigma$ for $i=1,2$.

\item There exist $v,w_1,w_2,v_1,v_2\in V$ where $\alpha_1,\alpha_2,v,v_1,v_2$ are linearly independent
  with $\langle v,v_1,v_2\rangle=\langle v,v_1,w_1\rangle=\langle v,v_2,w_2\rangle$
  such that
  \[
    \omega_1\ =\ \alpha_1\wedge(\alpha_2 \wedge w_1 + v\wedge v_1)
      \qquad\mbox{and}\qquad
    \omega_2\ =\ \alpha_2\wedge(\alpha_1 \wedge w_2 + v\wedge v_2)\,.
  \]    
\end{enumerate} 
\end{lemma}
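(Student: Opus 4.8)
The plan is to analyze the structure of a line in $O_5$ spanned by $\omega_1=\alpha_1\wedge\sigma_1$ and $\omega_2=\alpha_2\wedge\sigma_2$ by tracking how the pair $(\alpha_\omega, A_\omega)$ of Corollary~\ref{C:IntSpan} varies along the line. First I would dispose of Case (1) by assuming $\langle\alpha_1\rangle\neq\langle\alpha_2\rangle$ for the rest of the argument, so that $\alpha_1,\alpha_2$ are linearly independent. Since every point $\omega(t)=\omega_1+t\omega_2$ on the line lies in $O_5$, it has a well-defined kernel line $\alpha_{\omega(t)}$ of $\wedge\omega(t)$; I would study the map $v\wedge\bigl(\alpha_1\wedge\sigma_1+t\,\alpha_2\wedge\sigma_2\bigr)=0$ as a condition on $v\in V$, and also use the dual description via $\lrcorner\,\omega(t)$. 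The key is that $A_\omega$ is, by Corollary~\ref{C:IntSpan}, the smallest subspace $W$ with $\omega\in\bwedge^3W$; so $A_{\omega_1}=\langle\alpha_1\rangle+A'_1$ where $\sigma_1$ has rank $4$ on a complement, and similarly for $\omega_2$. Collinearity forces $\omega_1+t\omega_2\in\bwedge^3 A_{\omega(t)}$ for all $t$, and I would extract from this that the four-dimensional spaces $A_{\omega_1}$ and $A_{\omega_2}$ must be closely related — either equal, or meeting in a three-dimensional space.

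The main case division I expect is exactly the dichotomy producing (2) and (3): either $A_{\omega_1}=A_{\omega_2}=:A$, a single four-space, or $\dim(A_{\omega_1}\cap A_{\omega_2})=3$. In the first subcase, both $\omega_1,\omega_2\in\bwedge^3 A$ with $A\cong\CC^4$, so $\bwedge^3A\cong A^*$ is two-dimensional... wait, $\bwedge^3\CC^4$ is $4$-dimensional; but $\omega_1=\alpha_1\wedge\sigma_1$ with $\alpha_1\in A$, and the whole line lies in the affine piece of $\PP\bwedge^3 A\smallsetminus\Gr_3 A$. Here I would invoke that $\Gr_3\CC^4\cong\PP^3=\PP\bwedge^3\CC^4$, so no line misses $\Gr_3 A$ entirely — contradiction unless the line is not contained in a single $\bwedge^3 A$. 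This pushes us toward: the generic point $\omega(t)$ on the line has its own $A_{\omega(t)}$, and these sweep out a pencil. Working modulo $\langle\alpha_1,\alpha_2\rangle$, I would show $\sigma_1\equiv\sigma_2$ as $2$-forms on $V/\langle\alpha_1,\alpha_2\rangle$ up to scalar, which lets me write $\omega_i=\alpha_i\wedge\sigma$ for a common $\sigma$ after rescaling — that is Case (2). When instead $\sigma_1\not\equiv\sigma_2$ mod $\langle\alpha_1,\alpha_2\rangle$, I would choose coordinates: write $\sigma_i\equiv v\wedge v_i \bmod\langle\alpha_1,\alpha_2\rangle$ after diagonalizing the pencil of $2$-forms $\{\sigma_1+t\sigma_2\}$ on the $4$-dimensional quotient (a pencil of skew forms on $\CC^4$ has a normal form), then lift back to $V$, absorbing the $\langle\alpha_1,\alpha_2\rangle$-components into terms $\alpha_1\wedge w_2$, $\alpha_2\wedge w_1$. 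The condition that every $\omega_1+t\omega_2$ stays decomposable-after-factoring (i.e. lies in $O_5$, not $O_0$ or $O_1$) imposes the collinearity constraint $\langle v,v_1,v_2\rangle=\langle v,v_1,w_1\rangle=\langle v,v_2,w_2\rangle$, which I would verify by expanding $\omega_1+t\omega_2$ and demanding its $\wedge(\cdot)$-kernel be nonzero for all $t$.

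The hard part will be the bookkeeping in Case (3): showing that the $5$-term expression with the three equal-span conditions is exactly what is forced, and that no further degeneration (which would collapse to Case (1) or (2)) has been missed. Concretely, the obstacle is controlling the $\langle\alpha_1,\alpha_2\rangle$-components $w_1,w_2$: a priori they are arbitrary vectors in $V$, and one must use the requirement that $\alpha_{\omega(t)}$ varies (rather than being constant, which is Case (1)) together with $\omega(t)\notin\Gr_3V\cup O_1$ to pin down the incidence $\langle v,v_1,w_1\rangle=\langle v,v_1,v_2\rangle$. I expect this to come down to computing $\wedge\bigl(\omega_1+t\omega_2\bigr)\colon V\to\bwedge^4 V$ in the basis $\alpha_1,\alpha_2,v,v_1,v_2$ (plus one more vector), setting its kernel-defining $4\times 6$ minors to vanish identically in $t$, and reading off the stated linear dependencies; it is routine but must be done carefully to confirm the list of cases is exhaustive. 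Once the normal form is in hand, Cases (1)--(3) are manifestly the only outcomes, completing the proof.
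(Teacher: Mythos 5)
Your computational core --- expanding the kernel condition $\alpha\wedge(\lambda\omega_1+\mu\omega_2)=0$ in a basis adapted to $\alpha_1,\alpha_2$ and a complement, and reading off linear conditions that must hold identically in $(\lambda,\mu)$ --- is exactly the engine of the paper's proof. But the route you propose for organizing the cases contains a genuine error. You treat $A_\omega$ as four-dimensional, whereas by Corollary~\ref{C:IntSpan} it is a \emph{hyperplane} of $V$ (five-dimensional): since $\bwedge^3 W\subset\Gr_3V$ for any four-dimensional $W$, a point of $O_5$ cannot lie in any $\bwedge^3\CC^4$. Consequently your ``first subcase'' ($A_{\omega_1}=A_{\omega_2}=A$, dismissed via $\Gr_3\CC^4=\PP\bwedge^3\CC^4$) rests on a false premise and reaches a spurious contradiction. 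Worse, that subcase is not vacuous: by Corollary~\ref{moveH}, on every line of type (3) the hyperplane $A_\omega$ is \emph{constant}, so $A_{\omega_1}=A_{\omega_2}$ is precisely where case (3) lives, and your plan would misroute or discard it. The dichotomy between (2) and (3) is not governed by whether $A_{\omega_1}=A_{\omega_2}$.

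The second gap is that the structural input for case (3) is asserted rather than derived. Writing $\omega_1=e_1\wedge(e_2\wedge w_1+\rho_1)$, $\omega_2=e_2\wedge(e_1\wedge w_2+\rho_2)$ with $\rho_i\in\bwedge^2W$, the decisive facts are that the $W$-component $v$ of the kernel vector of $\wedge(\lambda\omega_1+\mu\omega_2)$ satisfies $v\wedge\rho_1=v\wedge\rho_2=0$, and the correct dichotomy is whether this pair of linear equations forces $v=0$ (leading, via the remaining equation $\mu a\rho_2-\lambda b\rho_1=v\wedge(\mu w_2-\lambda w_1)$, to $\langle\rho_1\rangle=\langle\rho_2\rangle$ and case (2)) or admits $v\neq 0$ (forcing both $\rho_i$ decomposable with the common factor $v$, hence $\rho_i=v_i\wedge v$). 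Your appeal to a Kronecker normal form for the pencil $\{\lambda\rho_1+\mu\rho_2\}$ to write $\rho_i=v\wedge v_i$ presupposes this conclusion instead of proving it. Finally, the remaining bookkeeping you defer as ``routine'' is where several subcases must actually be excluded --- e.g.\ $v\neq 0$ together with $\langle\rho_1\rangle=\langle\rho_2\rangle$ forces $\sigma_1$ decomposable (impossible in $O_5$), and $a=b\equiv 0$ forces $v,w_1,w_2$ proportional (again impossible) --- and where the three equal-span conditions of case (3) are extracted. As written, the proposal does not establish that the three listed cases are exhaustive.
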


\begin{proof}
  Suppose that (1) does not hold, so that $\alpha_1$ and $\alpha_2$ are linearly independent.
  Let us suppose that $\alpha_1=e_1$ and $\alpha_2=e_2$.
  Let $\defcolor{U}:= \langle e_1,e_2\rangle$ and $\defcolor{W}=\langle e_3,\dotsc,e_6\rangle\simeq\CC^4$, which are
  transversal. 
  We express $\sigma_1, \sigma_2$ in terms of $e_2$ and $e_1$ respectively.
  We have
  \begin{equation}
    \label{Eq:specialForm}
    \begin{array}{l}
     \omega_1\ =\ e_1\wedge\sigma_1\ =\ e_1\wedge (e_2\wedge w_1 + \rho_1)\,, \\
     \omega_2\ =\ e_2\wedge\sigma_2\ =\ e_2\wedge(e_1\wedge w_2+\rho_2)\,, 
    \end{array}
  \end{equation}
  where $\defcolor{w_1},\defcolor{w_2}\in W$, and $\defcolor{\rho_1},\defcolor{\rho_2}\in \bwedge^2W$ are the terms in  
  $\sigma_1,\sigma_2$ that do not contain $e_2$ and $e_1$ respectively.
  For $i=1,2$,  since $\sigma_i$ is indecomposable, neither $\rho_i$ nor $w_i\wedge\rho_i$ is zero.

  Let $\lambda,\mu\in\CC$ be nonzero.
  Since $\lambda\omega_1 + \mu\omega_2\in O_5$, it has the form $\alpha\wedge\sigma$, where $0\neq\alpha\in V$  is defined
  up to a scalar by $\alpha\wedge(\lambda\omega_1 + \mu\omega_2)=0$.
  Let us write $\alpha=\defcolor{a}e_1+\defcolor{b}e_2+\defcolor{v}$, where $v\in W$.
  The vector $v$ and the coefficients $a$ and $b$ are functions of $\lambda$ and $\mu$, up to a common scalar, and at least
  one of $a$, $b$, and $v$ is nonzero. 
  We use~\eqref{Eq:specialForm} to rewrite $\alpha\wedge(\lambda\omega_1 + \mu\omega_2)=0$ as
  \[
    (ae_1+be_2+v)\wedge (\lambda e_{12}\wedge w_1 +\lambda e_1\wedge\rho_1
                        -\mu e_{12}\wedge w_2 + \mu e_2\wedge\rho_2)\ =\ 0\,.
  \]
  Recall that $e_{12}=e_1\wedge e_2$.
  Expanding gives
  \begin{equation}\label{Eq:messyForm}
    e_{12}\wedge(\mu a\rho_2 - \lambda b \rho_1 + v\wedge(\lambda w_1-\mu w_2))\ -\
    \lambda e_1\wedge v\wedge \rho_1\ -\
    \mu e_2\wedge v\wedge \rho_2\ =\ 0\,.
  \end{equation}
  These summands lie in $e_{12}\wedge\bwedge^2 W$, $e_1\wedge\bwedge^3 W$, and $e_2\wedge\bwedge^3 W$, respectively, and are
  therefore linearly independent.
  This gives the following three equations,
  \begin{eqnarray}
     \mu  a \rho_2 - \lambda b \rho_1  &=& v\wedge ( \mu w_2 - \lambda w_1)\,,  \label{eq:2} \\ 
      v \wedge \rho_1  &=& 0\,, \qquad\mbox{and}\label{eq:3} \\ 
      v \wedge \rho_2 &=& 0\,. \label{eq:4} 
  \end{eqnarray}
  The last two are linear equations for $v\in W$.
  Note that each $\rho_i$ is either decomposable (lies in $\Gr_2 W$) or indecomposable, corresponding to having
  rank 2 or rank 4.
  If either $\rho_1$ or $\rho_2$ is indecomposable and hence of rank 4, then $v=0$ is the only solution.

  Suppose first that $v=0$ is a solution to~\eqref{eq:3} and~\eqref{eq:4}.
  Then~\eqref{eq:2} implies that $\langle\rho_1\rangle=\langle\rho_2\rangle$.
  (We cannot have $ab=0$, for then~\eqref{eq:2} and $(a,b)\neq(0,0)$ implies that one of $\rho_1$ or $\rho_2$ is zero.)
  Scaling $\omega_1$ and $\omega_2$ if necessary, $\rho_1=\rho_2=\rho$, and using~\eqref{Eq:specialForm} 
  we may set $\sigma=e_2\wedge w_1 + e_1\wedge w_2 + \rho$.
  Then Case (2) holds.

  Suppose that~\eqref{eq:3} and~\eqref{eq:4} admit a nonzero solution, $v$.
  Thus $\rho_1$ and $\rho_2$ are each decomposable, and they have the form
  $\rho_i=v_i\wedge v$, for nonzero $v_1,v_2\in W$.
  Then
   \begin{equation}\label{Eq:linCombFormula}
    \lambda \omega_1 + \mu \omega_2\ =\ 
    e_{12}\wedge (\lambda w_1 - \mu w_2) + (\lambda e_1\wedge v_1 + \mu e_2\wedge v_2)\wedge v.
   \end{equation}
  This is indecomposable for $(\lambda,\mu)\neq(0,0)$.

  Suppose that $\langle\rho_1\rangle=\langle\rho_2\rangle$, which corresponds to a 2-plane $\defcolor{H}\subset W$.
  Then~\eqref{eq:2} for all $\lambda,\mu$ implies that $w_1,w_2\in H$.
  In particular, $\rho_1= v'\wedge w_1$, for some $v'\in H$.
  But then $\sigma_1=(e_1+v')\wedge w_1$, which contradicts its being indecomposable.

  Now suppose that $\rho_1$ and $\rho_2$ are linearly independent.
  If $\defcolor{H_i}\in\Gr_2W$ is the 2-plane corresponding to $\rho_i$, then
  $\langle v\rangle=H_1\cap H_2$, and thus $v$ is independent of $\lambda,\mu$ (up to a scalar),
  and we also see that $v,v_1,v_2$ are linearly independent.
  We establish Case (3) by showing that $\langle v,v_1,v_2\rangle=\langle v,v_1,w_1\rangle=\langle v,v_2,w_2\rangle$.

  Consider the 2-forms $\mu  a \rho_2 - \lambda b \rho_1$ for all $\lambda,\mu$.
  If these are all $0$, then $a=b=0$ as  $\rho_1$ and $\rho_2$ are linearly independent.
  Then~\eqref{eq:2} implies that $v,w_1,w_2$ are proportional, which implies that $\sigma_1$ and $\sigma_2$ are
  decomposable, a contradiction.

  Thus, for general $\lambda,\mu$, the 2-form $\mu  a \rho_2 - \lambda b \rho_1\in\bwedge^2\langle v,v_1,v_2\rangle$ is
  nonzero.
  By~\eqref{eq:2}, for all $\lambda,\mu$ we have that $\mu w_2-\lambda w_1\in\langle v,v_1,v_2\rangle$.
  Since $\sigma_1$ is indecomposable, $w_1$ is independent of $v,v_1$, and the same holds for
  $w_2,v,v_2$, which completes the proof.
\end{proof} 

A line in $O_5$ has \defcolor{type $(i)$} if it satisfies condition $(i)$ of Lemma \ref{petro}. 

\begin{corollary}
  \label{moveH}
  Let $\ell\subset O_5$ be a line.
  If $\ell$ has type $(1)$, then $\alpha_\omega$ is the same point in $\PP V$ for every
  $\omega\in\ell$.
  If $\ell$ has type $(3)$, then $A_\omega\in\PP V^*$ is the same hyperplane for every $\omega\in\ell$.
\end{corollary}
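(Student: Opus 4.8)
The plan is to run through the three types of lines in $O_5$ classified by Lemma~\ref{petro}, discarding type~$(2)$ (for which nothing is claimed) and, in each of the two remaining types, exhibiting a subspace of $V$ that manifestly does not depend on the chosen point of $\ell$, then identifying it with $\alpha_\omega$, respectively $A_\omega$, by means of the kernel and minimality characterisations from Lemma~\ref{L:Kernels} and Corollary~\ref{C:IntSpan}.

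For a line $\ell=\langle\omega_1,\omega_2\rangle$ of type~$(1)$, write $\omega_i=\alpha_i\wedge\sigma_i$ as in Corollary~\ref{C:IntSpan}. Since $\langle\alpha_1\rangle=\langle\alpha_2\rangle$, after absorbing a scalar into $\sigma_2$ I may assume $\alpha_1=\alpha_2=:\alpha$. Then every point $\omega=\lambda\omega_1+\mu\omega_2$ of $\ell$ equals $\alpha\wedge(\lambda\sigma_1+\mu\sigma_2)$, so $\alpha\wedge\omega=0$; that is, $\alpha$ lies in the kernel of $\wedge\omega\colon V\to\bwedge^4V$. Since $\ell\subset O_5$, Lemma~\ref{L:Kernels} says this kernel is one-dimensional, so it equals $\langle\alpha\rangle=\alpha_\omega$, which is independent of $(\lambda,\mu)$.

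For a line $\ell$ of type~$(3)$, Lemma~\ref{petro} supplies vectors $v,w_1,w_2,v_1,v_2\in V$ with $\alpha_1,\alpha_2,v,v_1,v_2$ linearly independent, $\langle v,v_1,v_2\rangle=\langle v,v_1,w_1\rangle=\langle v,v_2,w_2\rangle$, and $\omega_1=\alpha_1\wedge(\alpha_2\wedge w_1+v\wedge v_1)$, $\omega_2=\alpha_2\wedge(\alpha_1\wedge w_2+v\wedge v_2)$. I set $W_0:=\langle\alpha_1,\alpha_2,v,v_1,v_2\rangle$, a five-dimensional subspace of $V$, and note that the displayed equalities of three-dimensional spaces force $w_1,w_2\in\langle v,v_1,v_2\rangle\subset W_0$. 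Hence $\omega_1,\omega_2\in\bwedge^3W_0$, so $\omega\in\bwedge^3W_0$ for every $\omega\in\ell$. By Corollary~\ref{C:IntSpan}, $A_\omega$ is the smallest subspace $W\subset V$ with $\omega\in\bwedge^3W$, so $A_\omega\subseteq W_0$; as $\omega\in O_5$ forces $A_\omega$ to be a hyperplane (Lemma~\ref{L:Kernels}), we get $A_\omega=W_0$ for every $\omega\in\ell$.

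The argument is essentially bookkeeping once Lemma~\ref{petro} is in hand. The one place where care is needed is the dimension count in type~$(3)$: one must use that every point of $O_5$ has a five-dimensional $A_\omega$, so that the inclusion $A_\omega\subseteq W_0$ with $\dim W_0=5$ is forced to be an equality. This is precisely the combination of Lemma~\ref{L:Kernels} (the kernel of $\lrcorner\,\omega$ is a line in $V^*$, hence $A_\omega$ is a hyperplane in $V$) with the minimality statement of Corollary~\ref{C:IntSpan}.
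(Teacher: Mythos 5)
Your proposal is correct and follows essentially the same route as the paper: for type $(1)$ the claim is read off from the normal form $\omega=\alpha\wedge(\lambda\sigma_1+\mu\sigma_2)$ together with the one-dimensionality of $\ker(\wedge\omega)$, and for type $(3)$ from the normal form of Lemma~\ref{petro}(3) together with the characterization of $A_\omega$ as the unique hyperplane with $\omega\in\bwedge^3A_\omega$. You simply spell out the details (e.g.\ that $w_1,w_2\in\langle v,v_1,v_2\rangle$ and the dimension count forcing $A_\omega=W_0$) that the paper's terser proof leaves implicit.
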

\begin{proof}
  The claim about lines of type (1) follows from their definition and Lemma~\ref{petro}(1).
  Suppose $\ell$ has type (3).
  Recall that for $\omega\in O_5$, $A_\omega$ is the unique hyperplane of $V$ with $\omega\in\bwedge^3A_\omega$.
  By the normal form for points on a line of type (3) from Lemma~\ref{petro}(3), we see that
  $A_\omega=\langle \alpha_1, \alpha_2, v, v_1, v_2\rangle$ for all $\omega\in\ell$.
\end{proof}

Now let us define
\begin{equation}
  \label{EZ}
  \begin{array}{rcl}
    E_Z&:=& \{  \alpha_\omega\ \in\ \PP V \mid \mbox{ for\ } \omega\in Z\}\,,\ \mbox{ and}\\
    F_Z&:=& \{ A_\omega\ \in\ \PP V^* \mid \mbox{ for\ } \omega\in Z\}\,.
  \end{array}
\end{equation}

\begin{lemma}\label{L>4}
  If $Z$ is a linear subspace of $\PP{\bwedge^3}V$ of dimension five with $Z\subset O_5$ such that the degree of $\pi_Z$
  exceeds $1$, then $E_Z=\PP V$ and $F_Z=\PP V^*$. 
\end{lemma}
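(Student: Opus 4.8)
The goal is to show that if $Z\subset O_5$ has dimension five and $\pi_Z$ has degree $>1$, then the sets $E_Z$ and $F_Z$ defined in \eqref{EZ} are all of $\PP V$ and $\PP V^*$ respectively. The natural strategy is contrapositive: assume one of these sets is a proper subvariety, and show that $\pi_Z$ must then be degree one by bounding $\dim\calS_Z$. By the symmetry $\omega\mapsto\alpha_\omega$ versus $\omega\mapsto A_\omega$ (which is the duality $V\leftrightarrow V^*$, $\bwedge^3 V\leftrightarrow\bwedge^3 V^*$, under which $O_5$ is self-dual and $\pi_Z$ behaves well), it suffices to treat, say, the case $E_Z\subsetneq\PP V$ and then invoke duality for $F_Z$; one should spell this duality out briefly since the statement asserts both equalities simultaneously.

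\textbf{Key steps.} First, I would analyze the map $Z\to\PP V\times\PP V^*$, $\omega\mapsto(\alpha_\omega,A_\omega)$, which by Corollary~\ref{C:IntSpan} lands in $\Fl$ and is the restriction to $Z$ of the bundle structure of Corollary~\ref{C:Bundle}; call its image $\Phi_Z\subset\Fl$, so that $E_Z$ and $F_Z$ are the projections of $\Phi_Z$. The second step is to understand how $\alpha_\omega$ and $A_\omega$ vary along lines in $Z$: this is exactly what Corollary~\ref{moveH} records — along a type (1) line $\alpha_\omega$ is constant, along a type (3) line $A_\omega$ is constant, and (from Lemma~\ref{petro}(2)) along a type (2) line both $\alpha_\omega$ and the $2$-form $\sigma$ vary but $A_\omega=\langle\alpha_1,\alpha_2\rangle+(\text{support of }\sigma)$ moves as well. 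The third step is the dimension count for $\calS_Z$: by \eqref{SR2} and Lemma~\ref{L:O5}, $\calS_Z=\bigcup_{\omega\in Z}\calS_\omega$ with each $\calS_\omega$ a dense subset of the Schubert variety $\Omega_\omega=\{\Lambda\mid\alpha_\omega\subset\Lambda\subset A_\omega\}$, which is a copy of $\Gr_2\CC^4$ of dimension $4$. Thus $\calS_Z$ is covered by the family $\{\Omega(\alpha,A)\mid(\alpha,A)\in\Phi_Z\}$, and
\[
  \dim\calS_Z\ \leq\ \dim\Phi_Z\ +\ 4\,.
\]
Since $\dim\Gr_3 V=9$, degree $>1$ forces $\dim\calS_Z=9$ (it is a closed subvariety whose complement would otherwise be nonempty open), hence $\dim\Phi_Z\geq 5$. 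But $\dim Z=5$ and $\omega\mapsto(\alpha_\omega,A_\omega)$ has at least one-dimensional fibers in a key case — more precisely, I would argue that if $E_Z\neq\PP V$ then the fibers of $Z\to E_Z$ together with the structure of lines of type (1) force $\dim\Phi_Z<5$, a contradiction. Concretely: if $E_Z$ is a proper subvariety of $\PP V$, then $\dim E_Z\leq 4$, and one must show the fiber of $\Phi_Z\to E_Z$ over a generic $\alpha$ has dimension $\leq 5-\dim E_Z-1$, i.e. that $\dim\Phi_Z\le 4$. This requires controlling, for fixed generic $\alpha\in E_Z$, the hyperplanes $A_\omega$ arising from $\omega\in Z$ with $\alpha_\omega=\alpha$; the locus $\{\omega\in Z:\alpha_\omega=\alpha\}$ is $Z\cap(\PP(\alpha\wedge\bwedge^2 V))$, a linear slice, and one analyzes its dimension and the resulting spread of $A_\omega$ using Lemma~\ref{petro} to see that it cannot produce a $5$-dimensional $\Phi_Z$ unless $E_Z=\PP V$.

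\textbf{Main obstacle.} The delicate point is the last dimension estimate: translating "$E_Z\subsetneq\PP V$" into "$\dim\calS_Z\leq 8$." The crude bound $\dim\calS_Z\leq\dim\Phi_Z+4\leq\dim Z+4=9$ is not good enough on its own, so one genuinely needs to exploit that the assignment $\omega\mapsto(\alpha_\omega,A_\omega)$ drops dimension — equivalently, that $Z$, being linear, contains many lines, and Lemma~\ref{petro} severely constrains how $(\alpha_\omega,A_\omega)$ can move along them. I expect the cleanest route is: show that if $E_Z\ne \PP V$ then through a generic $\omega\in Z$ there is a positive-dimensional family of lines of type (1) in $Z$ (along which $\alpha_\omega$ is pinned), forcing the fiber of $Z\to E_Z$ to be positive-dimensional and hence $\dim E_Z\le 4$ AND $\dim\Phi_Z\le\dim Z-1=4$; then $\dim\calS_Z\le 8$ and $\pi_Z$ has degree $1$, contradiction. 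The symmetric argument via duality handles $F_Z=\PP V^*$. Making the "positive-dimensional family of type (1) lines" claim precise — essentially a statement that a proper linear section of the incidence correspondence $\{(\omega,\alpha):\alpha\wedge\omega=0\}$ cannot have all its $\alpha$-fibers zero-dimensional while still filling dimension five — is the technical heart, and is where I would spend the real work.
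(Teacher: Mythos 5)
There is a genuine gap, and it sits exactly where you locate it: you never establish that $E_Z\subsetneq\PP V$ forces $\dim\calS_Z\leq 8$. The crude bound $\dim\calS_Z\leq\dim\Phi_Z+4\leq 9$ is indeed useless, and your proposed repair does not close the hole: even if the fibers of $Z\to E_Z$ are positive-dimensional (linear slices $Z\cap\PP(\alpha\wedge\bwedge^2V)$ containing type~(1) lines), the map $Z\to\Phi_Z\subset\Fl$ could still be generically finite, because the points of a fiber of $Z\to E_Z$ may have pairwise distinct hyperplanes $A_\omega$. So "$\dim E_Z\leq 4$ and positive-dimensional fibers over $E_Z$" does not yield $\dim\Phi_Z\leq 4$; a $5$-dimensional $\Phi_Z$ projecting with $1$-dimensional fibers onto a $4$-dimensional $E_Z$ is perfectly consistent with everything you prove. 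The incidence-correspondence statement you flag as "the technical heart" is thus still the entire problem.

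The idea you are missing is much softer and makes the whole analysis unnecessary: since $Z\subset O_5$, the assignment $\omega\mapsto\alpha_\omega=\ker(\wedge\omega)$ is a \emph{morphism} from $Z\simeq\PP^5$ to $\PP V\simeq\PP^5$, and a nonconstant morphism $\PP^r\to\PP^r$ is automatically surjective (Proposition~\ref{P:Compression}: otherwise one could project from a point off the image to get $r$ degree-$d$ forms, $d>0$, with no common zero on $\PP^r$, which is impossible). Hence $E_Z$ is either a single point or all of $\PP V$ --- there is no intermediate case to dimension-count. If $E_Z=\{\alpha\}$, then by Corollary~\ref{C:IntSpan} every $\Lambda\in\calS_Z$ contains $\alpha$, so $\calS_Z$ lies in a proper Schubert subvariety of $\Gr_3V$ and $\pi_Z$ has degree $1$, a contradiction. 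The identical argument applied to $\omega\mapsto A_\omega$ gives $F_Z=\PP V^*$; no duality apparatus is needed. Your setup (the map to $\Fl$, the covering of $\calS_Z$ by the $\Omega(\alpha,A)$) is all correct and consistent with the paper, but without the constant-or-surjective dichotomy the proof does not go through.
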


The proof we give uses the following fact about maps between projective spaces.

\begin{proposition}\label{P:Compression}
  If $\phi\colon\PP^r\to\PP^r$ is a nonconstant map, then it is onto.
\end{proposition}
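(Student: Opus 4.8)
The plan is to use the standard description of morphisms out of projective space together with the projective dimension theorem, reducing the claim to the statement that every point of the target has nonempty preimage. Since $\mathrm{Pic}(\PP^r)$ is generated by $\mathcal O(1)$, a morphism $\phi\colon\PP^r\to\PP^r$ is given in homogeneous coordinates by $\phi=[F_0:\cdots:F_r]$, where $F_0,\dots,F_r$ are homogeneous polynomials of a common degree $d$ having no common zero in $\PP^r$; this last condition is exactly what guarantees that $\phi$ is defined everywhere. As $\phi$ is nonconstant, $d\geq 1$. To prove surjectivity it suffices to show that $\phi^{-1}(p)\neq\emptyset$ for every $p\in\PP^r$.

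First I would record that $F_0,\dots,F_r$ are linearly independent. Indeed, a nontrivial relation $\sum_i c_iF_i=0$ would let us solve for some $F_k$ as a combination of the remaining $r$ forms, so that the common zero locus of all the $F_i$ would coincide with that of the $r$ forms $\{F_i\mid i\neq k\}$, since $F_k$ vanishes wherever these do. But $r$ hypersurfaces in $\PP^r$ always meet, by the projective dimension theorem, which contradicts the absence of a common zero. Hence no such relation exists.

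Next, fix $p\in\PP^r$ and choose linearly independent linear forms $L_1,\dots,L_r$ on the target that all vanish at $p$, so that $\{L_1=\dots=L_r=0\}=\{p\}$. Setting $G_j:=L_j(F_0,\dots,F_r)$, each $G_j$ is homogeneous of degree $d$, and it is nonzero because $L_j\neq 0$ and the $F_i$ are linearly independent. Then $\phi^{-1}(p)=\{G_1=\dots=G_r=0\}$ is the intersection of $r$ hypersurfaces in $\PP^r$. Equivalently, in the cone $\CC^{r+1}$ the common zero locus of $G_1,\dots,G_r$ has every irreducible component of dimension at least $(r+1)-r=1$ and contains the origin, so it contains a nonzero point; this gives a point of $\phi^{-1}(p)$. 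As $p$ was arbitrary, $\phi$ is onto.

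The only substantive input is the projective dimension theorem, asserting the nonemptiness of an intersection of $r$ hypersurfaces in $\PP^r$, which is invoked twice. The one step demanding care is the linear independence of the $F_i$: it is precisely what prevents the auxiliary forms $G_j$ from degenerating to zero, and it is itself forced by the hypothesis that $\phi$ is a genuine morphism rather than merely a rational map, so I expect this to be the main point to get right.
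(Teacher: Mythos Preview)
Your proof is correct and rests on the same core fact as the paper's: $r$ homogeneous forms of positive degree on $\PP^r$ always have a common zero. The paper packages this as a proof by contradiction---if some $x$ is missed, compose $\phi$ with linear projection from $x$ to obtain a morphism $\psi\colon\PP^r\to\PP^{r-1}$ given by $r$ forms of positive degree with no common zero, which is impossible---whereas you argue directly that each fiber $\phi^{-1}(p)$ is nonempty. Your detour through the linear independence of the $F_i$ is correct but not actually needed: even if some $G_j$ were identically zero, the common zero locus of the remaining (fewer than $r$) forms would still be nonempty, so the conclusion survives. The contradiction route buys a slightly shorter argument by sidestepping this issue entirely; your direct route is marginally more informative in that it exhibits surjectivity pointwise.
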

\begin{proof}
  Suppose that $\phi(\PP^r)\neq\PP^r$.
  Since the image is closed, we may compose $\phi$ with the linear projection from a point $x\not\in\phi(\PP^r)$,
  obtaining a map $\psi\colon\PP^r\to\PP^{r-1}$.
  This is given by $r$ homogeneous forms $f_1,\dotsc,f_r$ of the same degree $d$ with no common zeroes;
  for $z\in\PP^r$, $\psi(z)=[f_1(z),\dotsc,f_r(z)]$.
  We must have $d>0$, as $\phi$ and hence $\psi$ is nonconstant.
  This contradicts $f_1,\dotsc,f_r$ having no common zeroes,
  as $r$ forms of degree $d$ define a subvariety in $\PP^r$ of codimension at most $r$.
\end{proof}

\begin{proof}[Proof of Lemma~\ref{L>4}]
  Recall the map $O_5\to\Fl$ that sends $\omega$ to the flag $\alpha_\omega\subset A_\omega$.
  Then $E_Z$ is the image of $Z$ under the further map to  $\PP V$ and $F_Z$ is its image under the map to
  $\PP V^*$. 
  As $Z$, $\PP V$, and $\PP V^*$ are all projective spaces of dimension five, for each of  $\PP V$ and $\PP V^*$,
  the image of $Z$ is either a point, or the map is surjective.

  By Corollary~\ref{C:IntSpan}, if $\Lambda\in\calS_\omega$ for  $\omega\in O_5$, then
  $\alpha_\omega\subset\Lambda\subset A_\omega$.
  If $E_Z$ is a point $\alpha$, then $\calS_Z\subset\{\Lambda\in\Gr_3V\mid \alpha\subset\Lambda\}$, which is a 
  proper subvariety of $\Gr_3V$, and thus $\pi_Z$ has degree 1.
  Similarly, if $F_Z$ is a point, then $\pi_Z$ has degree 1.
\end{proof}

We have another technical lemma.

\begin{lemma} \label{Cartanlemma}
   Given $k{+}1$ linearly independent elements $\{\alpha_i\}_{i=1}^{k+1}$ in $V$, if $\rho\in \bwedge^2V$ satisfies
\begin{equation}
 \label{Cartaneq1}
 \rho\ \equiv\ 0 \mod \langle \alpha_i, \alpha_{k+1}\rangle, \quad \forall i\in\{1,\ldots, k\}\,,
\end{equation}
then up to a nonzero constant, 
\begin{equation}
\label{Cartaneq2}
\rho\ \equiv\ \left\{ \begin{array}{rcl}
          \alpha_1\wedge\alpha_2 \mod\alpha_{k+1}& & k=2\,,\\
          0\mod\alpha_{k+1} & &k>2\,.
        \end{array}\right.
\end{equation}
\end{lemma}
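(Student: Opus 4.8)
The statement is a linear-algebra fact about $2$-forms, so the plan is to work concretely in coordinates adapted to the $\alpha_i$. First I would extend $\{\alpha_1,\dots,\alpha_{k+1}\}$ to a basis $\{\alpha_1,\dots,\alpha_n\}$ of $V$ and expand $\rho$ in the corresponding basis $\{\alpha_i\wedge\alpha_j\}_{i<j}$ of $\bwedge^2V$, say $\rho=\sum_{i<j}c_{ij}\,\alpha_i\wedge\alpha_j$. The congruence condition~\eqref{Cartaneq1} then has a clean meaning: writing $\bar\rho$ for the image of $\rho$ in $\bwedge^2(V/\langle\alpha_i,\alpha_{k+1}\rangle)$, the condition $\rho\equiv0\bmod\langle\alpha_i,\alpha_{k+1}\rangle$ says that every monomial $\alpha_p\wedge\alpha_q$ with $p,q\notin\{i,k+1\}$ has coefficient zero. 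So from the $i$-th condition we learn $c_{pq}=0$ whenever $\{p,q\}\cap\{i,k+1\}=\emptyset$.

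The key step is to intersect these vanishing conditions over all $i\in\{1,\dots,k\}$. A monomial $\alpha_p\wedge\alpha_q$ survives (i.e.\ $c_{pq}$ is not forced to vanish) only if for every $i\in\{1,\dots,k\}$ we have $\{p,q\}\cap\{i,k+1\}\neq\emptyset$. If $k+1\in\{p,q\}$ this is automatic, so all monomials $\alpha_p\wedge\alpha_{k+1}$ may be nonzero — but these are exactly the terms that are $\equiv0\bmod\alpha_{k+1}$, hence irrelevant to the conclusion. If $k+1\notin\{p,q\}$, then we need $\{p,q\}$ to meet $\{i\}$ for every $i=1,\dots,k$, i.e.\ $\{1,\dots,k\}\subseteq\{p,q\}$; since $|\{p,q\}|=2$ this is possible precisely when $k\le2$, and when $k=2$ it forces $\{p,q\}=\{1,2\}$. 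Thus for $k>2$ every monomial not involving $\alpha_{k+1}$ has vanishing coefficient, giving $\rho\equiv0\bmod\alpha_{k+1}$; for $k=2$ the only such surviving monomial is $\alpha_1\wedge\alpha_2$, so $\rho\equiv c_{12}\,\alpha_1\wedge\alpha_2\bmod\alpha_{k+1}$, which is the claimed form up to the constant $c_{12}$. (For $k=1$ the hypothesis is vacuous and there is nothing of this type to prove; the relevant cases in the paper are $k\ge2$.)

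I do not expect a serious obstacle here: the whole argument is bookkeeping once the right coordinates are chosen, and the only thing to be careful about is the clean translation of "$\rho\equiv0\bmod\langle\alpha_i,\alpha_{k+1}\rangle$" into "the coefficients $c_{pq}$ with $p,q\notin\{i,k+1\}$ vanish," together with noting that terms containing $\alpha_{k+1}$ are invisible modulo $\alpha_{k+1}$ and so never obstruct~\eqref{Cartaneq2}. If one prefers a coordinate-free phrasing, the same computation can be run by contracting $\rho$ with covectors dual to the $\alpha_i$, but the basis expansion is the most transparent route and is what I would write up.
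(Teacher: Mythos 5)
Your argument is correct, but it takes a genuinely different route from the paper's. The paper's proof is coordinate-free: it unpacks each hypothesis as $\rho=\alpha_i\wedge\beta_i+\alpha_{k+1}\wedge\gamma_i$, subtracts two such expressions to get $\alpha_i\wedge\beta_i-\alpha_j\wedge\beta_j+\alpha_{k+1}\wedge(\gamma_i-\gamma_j)=0$, and invokes the classical Cartan lemma to conclude $\beta_i\in\langle\alpha_i,\alpha_j,\alpha_{k+1}\rangle$; for $k>2$ varying $j$ pins $\beta_i$ down to $\langle\alpha_i,\alpha_{k+1}\rangle$, while for $k=2$ a short extra step identifies the residual term $c\,\alpha_1\wedge\alpha_2$. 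You instead extend $\{\alpha_1,\dots,\alpha_{k+1}\}$ to a basis, translate the $i$-th congruence into the vanishing of every coefficient $c_{pq}$ with $\{p,q\}\cap\{i,k+1\}=\emptyset$, and intersect these conditions combinatorially; this correctly shows that the only monomial avoiding $\alpha_{k+1}$ that can survive is $\alpha_1\wedge\alpha_2$ when $k=2$, and none when $k>2$. Your version is more elementary and self-contained (no appeal to the Cartan lemma), at the cost of choosing a basis; the two are comparable in length. One remark that applies equally to both arguments: the constant in the $k=2$ case (your $c_{12}$, the paper's $c$) may vanish, so the phrase ``up to a nonzero constant'' in the statement should be read as ``for some constant $c$'' --- which is exactly how the lemma is applied in the proof of Theorem~\ref{mainm=3theor}.
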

\begin{proof}
 From \eqref{Cartaneq1} it follows that for any $i$ there exist $\beta_i, \gamma_i$ in $V$ such that
 \[
   \rho\ =\ \alpha_i\wedge\beta_i+\alpha_{k+1}\wedge\gamma_i\,.
 \]
 Therefore
 for any $1\leq i \neq j \leq k$
 \begin{equation}\label{Eq:isZero}
   \alpha_{i}\wedge\beta_{i}-\alpha_{j}\wedge\beta_{j}+\alpha_{k+1}\wedge(\gamma_i-\gamma_j)\ =\ 0\,.
 \end{equation}
 Since $\alpha_{i}, \alpha_{j}, \alpha_{k+1}$ are linearly independent, by the classical Cartan lemma we have
 \begin{equation}
   \label{Cartaneq3}
   \beta_{i}\ \in\  \langle \alpha_{i}, \alpha_{j}, \alpha_{k+1}\rangle\,.
 \end{equation}

 If $k>2$, then for any $i\in \{1,\ldots, k\}$, as there is more than one choice of
 $j\in\{1,\ldots, k\}\smallsetminus \{i\}$ in \eqref{Cartaneq3}, we obtain that 
 \begin{equation}
  \label{Cartaneq4}
   \beta_{i}\ \in\  \langle \alpha_{i}, \alpha_{k+1}\rangle\,,
 \end{equation}
 which implies that $\rho \equiv 0\mod \alpha_{k+1}$.

 If $k=2$ then again by~\eqref{Cartaneq3} and~\eqref{Eq:isZero}, we have that 
 \[
   \beta_1\ =\ c\alpha_2 \mod \langle\alpha_1, \alpha_3\rangle\,,
   \qquad
   \beta_2\ =\  -c\alpha_1 \mod  \langle\alpha_2, \alpha_3\rangle\,,
 \]
 for some constant $c$, which completes the proof.
\end{proof}

With these lemmas in place, we give the proof of Theorem~\ref{mainm=3theor}.

\begin{proof}[Proof of Theorem~\ref{mainm=3theor}]
  For this proof, $Z\subset\bwedge^3V$ is a linear subspace of dimension six and $\PP Z$ is its image in $\PP{\bwedge^3}V$.
  By~\eqref{sup2}, to show that $Z$ is self-adjoint, we must produce a form $\sigma\in\bwedge^2V$ such that
  $Z=V\wedge\CC\sigma$.
  
 By Lemma~\ref{L>4}, the maps from $\PP Z$ to each of $\PP V$ and $\PP V^*$ are surjective.
Thus we may choose a basis $\{\omega_i\}_{i=1}^6$ for $Z$ whose images in each of  $\PP V$ and $\PP V^*$ are linearly
independent.
For each $i=1,\dotsc,6$, write $\omega_i=\alpha_i\wedge \sigma_i$, so that $\alpha_i$ is the image of $\omega_i$ in $\PP V$
and let $A_i$ be its image in $\PP V^*$.
Then $\{\alpha_i\mid i=1,\dotsc,6\}$ form a basis for $V$ and  $\{A_i\mid i=1,\dotsc,6\}$ form a basis for $V^*$.
These vectors $\omega_i$, $\sigma_i$, and $\alpha_i$ are only defined up to scalar multiples, so we may freely replace any
by a scalar multiple.

By Corollary~\ref{moveH}, no line $\langle \omega_i,\omega_j\rangle$ for $i\neq j$ has type (1) or (3),
as $\alpha_i$ and $\alpha_j$ are independent and $A_i\neq A_j$.
Therefore, they all have type (2).
By  Lemma~\ref{petro}(2), there exists $\sigma\in \bwedge^2V$ such that $\alpha_i\wedge\sigma_i=\alpha_i\wedge\sigma$ for
$i=1,2$.  
Applying Lemma \ref{petro}(2) to $\langle\omega_1,\omega_3\rangle$ and to $\langle\omega_2,\omega_3\rangle$,
after replacing $\sigma$ and $\sigma_3$ (and possibly $\alpha_1,\alpha_2,\alpha_3$) by scalar multiples,
 \[
   \sigma-\sigma_3\ \equiv\ 0 \ \mod\langle \alpha_i, \alpha_3\rangle\,,\ \mbox{\ for}\quad i=1,2\,.
 \]
By Lemma~\ref{Cartanlemma} for $\rho=\sigma-\sigma_3$ and $k=2$, we have 
\[
  \sigma-\sigma_3 \ \equiv\ c\alpha_1\wedge\alpha_2 \mod \langle\alpha_3\rangle
\]
for some constant $c$.
Consequently, there exists $\beta\in V$ such that 
\[
  \sigma-c\alpha_1\wedge\alpha_2\ =\ \sigma_3+\alpha_3\wedge\beta\,.
\]
Setting $\widetilde{\sigma}:=\sigma-c\alpha_1\wedge\alpha_2$ we get
\begin{equation}
 \label{sigmaremeq3}
 \alpha_i\wedge\sigma_i\ =\ \alpha_i\wedge\widetilde{\sigma}\,, \mbox{\ for} \quad i=1,2,3\,.
\end{equation}

Since the lines between $\omega_4=\alpha_4\wedge\sigma_4$ and $\omega_i$ for $i=1,2,3$ have type (2),
Lemma \ref{petro}(2) implies that after multiplying by scalars, we have
 \begin{equation}
  \label{L6eq5}
   \widetilde{\sigma} \ \equiv\ \sigma_4 \mod \langle \alpha_i,\alpha_4\rangle\,,\mbox{\ for} \quad i=1,2,3\,.
 \end{equation}
 Then, by Lemma~\ref{Cartanlemma} with $\rho=\widetilde{\sigma}-\sigma_4$ and $k=3$ we have
 \[
   \widetilde{\sigma}\ \equiv\ \sigma_4 \mod \alpha_4\,,
 \]
 which implies that in addition to \eqref{sigmaremeq3} we have 
 $\alpha_4\wedge\sigma_4=\alpha_4\wedge\widetilde{\sigma}$.
 The same arguments applied to $\alpha_5$ and $\alpha_6$ imply that  for all $1\leq i\leq 6$,
 we have $\omega_i=\alpha_i\wedge\widetilde{\sigma}$.
 As $\alpha_1,\dotsc,\alpha_6$ form a basis for $V$, we have that
 $Z=V\wedge \CC\widetilde{\sigma}$, which implies that it is self-adjoint,
 and completes the proof of Theorem~\ref{mainm=3theor}.
\end{proof}

\providecommand{\bysame}{\leavevmode\hbox to3em{\hrulefill}\thinspace}
\providecommand{\MR}{\relax\ifhmode\unskip\space\fi MR }
\providecommand{\MRhref}[2]{%
  \href{http://www.ams.org/mathscinet-getitem?mr=#1}{#2}
}
\providecommand{\href}[2]{#2}

\end{document}